\renewcommand{\thesubsection}{\thesection(\@roman\c@subsection)}
\newenvironment{NB}{
\color{red}{\bf NB}. \footnotesize
}{}
\newtheorem{Theorem}[equation]{Theorem}
\newtheorem{Corollary}[equation]{Corollary}
\newtheorem{Lemma}[equation]{Lemma}
\newtheorem{Proposition}[equation]{Proposition}
\theoremstyle{definition}
\newtheorem{Definition}[equation]{Definition}
\theoremstyle{remark}
\newtheorem{Remark}[equation]{Remark}
\numberwithin{equation}{section}
\newcommand{\thmref}[1]{Theorem~\ref{#1}}
\newcommand{\secref}[1]{\S\ref{#1}}
\newcommand{\lemref}[1]{Lemma~\ref{#1}}
\newcommand{\propref}[1]{Proposition~\ref{#1}}
\newcommand{\subsecref}[1]{\S\ref{#1}}
\newcommand{\remref}[1]{Remark~\ref{#1}}
\newcommand{\lsp}[2]{{\mskip-.3mu}{}^{#1}\mskip-1mu{#2}}
\newcommand{\defeq}{\overset{\operatorname{\scriptstyle def.}}{=}}
\newcommand{\C}{{\mathbb C}}
\newcommand{\Z}{{\mathbb Z}}
\newcommand{\R}{{\mathbb R}}
\newcommand{\proj}{{\mathbb P}}
\newcommand{\SU}{\operatorname{\rm SU}}
\newcommand{\GL}{\operatorname{GL}}
\newcommand{\algsl}{\operatorname{\mathfrak{sl}}} 
\newcommand{\gl}{\operatorname{\mathfrak{gl}}}
\newcommand{\End}{\operatorname{End}}
\newcommand{\Hom}{\operatorname{Hom}}
\newcommand{\Ext}{\operatorname{Ext}}
\newcommand{\Ker}{\operatorname{Ker}}
\newcommand{\Ima}{\operatorname{Im}}
\newcommand{\rank}{\operatorname{rank}}
\newcommand{\tr}{\operatorname{tr}}
\newcommand{\id}{\operatorname{id}}
\newcommand{\ve}{\varepsilon}
\newcommand{\gr}{\operatorname{gr}}
\newcommand{\dslash}{/\!\!/} 
\newcommand{\bv}{{\mathbf v}} 
\newcommand{\bw}{{\mathbf w}} 
\newcommand{\bM}{{\mathbf M}} 
\newcommand{\bL}{{\mathbf L}}
\newcommand{\bE}{{\mathbf E}}
\newcommand{\Q}{\mathscr Q}
\newcommand{\sO}{\mathscr O}
\newcommand{\reg}{{\operatorname{reg}}}
\newcommand{\shfO}{\mathcal O}
\newcommand{\hT}{\mathbb T}
\newcommand{\tI}{\widetilde I}
\newcommand{\ch}{\operatorname{ch}}
\newcommand{\tG}{\mathbb G}
\newcommand{\Pa}{\mathscr P}
\renewcommand{\MR}[1]{}
\begin{document}
\author{Hiraku Nakajima}
\title[Handsaw quiver varieties and finite $W$-algebras]
{Handsaw quiver varieties and finite $W$-algebras
}
\address{Research Institute for Mathematical Sciences,
Kyoto University, Kyoto 606-8502,
Japan}
\email{nakajima@kurims.kyoto-u.ac.jp}
\thanks{Supported by the Grant-in-aid
for Scientific Research (No.23340005), JSPS, Japan.
}
\subjclass[2000]{Primary 17B37;
Secondary 14D21
}
\begin{abstract}
  Following Braverman-Finkelberg-Feigin-Rybnikov (arXiv:1008.3655), we
  study the convolution algebra of a handsaw quiver variety, a.k.a.\ a
  parabolic Laumon space, and a finite $W$-algebra of type $A$. This
  is a finite analog of the AGT conjecture on $4$-dimensional
  supersymmetric Yang-Mills theory with surface operators. Our new
  observation is that the $\C^*$-fixed point set of a handsaw quiver
  variety is isomorphic to a graded quiver variety of type $A$, which
  was introduced by the author in connection with the representation
  theory of a quantum affine algebra. As an application, simple
  modules of the $W$-algebra are described in terms of $IC$ sheaves of
  graded quiver varieties of type $A$, which were known to be related
  to Kazhdan-Lusztig polynomials. This gives a new proof of a
  conjecture by Brundan-Kleshchev on composition multiplicities on
  Verma modules, which was proved by Losev, in a wider context, by a
  different method.
\end{abstract}

\maketitle
\tableofcontents

\section*{Introduction}

Alday, Gaiotto and Tachikawa proposed a conjecture relating an $N=2$
$4$-dimensional supersymmetric Yang-Mills theory and a $2$-dimensional
conformal field theory \cite{AGT}. This AGT conjecture predicts an
existence of an action of a Virasoro algebra on the direct sum of the
equivariant cohomology group of framed moduli spaces of
$SU(2)$-instantons on $\R^4$ with various instanton numbers.

Rigorously speaking, we need to use the Uhlenbeck partial
compactification and its equivariant intersection cohomology group,
but we will ignore this point in the introduction.

There are lots of variants of this conjecture.
When $SU(2)$ is replaced by a compact simple Lie group $K$, the
Virasoro algebra is replaced by the $W$-algebra associated with the
Langlands dual of the corresponding affine Lie algebra
$(((\operatorname{Lie}K)^\C)^{\mathrm{aff}})^L$.
If instantons have singularities along $x$-axis (with surface
operators in physics terminology), the $W$-algebra is replaced by its
variant associated with a nilpotent orbit in
$((\operatorname{Lie}K)^\C)^L$\footnote{This is only a rough idea, and
  a precise formulation of the conjecture is not understood yet even
  in physics literature. The author thanks Yuji Tachikawa for an
  explanation of this problem.}, describing the type of singularities.
A proof of this conjecture for $K = SU(n)$ without surface operators
will be given by Maulik-Okounkov \cite{MO}.

In a finite analog of the AGT conjecture with surface operators, one
considers spaces of based maps from $\proj^1$ to a partial flag variety
$G/P$ in the geometric side, while in the representation theory side
the finite $W$-algebra $W(\mathfrak g,e)$ associated with the
principal nilpotent element $e$ in the Lie algebra of the Levi
subgroup of $P$ appears.
This conjecture is proved recently by Braverman, Feigin, Finkelberg
and Rybnikov for $G = GL(N)$ with arbitrary $P$ \cite{BFFR}, using
Brundan-Kleshchev's presentation of the finite $W$-algebra via
generators and relations \cite{BruKle-alg}.

The AGT conjecture is close to the author's study of quiver varieties
and their relation to representation theory of Kac-Moody Lie algebras
\cite{Na-quiver,Na-alg}
and their quantum loop algebras \cite{Na-qaff}. In fact, quiver
varieties (of an affine type) are framed moduli spaces of
$\Gamma$-equivariant $U(N)$-instantons on $\R^4$, where $\Gamma$ is a
finite subgroup of $\SU(2)$. In this theory, $\Gamma$ cannot be the
trivial group, as their is no corresponding affine Kac-Moody Lie
algebra under the McKay correspondence. Hence it cannot be applied to
the setting of the AGT conjecture\footnote{%
  One exception is that it is known that the Heisenberg algebra acts
  on the cohomology of $U(1)$-instanton moduli spaces on $\R^4$ (=
  Hilbert schemes of points in $\C^2$) (see \cite{Lecture}). It
  suggests that the $W$-algebra for $\gl_1$ should be the Heisenberg
  algebra.}.

The space of based maps has a quiver description in type $A$, hence is
called a {\it handsaw quiver variety\/} by Finkelberg-Rybnikov
\cite{FR}. A trivial, but crucial new observation in this paper, is
that the $\C^*$-fixed point set of a handsaw quiver variety is
isomorphic to a graded quiver variety of type $A$, which is also the
$\C^*$-fixed point set of a quiver variety.
This observation is useful as we can apply various results in graded
quiver varieties to handsaw quiver varieties.

As an example of such applications, we give a formula of the
composition multiplicity of a simple module in a Verma module of the
finite $W$-algebra of type $A$ in terms of intersection cohomology
groups of graded quiver varieties.
On the other hand, the same intersection cohomology groups control the
composition multiplicity of representations of quantum affine
algebras of type $A$ \cite{Na-qaff} and Yangians \cite{Varagnolo}.
Then this composition multiplicity is equal to a certain
Kazhdan-Lusztig polynomial evaluated at $1$ thanks to Arakawa's result
\cite{Arakawa}. This gives a new proof of a conjecture by
Brundan-Kleshchev \cite[Conj.~7.17]{BruKle}, which was proved earlier
by Losev \cite[Th.~4.1, Th.~4.3]{Losev}, in a wider context, by a
different method.

Other applications, such as a handsaw version of a geometric
realization of tensor products \cite{Na-tensor}, which gives the Miura
transform of a finite $W$-algebra, generalizations to chainsaw quiver
varieties, will be discussed in a subsequent publication.

The paper is organized as follows: in \secref{sec:Walg} we recall
Brundan-Kleshchev's results on the representation theory of finite
$W$-algebras. In \secref{sec:handsaw} we introduce handsaw quiver
varieties and explain their basic properties. In \secref{sec:Laumon}
we give a self-contained proof of the fact that handsaw quiver
varieties are isomorphic to the parabolic Laumon spaces. In
\secref{sec:fixed} we study torus fixed points in handsaw quiver
varieties and compute Betti numbers as an application. In
\secref{sec:Hecke} we introduce a subvariety in the product of two
handsaw quiver varieties, which we call a Hecke correspondence. It
will give generators of the finite $W$-algebra.
In \secref{sec:conv} we introduce a Steinberg type variety, and define
its convolution algebra. We modify the construction in \cite{BFFR} to
define a homomorphism from the finite $W$-algebra to the convolution
algebra.
In \secref{sec:std} we identify standard modules of the convolution
algebra with Verma modules of the finite $W$-algebra. In the course of
the proof, we give a geometric interpretation of the Gelfand-Tsetlin
character of standard modules in the same spirit as a geometric
interpretation of $q$--characters via graded quiver varieties.
In \secref{sec:simple} we give the composition multiplicity formula in
terms of intersection cohomology sheaves of graded quiver varieties.

\subsection*{Notation}

For a linear map $\alpha\colon V\to W$, its transpose $W^*\to V^*$ is
denoted by $\lsp{t}\alpha$.

Let $X$ be a variety endowed with an action of a connected reductive
group $G$. The equivariant Borel-Moore homology group of $X$ is
denoted by
\begin{equation*}
   H^G_*(X),
\end{equation*}
where the coefficient field is $\C$. It is a module over the
equivariant cohomology $H^*_G(\mathrm{pt})$ of a single point, which
is isomorphic to the algebra of $G$-invariant polynomial functions on
the Lie algebra $\mathfrak g$ of $G$. We denote $H^*_G(\mathrm{pt})$
by $S(G)$, and its field of fractions by $\mathcal S(G)$.

The character of $\C^*$ given by the identity map is denoted by
$q$. We usually omit the symbol $\otimes$ for a tensor product of $q$
and a representation or an equivariant vector bundle.

\section{Shifted Yangians and finite $W$-algebras}\label{sec:Walg}

We fix notation on shifted Yangian and finite $W$-algebras following
\cite{BruKle} in this section.
We slightly change terminologies following \cite{Na-qaff} so that
analogy with quantum affine algebras will be apparent.

\subsection{Pyramid}
We fix a {\it pyramid $\pi$ of level $l$\/}, that is, a sequence of
positive integers $q_1,\dots,q_l$ such that
\begin{equation*}
   q_1 \le\cdots\le q_k,\quad q_{k+1} \ge \cdots \ge q_l
\end{equation*}
for some fixed integer $0\le k\le l$. We set $n = \max (q_1,\dots
q_l)$. We define a sequence $0\le p_1\le p_2\le \cdots \le p_n = l$ so
that $p_{i}$ is the number of entries $q_c$ with $q_c \ge
n-i+1$.

We draw the pyramid as a diagram consisting of $q_i$ boxes in the
$i^{\mathrm{th}}$ column. See Figure~\ref{fig:pyramid}. Then $p_i$ is
the $i^{\mathrm{th}}$ {\it row length\/}, that is, the number of boxes
in the $i^{\mathrm{th}}$ row.

\begin{figure}[htbp]
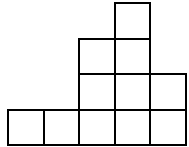

  \centering
  \begin{equation*}
    \young(:::\ :,::\ \ :,::\ \ \ ,\ \ \ \ \ )
  \end{equation*}
  \caption{Pyramid for $\pi=(1,1,3,4,2)$}
  \label{fig:pyramid}
\end{figure}

Let $N$ be the total number of boxes, i.e., $N = \sum p_i = \sum q_j$.

For $0 < i < n$ we set
\begin{equation*}
  \begin{split}
  s_{i+1,i} & \defeq \# \{ c = 1,\dots, k \mid n - q_c = i\},
\\
  s_{i,i+1} & \defeq \# \{ c = k+1, \dots, l \mid n - q_c = i\}.
  \end{split}
\end{equation*}
\begin{NB}
  Therefore we have
  \begin{equation*}
    s_{i+1,i} + s_{i,i+1} = p_{i+1} - p_i.
  \end{equation*}
  Note that $p_{i+1} - p_i$ is the difference of numbers of boxes in
  the $(i+1)^{\mathrm{th}}$ row and $i^{\mathrm{th}}$ row. We divide
  the pyramid into two sides, the left side is the $1^{\mathrm{st}}$,
  \dots, $k^{\mathrm{th}}$ rows and the right side is
  $(k+1)^{\mathrm{th}}$, \dots, $n^{\mathrm{th}}$ rows. Then
  $s_{i+1,i}$ is the number of boxes in the left side, and
  $s_{i,i+1}$ is in the right side.
\end{NB}%
We then define the {\it shift matrix\/} $\sigma = (s_{ij})_{1\le i,j\le
  n}$ associated with $\pi$ so that $s_{ii} = 0$ and
\begin{equation*}
  s_{ij} + s_{jk} = s_{ik}
\end{equation*}
whenever $|i-j| + |j-k| = |i-k|$.
\begin{NB}
We fix a positive integer $n$ and a {\it shift matrix\/} $\sigma =
(s_{ij})_{1\le i,j\le n}$, that is a matrix of nonnegative integers
such that
\begin{equation*}
  s_{ij} + s_{jk} = s_{ik}
\end{equation*}
whenever $|i-j| + |j-k| = |i-k|$.
\end{NB}

Note that the pyramid is recovered from the shift matrix $\sigma$ and
the level $l = p_n$.

\subsection{Generators and relations}\label{subsec:rel}

Suppose that a shift matrix $\sigma = (s_{ij})$ is given.
Let $I = \{ 1, 2, \dots, n-1\}$, $\tI = I\sqcup \{n\}$.
The {\it shifted Yangian\/} $Y_n(\sigma)$ is a $\C$-graded algebra
with generators $D_i^{(r)}$ ($i\in \tI$, $r > 0$), $E_i^{(r)}$ ($i\in
I$, $r > s_{i,i+1}$), $F_i^{(r)}$ ($i\in I$, $r > s_{i+1,i}$),
subjected to the following relations:
{\allowdisplaybreaks
\begin{gather}
  [D_i^{(r)}, D_j^{(s)}] = 0,
\\
  [E_i^{(r)}, F_j^{(s)}] = \delta_{ij} \sum_{t=0}^{r+s-1}
  \widetilde D_i^{(t)} D_{i+1}^{(r+s-1-t)},
\\
  [D_i^{(r)}, E_j^{(s)}] = (\delta_{ij} - \delta_{i,j+1})
  \sum_{t=0}^{r-1} D_i^{(t)} E_j^{(r+s-1-t)},
\\
  [D_i^{(r)}, F_j^{(s)}] = (\delta_{i,j+1}-\delta_{ij})
  \sum_{t=0}^{r-1} F_j^{(r+s-1-t)} D_i^{(t)},
\\
  [E_i^{(r)}, E_i^{(s+1)}] - [E_i^{(r+1)},E_i^{(s)}] = 
   E_i^{(r)} E_i^{(s)} + E_i^{(s)} E_i^{(r)},
\\
  [F_i^{(r+1)},F_i^{(s)}] - [F_i^{(r)}, F_i^{(s+1)}] = 
   F_i^{(r)} F_i^{(s)} + F_i^{(s)} F_i^{(r)},
\\
  [E_i^{(r)}, E_{i+1}^{(s+1)}] - [E_i^{(r+1)}, E_{i+1}^{(s)}] = 
  - E_i^{(r)} E_{i+1}^{(s)},
\\
  [F_i^{(r+1)}, F_{i+1}^{(s)}] - [F_i^{(r)}, F_{i+1}^{(s+1)}] = 
  - F_{i+1}^{(s)}F_i^{(r)},
\\
  [E_i^{(r)}, E_j^{(s)}] = 0 \quad\text{if $|i-j| > 1$},
\\
  [F_i^{(r)}, F_j^{(s)}] = 0 \quad\text{if $|i-j| > 1$},
\\
  [E_i^{(r)}, [E_{i}^{(s)}, E_j^{(t)}]] 
  + [E_i^{(s)}, [E_i^{(r)}, E_j^{(t)}]] = 0 \quad\text{if $|i-j|=1$},
\\
  [F_i^{(r)}, [F_{i}^{(s)}, F_j^{(t)}]] 
  + [F_i^{(s)}, [F_i^{(r)}, F_j^{(t)}]] = 0 \quad\text{if $|i-j|=1$},
\end{gather}
where} $D_i^{(0)} = 1$ and $\widetilde D_i^{(r)}$ is determined
recursively by the relation
\(
   \sum_{t=0}^r D_i^{(t)} \widetilde D_i^{(r-t)} = \delta_{r0}.
\)

It is known that two shifted Yangians $Y_n(\sigma)$, $Y_n(\dot\sigma)$
are isomorphic under an explicit homomorphism \cite[\S2.3]{BruKle} when
$\sigma$ and $\dot\sigma$ satisfy $s_{i,i+1} + s_{i+1,i} = \dot
s_{i,i+1} + \dot s_{i+1,i}$. This condition is satisfied when $\sigma$
and $\dot\sigma$ come from pyramids $\pi$, $\dot\pi$ which have the
same row lengths.
\begin{NB}
  \begin{equation*}
    \iota (E_i^{(r)}) = (-1)^{s_{i,i+1}-\dot s_{i,i+1}}
    E_i^{(r-s_{i,i+1}+\dot s_{i,i+1})},\qquad
    \iota (F_i^{(r)}) = (-1)^{s_{i+1,i}-\dot s_{i+1,i}}
    E_i^{(r-s_{i+1,i}+\dot s_{i+1,i})}.
  \end{equation*}
\end{NB}

\subsection{Rees algebra}

Let us define {\it higher root\/} elements inductively by
\begin{alignat*}{2}
    E_{i,i+1}^{(r)} & \defeq E_i^{(r)}, & \quad
    E_{i,j}^{(r)} & \defeq [E_{i,j-1}^{(r-s_{j-1,j})},E_{j-1}^{(s_{j-1,j}+1)}],
\\
    F_{i,i+1}^{(r)} & \defeq F_i^{(r)}, & \quad
    F_{i,j}^{(r)} & \defeq [F_{j-1}^{(s_{j,j-1}+1)},F_{i,j-1}^{(r-s_{j,j-1})}].
\end{alignat*}

We introduce a filtration $F_0 Y_n(\sigma) \subset F_1
Y_n(\sigma)\subset \cdots$ on $Y_n(\sigma)$ so that $F_d Y_n(\sigma)$
is the span of all monomials in $F_{i,j}^{(r)}$, $F_{i,j}^{(r)}$,
$D_i^{(r)}$ of degree $\le d$, where we assign degree $r$ for all
$F_{i,j}^{(r)}$, $F_{i,j}^{(r)}$, $D_i^{(r)}$.
Then $Y_n(\sigma)$ is a filtered algebra.

Let $Y^\hbar_n(\sigma)$ be the associated Rees algebra, i.e.,
\begin{equation*}
  Y^\hbar_n(\sigma)\defeq \bigoplus_{d\ge 0} \hbar^d F_d Y_n(\sigma).
\end{equation*}
It is a graded algebra over $\C[\hbar]$.
\begin{NB}
For $a\in F_d Y_n(\sigma)$, we define
\(
  \hbar (\hbar^d a) = \hbar^{d+1} a
  \in \hbar^{d+1} F_{d+1} Y_n(\sigma).
\)
\end{NB}%
We denote $\hbar^r E_i^{(r)}\in \hbar^r F_r Y_n(\sigma)$, considered
as an element in $Y_n^{\hbar}(\sigma)$, by $E_i^{(r)}$.

The higher root vector in $Y_n^\hbar(\sigma)$ is given by
\begin{equation}\label{eq:root}
  \hbar E_{i,j}^{(r)} = [E_{i,j-1}^{(r-s_{j-1,j})},E_{j-1}^{(s_{j-1,j}+1)}]
\end{equation}
in $Y^\hbar_n(\sigma)$.

\begin{NB}
  In \cite{BFFR}, $Y^\hbar_n(\sigma)$ is defined by generators and
  relations. But we must be careful whether
  \begin{equation*}
    E_{i,i+2}^{(r)} = \frac1\hbar [E_i^{(r-s_{i+1,i+2})},E_{i+1}^{(s_{i+1,i+2}+1)}]
  \end{equation*}
is contained in a convolution algebra or not.
\end{NB}

The specialization $Y^\hbar_n(\sigma)/\hbar Y^\hbar_n(\sigma) =
Y^\hbar_n(\sigma)\otimes_{\C[\hbar]} \C$ at $\hbar = 0$ is the graded
algebra $\operatorname{gr} Y_n(\sigma) = \bigoplus_{d \ge 0} F_d
Y_n(\sigma)/F_{d-1} Y_n(\sigma)$, where we set $F_{-1} Y_n(\sigma) =
0$.
It is known that $\gr Y_n(\sigma)$ is the free commutative algebra on
generators $\gr_r E_{i,j}^{(r)}$ ($s_{i,j} < r$), $\gr_r F_{i,j}^{(r)}$
($s_{j,i} < r$), $\gr_r D_i^{(r)}$ ($0 < r$) (\cite[Th.~5.2]{BruKle-alg}).

\begin{Remark}
  In \cite{BFFR} the Rees algebra with respect to the {\it shifted\/}
  Kazhdan filtration $F'_d Y_n(\sigma) \defeq F_{d+1} Y_n(\sigma)$ is
  considered. However it seems to the author that one of the defining
  relations \cite[(3.2)]{BFFR} is not compatible with this grading.
  \begin{NB}
    It is true that they get the Rees algebra for the shifted Kazhdan
    filtration, but the defining relation was wrong.
  \end{NB}
\end{Remark}

\subsection{Finite $W$-algebra}

By \cite{BruKle-alg} the finite $W$-algebra $W(\gl_N,e)$ associated with a
nilpotent matrix $e$ whose Jordan type is given by the partition $(p_n,\dots,p_1)$ is isomorphic to the quotient of
$Y_n(\sigma)$ divided by the two sided ideal generated by 
\(
   \{ D_1^{(r)} \mid r > p_1 \}.
\)
Following \cite{BruKle} we denote this quotient by $W(\pi)$, where $\pi$
is the pyramid corresponding to $\sigma$ and $p_1$ (or $p_n$).

The isomorphism $Y_n(\sigma)\cong Y_n(\dot\sigma)$ mentioned above
descends to quotients, so the $W$-algebra depends only on the
conjugacy class of $e$ up to an isomorphism.

The filtration $F_\bullet Y_n(\sigma)$ induces a filtration on
$W(\pi)$, which is the same as the Kazhdan filtration
\cite{BruKle-alg}. We denote by $W^\hbar(\pi)$ the corresponding Rees
algebra.

It is known that $W^\hbar(\pi)/\hbar W^\hbar(\pi)$ is isomorphic to
the polynomial algebra in variables $\gr_r E_{i,j}^{(r)}$ ($s_{i,j}<r
\le S_{ij}$), $\gr_r F_{i,j}^{(r)}$ ($s_{j,i} < r \le S_{j,i}$),
$\gr_r D_i^{(r)}$ ($s_{i,i} < r \le S_{i,i}$) where $S_{i,j} = s_{i,j}
+ p_{\min(i,j)}$ (\cite[Th.~6.2]{BruKle-alg}). It is further isomorphic to
the coordinate ring of the Slodowy slice to the nilpotent orbit
through $e$.

When $e=0$ (i.e., $p_1=\cdots=p_n=1$), it is known that $W(\gl_N,e)$ is the universal enveloping algebra $U(\gl_N)$ of $\gl_N$. On the other hand, if $e$ is regular nilpotent (i.e., $p_1=\cdots=p_{n-1} =0$, $p_n = N$), $W(\gl_N,e)$ is the center of $U(\gl_N)$.

\subsection{Center of $W(\pi)$}

Let
\begin{equation}\label{eq:genD}
  \begin{split}
  D_i(u) & \defeq \sum_{r=0}^\infty D_i^{(r)} u^{-r},
\\
  C_i(u) & \equiv \sum_{r=0}^\infty C_i^{(r)} u^{-r} \defeq 
  D_1(u) D_2(u-1) \cdots D_i(u-i+1),
\\
  Z_N(u) & \defeq u^{p_1} (u-1)^{p_2} \cdots (u - n+1)^{p_n} C_n(u).
  \end{split}
\end{equation}
We follow the notation in \cite{BruKle}, and the above are denoted by
${\mathsf d}_i(u)$, ${\mathsf a}_i(u)$, ${\mathsf A}_n(u)$
respectively in \cite{BFFR}.
\begin{NB}
\begin{gather*}
  {\mathsf d}_i(u) \defeq \sum_{r\ge 0} d_i^{(r)} u^{-r}, \quad
  {\mathsf a}_i(u) \defeq {\mathsf d}_1(u) {\mathsf d}_2(u-\hbar)
  \cdots {\mathsf d}_i(u-(i-1)\hbar),
\\
  {\mathsf A}_i(u) \defeq u^{p_1} (u-\hbar)^{p_2}\cdots
  (u-(i-1)\hbar)^{p_i} {\mathsf a}_i(u).
\end{gather*}
\end{NB}%

It was shown that the elements $C_n^{(1)}$, $C_n^{(2)}$, \dots\ are
algebraically independent and generate the center $Z(Y_n(\sigma))$ of
the shifted Yangian $Y_n(\sigma)$ \cite[Th.~2.6]{BruKle}.

It was also shown that $Z_N(u)$ is a polynomial of degree $N = p_1 +
\cdots + p_n$ in $W(\pi)$, and the center $Z(W(\pi))$ of $W(\pi)$ is a
polynomial algebra in generators $Z_N^{(1)}$, \dots, $Z_N^{(N)}$, the
coefficients of $Z_N(u)$ \cite[Th.~6.10]{BruKle}. (See also
\cite[Lem.~3.7]{BruKle} to identify $Z_N(u)$ with the above definition).

\subsection{Admissible modules and $\ell$-weight
  spaces}\label{subsec:adm}

Let $\mathfrak c$ be the abelian Lie algebra generated by $D_1^{(1)}$,
\dots, $D_n^{(1)}$. Let $\ve_1$, \dots, $\ve_n$ be the basis of
$\mathfrak c^*$ dual to $D_1^{(1)}$, \dots, $D_n^{(1)}$. Let $\le$ be
the dominance order on $\mathfrak c^*$ defined by
\begin{equation*}
  \alpha\ge \beta \Longleftrightarrow 
  \alpha - \beta \in \sum_i \Z_{\ge 0} (\ve_i - \ve_{i+1}).
\end{equation*}

A $Y_n(\sigma)$-module $M$ is said to be {\it admissible\/} if we have
a generalized weight space decomposition $M =
\bigoplus_{\alpha\in\mathfrak c^*} M_\alpha$, where
\begin{equation*}
  M_\alpha = \left\{ m\in M \,\left|
  \text{ $(D_i^{(1)} - \langle \alpha, D_i^{(1)}\rangle)^N m = 0$
  for $i\in \tI$ and sufficiently large $N$}\right\}\right.
\end{equation*}
satisfying the conditions 
\begin{enumerate}
\item each $M_\alpha$ is finite-dimensional,
\item the set of all $\alpha\in\mathfrak c^*$ such that $M_\alpha\neq
  0$ is contained in a finite union of sets of the form $D(\beta) = \{
  \alpha\in\mathfrak c^* \mid \alpha\le \beta\}$ for
  $\beta\in\mathfrak c^*$.
\end{enumerate}

Since $D_i^{(r)}$ ($i\in\tI$, $r > 0$) form a commutative subalgebra
of $Y_n(\sigma)$, an admissible module $M$ has a finer decomposition
\(
   M = \bigoplus M_{\Psi}
\)
where
\begin{equation*}
  \begin{split}
  & \Psi = (\Psi_1(u),\dots, \Psi_n(u)), \quad
  \Psi_i(u) = 1 + \sum_{r=1}^\infty \Psi_i^{(r)} u^{-r}
  \in 1 + u^{-1}\C[[u^{-1}]],
\\
  & M_{\Psi} = \left\{ m\in M \,\left|
      \text{ $(D_i^{(r)} - \Psi_i^{(r)})^N m = 0$
        for $i\in \tI$, $r > 0$ and sufficiently large $N$}\right\}\right..
  \end{split}
\end{equation*}
We call $M_{\Psi}$ the {\it $\ell$-weight space\/} following
\cite[\S1.3]{Na-qaff} (or the Gelfand-Tsetlin subspace following
\cite[\S5]{BruKle}) with the $\ell$-weight $\Psi$.

It turns out that it is more appropriate to consider another sequence
of generators instead of $D_i^{(r)}$ for the $W$-algebra. We introduce
\begin{equation}\label{eq:sfA}
  \mathsf A_i(u) \defeq u^{p_1} (u-1)^{p_2}\cdots
  (u-(i-1))^{p_i} C_i(u)
\end{equation}
following \cite{FMO,BFFR}. Then it is a polynomial in $u$ in $W(\pi)$
\cite[Lem.~2.1]{FMO}. Instead of $D_i^{(r)}$, we can define an
$\ell$-weight space as a simultaneous generalized eigenspace for
operators $\mathsf A_i(u)$ and the corresponding $\ell$-weight as
an $\tI$-tuple of rational functions 
\(
   {Q} = (Q_1(u),Q_2(u),\dots, Q_n(u))
\)
as generalized eigenvalues of $\mathsf A_i(u)/\mathsf A_{i-1}(u)$
($i\in \tI$), where $\mathsf A_{-1}(u) = 1$.

Following \cite[Def.~2.8]{MR2144973} we say an $\ell$-weight ${Q}$ is
{\it $\ell$-dominant\/} if every $Q_i(u)$ is a polynomial.

\subsection{Gelfand-Tsetlin character}

The generating function of dimensions of $\ell$-weight spaces of an
admissible $Y_n(\sigma)$-module $M$ is called the {\it Gelfand-Tsetlin
  character\/} after \cite[\S5.2]{BruKle}:
\begin{equation*}
  \ch M \defeq \sum_{{\Psi}} \dim M_{{\Psi}}\, [{\Psi}].
\end{equation*}
This is an element of the completed group algebra, consisting of
formal sums satisfying the conditions corresponding to the
admissibility. See \cite[\S5.2]{BruKle} for detail.

For a $W(\pi)$-module $M$, we use the second convention for
$\ell$-weights and write
\begin{equation*}
  \ch M = \sum_{{Q}} \dim M_{{Q}}\, [{Q}].
\end{equation*}

This definition was motivated by $q$--characters of finite dimensional
representations of quantum affine algebras, introduced in
\cite{Knight,Fre-Res}, and further studied in
\cite[\S13.5]{Na-qaff}. Later in \subsecref{subsec:gr} we will show
that this is not just an analogy: both Gelfand-Tsetlin and
$q$--characters are expressed in terms of a common geometric object, a
graded quiver variety.

\subsection{$\ell$-highest weight module}\label{subsec:hw}

A vector $v$ in a $Y_n(\sigma)$-module $M$ is called an {\it
  $\ell$-highest weight vector\/} with $\ell$-highest weight
${\Psi}$ if
\begin{enumerate}
\item $E_i^{(r)} v = 0$ for $i\in I$, $r > s_{i,i+1}$,
\item $D_i^{(r)} v = \Psi_i^{(r)} v$ for $i\in\tI$, $r > 0$, 
\end{enumerate}
where $\Psi_i^{(r)}$ is a coefficient of $\Psi_i(u)$ as above.

An {\it $\ell$-highest weight module\/} is a $Y_n(\sigma)$-module $M$
generated by an $\ell$-highest weight vector.

From the triangular decomposition of $Y_n(\sigma)$, we can construct
the universal $\ell$-highest weight module $M(\sigma,{\Psi})$
with $\ell$-highest weight ${\Psi}$ \cite[\S5]{BruKle}.

We consider the quotient
\begin{equation*}
   W(\pi)\otimes_{Y_n(\sigma)} M(\sigma,{\Psi}).
\end{equation*}
By \cite[Th.~6.1]{BruKle}, it is nonzero if and only if $u^{p_i}
\Psi_i(u)\in \C[u]$ for $i\in\tI$. When it is nonzero, we call it a
{\it Verma module}. We set $P_i(u) = (u-i+1)^{p_i} \Psi_i(u-i+1)$ and
call ${P} = (P_1(u),\dots,P_n(u))$ the {\it Drinfeld polynomial\/}
of the Verma module $M({P}) \defeq W(\pi)\otimes_{Y_n(\sigma)}
M(\sigma,{\Psi})$. It has the unique simple quotient, which is
denoted by $L({P})$. It is known that both $M({P})$ and
$L({P})$ are admissible \cite[\S6.1]{BruKle}.

This convention is compatible with our definition of
$\ell$-weights. The $\ell$-highest weight vector $v$ as above has the
$\ell$-weight ${P}$ in the above second sense. Note that it is
$\ell$-dominant since every $P_i(u)$ is a polynomial.

\begin{NB}
If $v$ is an $\ell$-highest weight vector of a $W(\sigma)$-module, there
exists polynomials $P_i(u)\in \C[u]$ of degree $p_i$ such that
\begin{equation*}
  \begin{split}
    D_1(u) v &= u^{-\deg P_1} P_1(u) v,\\
    D_2(u) v &= u^{-\deg P_2} P_2(u+1) v,\\
    & \ \  \vdots \\
    D_n(u) v &= u^{-\deg P_n} P_n(u+n-1) v. 
  \end{split}
\end{equation*}
We call the $\tI$-tuple of polynomials $\vec{P}(u) =
(P_i(u))_{i\in\tI}$ an {\it $\ell$-highest weight vector}. This is the
same convention as \cite[\S6.1]{BruKle}.
\end{NB}

\section{Handsaw quiver varieties}\label{sec:handsaw}

In this section, we introduce handsaw quiver varieties following
\cite{FR}, and explain various their properties. Handsaw quiver
varieties are isomorphic to Laumon spaces (see \secref{sec:Laumon}),
and most of results are known in the context of Laumon spaces. But we
present them in terms of handsaw quivers so that they can be used in
later sections. Proofs are usually the same as ones of {\it
  ordinary\/} quiver varieties and are not given. The readers are
supposed to be familiar with \cite[\S3]{Na-alg}.

\subsection{Definition}\label{subsec:def}

Let $n\in \Z_{>0}$ be as in the previous section.
Recall $I = \{ 1, 2, \dots, n-1\}$, $\tI = I\sqcup \{n\}$.

We take $I$ and $\tI$-graded vector spaces $V = \bigoplus_{i\in I}
V_i$, $W = \bigoplus_{i\in \tI} W_i$ and consider the representation
of the following handsaw quiver
\begin{equation}\label{eq:handsaw}
  \xymatrix{
V_1 \ar@(ur,ul)_{B_2} \ar[r]^{B_1} \ar[dr]_{b}
& V_2 \ar@(ur,ul)_{B_2} \ar[dr]_{b} \ar[r]^{B_1} && \cdots 
& \ar[dr]_b \ar[r]^{B_1} & V_{n-1} \ar@(ur,ul)_{B_2} \ar[dr]_b \\
W_1 \ar[u]^a & W_2 \ar[u]^a && \cdots && W_{n-1}\ar[u]^a & W_n}
\end{equation}
where
\begin{gather*}
   B_1 \in \bigoplus_{i=1}^{n-2} \Hom(V_i,V_{i+1}), \quad
   B_2 \in \bigoplus_{i=1}^{n-1} \End(V_i), \\
   a\in\bigoplus_{i=1}^{n-1} \Hom(W_i,V_i), \quad
   b\in\bigoplus_{i=1}^{n-1} \Hom(V_i,W_{i+1}).
\end{gather*}
We denote by $\bM$ the vector space of all quadruples $(B_1,B_2,a,b)$
as above. When we want to emphasize graded dimensions of $V$, $W$, we
denote it by $\bM(\bv,\bw)$, where $\bv\in\Z[I]$, $\bw\in\Z[\tI]$ are
{\it dimension vectors\/} given by $\bv = \sum_{i\in I} \dim V_i\,
\alpha_i$, $\bw = \sum_{i\in\tI} \dim W_i\, \alpha_i$, where
$\alpha_i$ is the $i^{\mathrm{th}}$ coordinate vector of $\Z[I]$ or
$\Z[\tI]$.

In a later relation to the finite $W$-algebra $W(\pi)$, we have
\begin{equation*}
  \dim W_i = p_i
\end{equation*}

We introduce the following notation:
\begin{equation*}
  \bL(W,V) \defeq \bigoplus_i \Hom(W_i,V_i), \quad
  \bE(V,W) \defeq \bigoplus_i \Hom(V_i,W_{i+1}).
\end{equation*}
We also write $\bigoplus_i \Hom(V_i, V_{i+1})$ and
$\bigoplus_i \End(V_i)$ as $\bL(V,V)$, $\bE(V,V)$ respectively by
setting $V_n = 0$.

We define 
\begin{equation*}
   \mu(B_1,B_2,a,b) = [B_1,B_2]+ab \in \bE(V,V),
\end{equation*}
and consider an affine variety $\mu^{-1}(0)$. It is acted by the group
$G = \prod_{i\in I} \GL(V_i)$.

In order to consider quotients of $\mu^{-1}(0)$ by $G$, we introduce
the following conditions coming from the geometric invariant theory
(cf.\ \cite[\S3.ii]{Na-alg}):
\begin{Definition}
  \textup{(1)} A point $(B_1,B_2,a,b)\in\bM$ is called {\it stable\/}
  if there is no proper $I$-graded subspace $S = \bigoplus S_i$ of $V$
  stable under $B_1$, $B_2$ and containing $a(W)$.

  \textup{(2)} A point $(B_1,B_2,a,b)\in\bM$ is called {\it
    costable\/} if there is no nonzero $I$-graded subspace $S$ of $V$
  stable under $B_1$, $B_2$ and contained in $\Ker b$.
\end{Definition}

These conditions are invariant under the $G$-action. We set
\begin{equation*}
  \begin{split}
    & \Q \equiv \Q(\bv,\bw) \defeq \{ (B_1,B_2,a,b)\in\mu^{-1}(0)\mid
    \text{stable}\}/G,
    \\
    & \Q_0 \equiv \Q_0(\bv,\bw) \defeq \mu^{-1}(0)\dslash G,
    \\
    & \Q_0^{\reg}\equiv \Q_0^{\reg}(\bv,\bw) \defeq \{
    (B_1,B_2,a,b)\in\mu^{-1}(0)\mid \text{stable and
      costable}\}/G,
  \end{split}
\end{equation*}
where $\dslash$ denote the affine quotient. When we want to emphasize
$\bv$, $\bw$ as we will do later, we use the notation $\Q(\bv,\bw)$,
and $\Q$ otherwise.
These are called {\it handsaw quiver varieties}.

For a stable $(B_1,B_2,a,b)\in\mu^{-1}(0)$, the corresponding point in
$\Q$ is denoted by $[B_1,B_2,a,b]$.
A (closed) point in $\Q_0(\bv,\bw)$ is represented by a closed
$G$-orbit in $\mu^{-1}(0)$. When $(B_1,B_2,a,b)$ has a closed orbit,
the corresponding point in $\Q_0$ is denoted by $[B_1,B_2,a,b]$.

We have a projective morphism $\pi\colon \Q\to \Q_0$. The second space
$\Q_0^\reg$ is a (possibly empty) open subscheme in both $\Q$ and $\Q_0$
so that $\pi$ is an isomorphism between them.

We denote $\pi^{-1}(0)$ by $\sO$, where $0$ is the closed orbit
consisting of $(B_1,B_2,a,b) = (0,0,0,0)$. This is a projective
variety, and will played an important role later.

We have an action of $\C$ on $\bM$ by the translation of $B_2$:
$B_2\mapsto B_2 - \sum z \id_{V_i}$ ($z\in\C$). It preserves the
equation $\mu = 0$ and commutes with the $G$-action. Therefore we have
an induced $\C$-action on $\Q$, $\Q_0$, $\Q_0^\reg$. We can normalize
a point so that $\tr B_2 = 0$. Therefore we have a decomposition $\Q =
\Q^0\times\C$, etc, where $\Q^0$ is the space of normalized points.

\subsection{$\C^*$-equivariant sheaves on the projective plane}
\label{subsec:Jordan}

In this subsection, we describe $\Q$ as a fixed point locus of the
framed moduli space of torsion free sheaves $(E,\varphi)$ on $\proj^2$
of rank $r$ with $c_2(E) = n$ with respect to a $\C^*$-action, in
other words an ordinary quiver variety of Jordan type
\cite[Ch.~2]{Lecture}. The quiver variety in question corresponds to
the original ADHM description of instantons on $S^4$ \cite{ADHM}.

Let $V$, $W$ be complex vector spaces of dimension $n$, $r$
respectively.  Let
\begin{equation*}
  \widetilde\bM \defeq \End(V)\oplus \End(V) \oplus \Hom(W,V)\oplus \Hom(V,W).
\end{equation*}
An element in $\widetilde\bM$ is a quadruple $(B_1,B_2,a,b)$ according
to the above decomposition. The group $\GL(V)$ acts naturally on
$\widetilde\bM$.
We define $\mu(B_1,B_2,a,b) = [B_1,B_2]+ab$ as before.

We define the stability and costability conditions exactly as above,
where $S\subset V$ is an arbitrary subspace, not necessarily graded.

These conditions are invariant under the $\GL(V)$-action. We set
\begin{equation*}
  \begin{split}
  & M(n,r) \defeq \{ (B_1,B_2,a,b)\in\mu^{-1}(0)\mid \text{stable}\}/\!\GL(V),
\\
  & M_0(n,r) \defeq \mu^{-1}(0)\dslash\! \GL(V),
\\
  & M_0^{\reg}(n,r) 
  \defeq \{ (B_1,B_2,a,b)\in\mu^{-1}(0)\mid
  \text{stable and costable}\}/\!\GL(V).
  \end{split}
\end{equation*}

The framed moduli space of locally free sheaves is $M^{\reg}(n,r)$,
and $M(n,r)$, $M_0(n,r)$ are its Gieseker and Uhlenbeck partial
compactification respectively.

We choose and fix a homomorphism $\rho_W\colon \C^* \to \GL(W)$.
We define a $\C^*$-action on $\widetilde\bM$ by
\begin{equation}\label{eq:action}
  (B_1,B_2,a,b) \mapsto
  (t_1 B_1,B_2, a\rho_W(t_1)^{-1} , t_1\rho_W(t_1)b) \quad t_1\in\C^*.
\end{equation}
The equation $\mu(B_1,B_2,a,b) = 0$ and the (co)stability are
preserved under the $\C^*$-action, and hence we have an induced
$\C^*$-actions on $M(n,r)$, $M_0(n,r)$, $M^{\reg}(n,r)$.

Take a point $x\in M(n,r)$ and its representative
$(B_1,B_2,a,b)$. Then $x$ is fixed by the $\C^*$-action if and only if
there exists a $\rho(t_1)\in \GL(V)$ such that
\begin{equation}\label{eq:fixed}
  (t_1B_1,B_2,a\rho_W(t_1)^{-1},t_1\rho_W(t_1)b) = 
  (\rho(t_1)^{-1}B_1 \rho(t_1), \rho(t_1)^{-1}B_2 \rho(t_1),
  \rho(t_1)^{-1}a, b \rho(t_1))
\end{equation}
for any $t_1\in\C^*$. By the freeness of the $\GL(V)$-action on the
stable locus, $\rho(t_1)$ is uniquely determined by $t_1$. In
particular, the map $t_1\mapsto \rho(t_1)$ is a homomorphism. Its
conjugacy class is independent of the choice of the representative of
$x$.

We decompose $V$, $W$ as $\bigoplus V_i$, $\bigoplus W_i$ where
$t_1\in\C^*$ acts by $t_1^i$ on $V_i$, $W_i$. Then \eqref{eq:fixed}
means that $(B_1,B_2,a,b)$ is a representation of the handsaw quiver
\eqref{eq:handsaw}, where we shift the index $i$ suitably so that it
runs from $1$ to $n$.
Thus $\Q$ is a fixed point locus in $M(n,r)$ with a given conjugacy
class of a homomorphism $\rho\colon \C^*\to \GL(V)$. The same is true
for $\Q^\reg$.

We do not compare $\Q_0$ with $M_0(n,r)^{\C^*}$ directly, as we are
only intereseted in the image of $\Q$ under $\pi$ in practice.

This relation allows us to apply results on $M(n,r)$, $M_0^\reg(n,r)$
to $\Q$, $\Q_0^\reg$.
For example, $\Q$ is smooth as $M(n,r)$ is so. And the action of $G$
on the open locus consisting of stable points in $\mu^{-1}(0)$ is
free, and the quotient by $G$ is a principal $G$-bundle, etc.

\begin{Remark}\label{rem:typeA}
  A quiver variety of type $A_n$ \cite{Na-quiver}, which is associated
  with a quiver
  \begin{equation*}
    \xymatrix{
      V_1 \ar@<-.5ex>[r]_{B_1} \ar@<-.5ex>[d]_{b}
      & V_2 \ar@<-.5ex>[l]_{B_2} \ar@<-.5ex>[d]_{b} \ar@<-.5ex>[r]_{B_1} &
      \ar@<-.5ex>[l]_{B_2} & \cdots 
      & \ar@<-.5ex>[r]_{B_1} & V_{n} \ar@<-.5ex>[l]_{B_2} \ar@<-.5ex>[d]_b \\
      W_1 \ar@<-.5ex>[u]_a & W_2 \ar@<-.5ex>[u]_a && \cdots &
      & W_{n}\ar@<-.5ex>[u]_a }
  \end{equation*}
  is also a fixed component of $M(n,r)$ with respect to a
  $\C^*$-action, but a different one:
\begin{equation*}
  (B_1,B_2,a,b) \mapsto
  (t_1 B_1, t_1^{-1} B_2, a\rho_W(t_1)^{-1} , \rho_W(t_1)b) \quad t_1\in\C^*.
\end{equation*}

Note also that we exchange the stability and costability from those in
\cite{Na-quiver,Na-alg}. This change is not essential, as they are
exchanged if we replace all linear maps by their transposes.
\end{Remark}

\begin{Remark}
  In \cite{FR} a {\it chainsaw quiver variety\/} is introduced as a
  fixed point locus of $\mathfrak M(n,r)$ with respect to the cyclic
  group action defined by the same formula as \eqref{eq:action} for a
  homomorphism $\rho_W\colon \Z/n \Z\to \GL(W)$. Many of results in this section and those in \secref{sec:fixed} are generalizations to handsaw quiver varieties.
\end{Remark}

\subsection{Tangent space}

We consider the complex
\begin{NB}
\begin{equation}\label{eq:cpx'}
  \bigoplus_i \End(V_i)
  \overset{\sigma}{\longrightarrow}
  \bigoplus_i\left(
        \begin{matrix} \Hom(V_i, V_{i+1}) \oplus \End(V_i) \\ 
                        \oplus \\
                       \Hom(W_i, V_i) \oplus \Hom(V_i, W_{i+1})
                       \end{matrix}\right)
   \overset{\tau}{\longrightarrow} \bigoplus_i\Hom (V_i,V_{i+1}),
\end{equation}
\end{NB}%
\begin{equation}
  \label{eq:cpx}
  \bL(V,V)
  \overset{\sigma}{\longrightarrow}
     \begin{matrix} \bE(V, V) \oplus \bL(V,V) \\ 
                        \oplus \\
                       \bL(W, V) \oplus \bE(V, W)
                       \end{matrix}
   \overset{\tau}{\longrightarrow} \bE(V,V),
\end{equation}
where $\sigma$ and $\tau$ are defined by 
\begin{equation*}
   \sigma(\xi) = \begin{pmatrix} \xi B_1 - B_1 \xi \\
                            \xi B_2 - B_2 \xi \\
                            \xi a \\
                            - b\xi \end{pmatrix}, \quad
   \tau \begin{pmatrix} C_1 \\ C_2 \\ A \\ B \end{pmatrix}
        = [B_1,C_2] + [C_1,B_2] + aB + Ab.
\end{equation*}
This $\sigma$ is the differential of $G$-action and $\tau$ is the
differential of the map $\mu$. It is the $\C^*$-fixed part of the
complex considered in \cite[(2.13)]{MR2095899}.
It follows that $\sigma$ is injective and $\tau$ is surjective, and the
tangent space of $\Q$ at $[B_1,B_2,a,b]$ is isomorphic
to the middle cohomology group of the above complex.

As an application we have
\begin{equation*}
  \dim \Q = \dim \bL(W,V) + \dim \bE(V,W)
  = \sum_i \dim V_i (\dim W_i + \dim W_{i+1}).
\end{equation*}

As we remarked above, we have a natural principal $G$-bundle over $\Q$
from the construction. Since $V_i$ is a representation of $G$, we have
an induced vector bundle over $\Q$, which we also denote by $V_i$ for
brevity. We also consider $W_i$ as a trivial bundle. Then $\bL(V,V)$,
$\bE(V,V)$, etc are vector bundles over $\Q$, and $\tau$, $\sigma$ are
vector bundle homomorphisms. Therefore $\Ker\tau/\Ima\sigma$ is also a
vector bundle, which is isomorphic to the tangent bundle of $\Q$.

\subsection{Stratification}\label{subsec:strat}

According to a local structure at a closed orbit $x = [B_1,B_2,a,b]$,
we have a natural stratification of $\Q_0(\bv,\bw)$, similar to one for
a quiver variety \cite[\S6]{Na-quiver}.

As a first step of the stratification, we have $V = V' \oplus V''$ such
that the stabilizer subgroup acts trivially on $V''$. The restriction
of $(B_1,B_2,a,b)$ to $V''$ defines a point in $\Q_0^\reg(\bv'',\bw)$,
where $\bv''$ is the dimension vector of $V''$. The restriction of
$(B_1,B_2,a,b)$ to $V'$ is a datum with $W = 0$. Thus we have $a = 0 =
b$. We take a homomorphism $\C^*\to \prod \GL(V'_i)$ given by
$t\mapsto \prod t^i \id_{V'_i}$, and consider the limit $t\mapsto
0$. Then the closedness of the orbit implies $B_2|_{V'} =
0$. Therefore we only need to consider the closed orbit
$\left.B_1\right|_{V'_i}\in \End(V'_i)$ with respect to the
$\GL(V_i)$-action. Closed orbits of $\End(V'_i)$ under the conjugation
are classified by their eigenvalues with multiplicities, and hence
points in the symmetric product $S^{\dim V'_i}\C$. We thus have
\begin{equation*}
  \Q_0(\bv,\bw) = \bigsqcup \Q_0^\reg(\bv-\bv',\bw)
  \times S^{v'_1}\C\times S^{v'_2}\C\times\cdots \times S^{v'_{n-1}}\C,
\end{equation*}
where $\bv' = (v'_1,v'_2,\dots,v'_{n-1})$. 

We denote by $S^{\bv'}\C$ the product $S^{v'_1}\C\times
S^{v'_2}\C\times\cdots \times S^{v'_{n-1}}\C$, and consider it as the
configuration space of unordered colored points, where the set of
colors is $I$.

We refine the above stratification by further decomposing the
symmetric product part $S^{\bv'}\C$ (see \cite[\S1.3]{Kuz}):
\begin{equation*}
  S^{\bv'}\C = \bigsqcup_\Gamma S^{\bv'}_\Gamma \C,
\end{equation*}
where $\Gamma$ is a collection $\{ \bv'_1,\dots, \bv'_m\}$ of $\Z_{\ge
  0}[I]\setminus \{0\}$ with the ordering disregarded, satisfying
$\bv'_1+\cdots+\bv'_m = \bv'$. Here the number $m$ is not fixed.
The corresponding stratum $S^{\bv'}_\Gamma \C$ is defined as
\begin{equation*}
  S^{\bv'}_\Gamma \C = \left.\left\{ \sum_{\alpha=1}^m \bv'_\alpha x_\alpha\,
  \right| \text{$x_\alpha \neq x_\beta$ for $\alpha\neq \beta$} \right\},
\end{equation*}
where $\bv'_\alpha x_\alpha$ is a point $x_\alpha$ whose multiplicity
for a color $i\in I$ is given by the $i^{\mathrm{th}}$-component of
$\bv'_\alpha$. We thus get
\begin{equation}\label{eq:stratum}
  \Q_0(\bv,\bw) = \bigsqcup \Q_0^\reg(\bv-\bv',\bw)
  \times S^{\bv'}_\Gamma\C.
\end{equation}
We have
\begin{equation}\label{eq:dimofstratum}
  \dim \Q_0^\reg(\bv-\bv',\bw) \times S^{\bv'}_\Gamma\C = 
  \sum_i (\dim V_i - \dim V'_i) (\dim W_i + \dim W_{i+1}) + m.
\end{equation}

From \cite[\S3.2]{Na-qaff}, which is applicable thanks to
\subsecref{subsec:Jordan}, a local structure of a neighborhood of
$x\in\Q_0(\bv,\bw)$ and $\pi^{-1}(x)$ is determined by the stratum
$\Q_0^\reg(\bv-\bv',\bw)\times S^{\bv'}_\Gamma\C$ to which $x$ is
contained. Moreover it is described as a local structure of a
$\C^*$-fixed subvariety of products of several copies of quiver
varieties of Jordan type around $0$ and $\pi^{-1}(0)$.
As a particular result, we find that $\pi^{-1}(x)$ is isomorphic to
the product
\begin{equation*}
  \sO(\bv'_1,\bw)\times\cdots\times \sO(\bv'_m,\bw)
\end{equation*}
of various central fibers. This was observed in
\cite[Prop.~2.1.2]{Kuz} in terms of Laumon spaces.

\subsection{Group action}\label{subsec:group}

Let $G_\bw = \prod_{i\in \tI} \GL(W_i)$. It acts on $\bM$ by
conjugation. It preserves the equation $\mu = 0$ and commutes with the
$G$-action which we have used to define quotients. Therefore we have
induced $G_\bw$-actions on $\Q$ and $\Q_0$.

We also have a $\C^*$-action induced from
\begin{equation*}
  (B_1,B_2,a,b) \mapsto (B_1, t B_2, a, tb).
\end{equation*}
We set $\tG_\bw = \C^*\times G_\bw$.

\begin{NB}
Let $A$ be an abelian reductive subgroup of $\tG_\bw$. Suppose
$x\in\Q$ is fixed by $A$. Taking a representative $(B_1,B_2,a,b)$ of
$x$, we have a homomorphism $\rho\colon A\to G$ by the same argument
in \subsecref{subsec:Jordan}. Its conjugacy class is independent of
the choice of a representative of $x$. We denote by $Q(\rho)$ the
subvariety consisting of $A$-fixed points with given $\rho$ up to
conjugacy. This is a union of connected components of $Q(\rho)^A$, and
hence smooth.
\end{NB}

\section{Based maps and Laumon space}\label{sec:Laumon}

Let $\proj^1$ be the projective line and $z$ its inhomogeneous
coordinate.

The handsaw variety $\Q$ is isomorphic to the parabolic Laumon space,
that is the moduli space of flags
\begin{equation*}
  0 = E_0 \subset E_1 \subset \cdots \subset E_{n-1} 
  \subset E_n = W\otimes \shfO_{\proj^1}
\end{equation*}
of locally free sheaves over $\proj^1$ such that $\rank E_i =
\sum_{j\le i} \dim W_j$, $\langle c_1(E_i),[\proj^1]\rangle = -\dim
V_i$, and at $z=\infty\in\proj^1$ the infinity of $\proj^1$ equal to
$0\subset W_1 \subset W_1\oplus W_2 \subset \cdots\subset W_1\oplus
W_2\oplus\cdots \oplus W_{n-1} \subset W$.
We omit the word `parabolic' from the Laumon space for brevity
hereafter.

Remark that the inclusion $E_i\subset E_{i+1}$ is an injective
homomorphism of sheaves, and $E_i$ is not necessarily a subbundle of
$E_{i+1}$, where a stronger condition that $E_{i+1}/E_i$ is locally
free is required. If we impose this stronger condition, we get the
space of based rational maps from $\proj^1$ to the partial flag
variety of $W$. Here {\it based\/} means that $\infty$ is mapped to
the specific point $0\subset W_1 \subset W_1\oplus W_2 \subset
\cdots\subset W_1\oplus W_2\oplus\cdots \oplus W_{n-1} \subset W$.
This (possibly empty) open subscheme of Laumon space is isomorphic to
$\Q^{\reg}$.

This result is proved in \cite[\S2.3]{FR}, and is mentioned that it
goes back to Str\o mme \cite{Stromme}. It also follows from the ADHM
description (see \subsecref{subsec:Jordan}) together with Atiyah's
observation \cite{Atiyah} that the space of based rational maps is a
$\C^*$-fixed point in a framed instanton moduli space by the
observation in \subsecref{subsec:Jordan}. Since these proofs are
combination of results scattered in various literature, we give a
self-contained argument in this subsection for the sake of a reader.

It is also possible to describe $\pi\colon \Q\to \Q_0$ as
follows. Consider the inclusion $E_i\subset
W\otimes\shfO_{\proj^1}$. The quotient $W\otimes\shfO_{\proj^1}/E_i$
is not necessarily locally free, so we take its locally free part
$(W\otimes\shfO_{\proj^1}/E_i)_{l.f.}$ and define
\begin{equation*}
  \tilde E_i = \Ker \left[W\otimes\shfO_{\proj^1}\to 
  (W\otimes\shfO_{\proj^1}/E_i)_{l.f.}\right].
\end{equation*}
Then we have $E_i\subset \tilde E_i\subset W\otimes\shfO_{\proj^1}$ and
$W\otimes\shfO_{\proj^1}/\tilde E_i$ is locally free. We have
\(
  \tilde E_i \subset \tilde E_{i+1}
\)
and $\tilde E_{i+1}/\tilde E_i$ is locally free from the construction.
Therefore $0 = \tilde E_0\subset \tilde E_1\subset \cdots\subset E_n$
defines a point in $\Q^\reg$. The length of $\tilde E_i/E_i$ defines a
point in the symmetric product $S^n\C$ where $n = \langle c_1(\tilde
E_i) - c_1(E_i), [\proj^1]\rangle$. The identification of this
construction and $\pi$ is left as an exercise for the reader.

\subsection{From a handsaw quiver variety to a Laumon space}
\label{subsec:constr}

For $i \in I$ we consider homomorphisms
\begin{equation}
  \label{eq:alpha}
\begin{gathered}
  \alpha_i \colon
  \left(W_1 \oplus W_2 \oplus \cdots\oplus W_i\oplus V_i\right)
    \otimes\shfO_{\proj^1}
  \to V_i\otimes \shfO_{\proj^1}(1),
\\
  \beta_i \colon
  \left(W_1 \oplus W_2 \oplus \cdots\oplus W_i\oplus V_i\right)
    \otimes\shfO_{\proj^1} \to 
  \left(W_1 \oplus W_2 \oplus \cdots\oplus W_{i+1}\oplus V_{i+1}\right)
    \otimes\shfO_{\proj^1},
\end{gathered}
\end{equation}
defined by
\begin{equation*}
   \alpha_i \defeq (B_1^{i-1}a, B_1^{i-2}a, \dots, a, z - B_2),\qquad
   \beta_i \defeq \bigoplus_{j\le i} \id_{W_j} \oplus
   \begin{pmatrix}
     b \\ B_1
   \end{pmatrix}.
\end{equation*}
The equation $\mu = 0$ is equivalent to
\(
   B_1 \alpha_i = \alpha_{i+1} \beta_i.
\)

We consider linear maps $\left.\alpha_i\right|_z$,
$\left.\beta_i\right|_z$ between fibers of vector bundles at a point
$z\in\proj^1$.

\begin{Lemma}\label{lem:st}
  \textup{(1)} $(B_1,B_2,a,b)$ is stable if and only
  if $\left.\alpha_i\right|_z$ is surjective for all $i$.

  \textup{(2)} $(B_1,B_2,a,b)$ is costable if and only if
  $\Ker\left.\alpha_i\right|_z\cap \Ker\left.\beta_i\right|_z$ is zero
  for all $i$.
\end{Lemma}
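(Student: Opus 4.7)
The plan is to handle part (2) by an essentially direct unwinding of definitions, and part (1) by a contrapositive argument in one direction and an induction on $i$ in the other.

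For part (2), I would unpack $\ker\alpha_i|_z\cap\ker\beta_i|_z$. Since $\beta_i$ is the identity on each $W_j$-summand of its source, any element of $\ker\beta_i|_z$ has zero $W_j$-components, and its $V_i$-component lies in $\ker b\cap \ker B_1$; the extra condition coming from $\ker\alpha_i|_z$ is then $(z-B_2)v=0$. So the intersection vanishes for every $z$ if and only if no $V_i$ contains a nonzero $B_2$-eigenvector in $\ker b\cap \ker B_1$. Such a vector $v$ spans a one-dimensional $I$-graded subspace of $V$ that is stable under $B_1$ and $B_2$ and contained in $\ker b$, which witnesses failure of costability. For the converse, given a nonzero subspace $S\subset V$ stable under $B_1$ and $B_2$ and contained in $\ker b$, I would choose $i_0$ maximal with $S_{i_0}\neq 0$ (so that $B_1 S_{i_0}\subset S_{i_0+1}=0$) and take any $B_2$-eigenvector in $S_{i_0}$ to produce the forbidden element.

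For part (1), I would prove both implications by contraposition. Given a proper $I$-graded subspace $S\supset a(W)$ stable under $B_1$ and $B_2$, pick $i_0$ with $S_{i_0}\neq V_{i_0}$; since $B_2$ preserves $S_{i_0}$, it descends to $V_{i_0}/S_{i_0}\neq 0$ and has some eigenvalue $\lambda$. Because $S$ is $B_1$-stable and contains $a(W)$, every column $B_1^{i_0-j}a$ of $\alpha_{i_0}$ lands in $S_{i_0}$, so the image of $\alpha_{i_0}|_{\lambda}$ lies in $S_{i_0}+(\lambda-B_2)(V_{i_0})$, which reduces modulo $S_{i_0}$ to the proper image of $\lambda-B_2$ on $V_{i_0}/S_{i_0}$; this exhibits non-surjectivity. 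For the converse direction, I would proceed by induction on $i$ using the minimal admissible subspace: define $S_i\subset V_i$ recursively as the smallest $B_2$-invariant subspace containing $a(W_i)$ and $B_1(S_{i-1})$, with $S_0=0$. Then $S=\bigoplus S_i$ is the minimal $I$-graded subspace of $V$ stable under $B_1$ and $B_2$ and containing $a(W)$, and stability is equivalent to $S_i=V_i$ for all $i$. Assuming inductively $S_{i-1}=V_{i-1}$, the columns $B_1^{i-j}a(W_j)$ of $\alpha_i$ for $j<i$ lie in $B_1(V_{i-1})\subset S_i$, and $a(W_i)\subset S_i$; so the image of $\alpha_i|_z$ is contained in $S_i+(z-B_2)(V_i)$. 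The surjectivity hypothesis then forces $z-B_2$ to be surjective on $V_i/S_i$ for every $z\in\C$, hence $V_i/S_i=0$.

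The main obstacle I anticipate is the converse direction of (1): the tempting strategy of reading off an $S$ directly from a single non-surjective $\alpha_i|_z$ fails, because the image of $\alpha_i|_z$ is not $B_2$-stable once the moment-map relation is applied. Building $S$ inductively from below, as above, circumvents this, because at step $i$ the $B_2$-leakage from $V_{j}$ with $j<i$ has already been absorbed into $S_i$ by the minimality convention.
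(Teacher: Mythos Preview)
Your treatment of part~(2) is correct and essentially the same as the paper's.

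For part~(1), however, you have proved only one implication --- twice. Your first argument (given a proper $S$, exhibit non-surjectivity of $\alpha_{i_0}|_\lambda$ at an eigenvalue $\lambda$ of $B_2$ on $V_{i_0}/S_{i_0}$) is the contrapositive of ``surjective $\Rightarrow$ stable''. Your second argument (build the minimal $S$ and use surjectivity of every $\alpha_i|_z$ to force $S_i=V_i$ inductively) is a \emph{direct} proof of ``surjective $\Rightarrow$ stable''. Both are correct, and neither is the paper's dual-space argument for this direction; but you have not proved ``stable $\Rightarrow$ surjective''. In your second argument you explicitly invoke ``the surjectivity hypothesis'' to conclude $V_i/S_i=0$, which is the wrong way around; and if instead you assume stability (so $S_i=V_i$), the containment $\operatorname{Im}\alpha_i|_z\subset S_i+(z-B_2)V_i=V_i$ is vacuous.

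The obstacle you anticipate is also not real: $\operatorname{Im}\alpha_i|_z$ \emph{is} $B_2$-invariant, by the elementary identity (no use of $\mu=0$)
\[
B_2\,\alpha_i = \alpha_i
\left(\begin{array}{ccc|c}
 & z\,\id && 0 \\ \hline
 -B_1^{i-1}a & \cdots & -a & B_2
\end{array}\right).
\]
This is precisely the key to the missing direction in the paper: set ${}^iS_i=\operatorname{Im}\alpha_i|_z$ and ${}^iS_j=V_j$ for $j\neq i$. The identity above gives $B_2$-invariance; $B_1$-invariance follows by induction on $i$ from $B_1\alpha_{i-1}=\alpha_i\beta_{i-1}$ (this is where $\mu=0$ enters, and where the inductive surjectivity of $\alpha_{i-1}|_z$ is used). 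Stability of the datum then forces ${}^iS=V$, i.e.\ $\alpha_i|_z$ is surjective.
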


\begin{proof}
  (1) We first prove the $\Rightarrow$ direction.

  We set $\lsp{i}S_i$ as the image of $\left.\alpha_i\right|_z$
  and consider it as a subspace of $V_i$. We set all other
  $\lsp{i}S_j$ ($j\neq i$) as $V_j$, and define a graded subspace
  $\lsp{i}S = \bigoplus \lsp{i}S_j$. It contains $a(W)$. We have
  \begin{equation*}
     B_2 \alpha_i = \alpha_i
     \left(
       \begin{array}{ccc|c}
         & z\id && 0 \\ \hline
         -B_1^{i-1}a & \cdots & -a & B_2
       \end{array}
     \right).
  \end{equation*}
  This implies $\lsp{i}S$ is invariant under $B_2$.

  We show that $\lsp{i}S$ is invariant under $B_1$ by the induction on
  $i$. It is clear for $i=1$. Suppose that it is true for $i-1$. Then
  the stability of $(B_1,B_2,a,b)$ implies that $\lsp{i-1}S = V$. It
  means that $\left.\alpha_{i-1}\right|_z$ is surjective. Therefore
  $B_1 \alpha_{i-1} = \alpha_{i} \beta_{i-1}$ implies
  $B_1(V_{i-1})\subset \Ima\alpha_i$. Thus $\lsp{i}S$ is also
  invariant under $B_1$. It implies the surjectivity of
  $\left.\alpha_i\right|_z$ as we have already noticed.

  We next prove the $\Leftarrow$ direction. Suppose that
  $(B_1,B_2,a,b)$ is not stable and take a proper $I$-graded subspace
  $S = \bigoplus S_i$ of $V$ stable under $B_1$, $B_2$ and containing
  $a(W)$. We consider the annihilator $S^\perp$ of $S$ in $V^*$. Then
  it is invariant under $\lsp{t}B_1$, $\lsp{t}B_2$ and contained in
  $\Ker\lsp{t}a$. Consider a nonzero component $S_i^\perp$. Since it
  is preserved by $\lsp{t}B_2$, there exists an eigenvector $0\neq
  \phi$ of $\lsp{t}B_2$ in $S_i^\perp$. If the eigenvalue is $z$,
  $\lsp{t}\alpha_i$ has kernel at the point
  $z\in\C\subset\proj^1$. This contradicts the surjectivity of
  $\alpha_i$ at $z$. We thus have $S^\perp = 0$, i.e., $S = V$.

  (2) Note first that
  \begin{equation*}
    \Ker\left.\alpha_i\right|_z\cap \Ker\left.\beta_i\right|_z
    \cong \Ker
    \begin{bmatrix}
      z - B_2 \\ b \\ B_1
    \end{bmatrix}
    \colon V_i \to V_i\oplus W_{i+1} \oplus V_{i+1}.
  \end{equation*}

  Let us show the $\Rightarrow$ direction.
  Suppose that $\Ker\left.\alpha_i\right|_z\cap
  \Ker\left.\beta_i\right|_z$ is nonzero. It means that
  \(
    S_i \defeq \Ker( z - B_2)\cap \Ker b\cap \Ker B_1
  \)
  (in $V_i$) is nonzero.
  Setting other spaces $S_j = 0$, we define a graded subspace $S =
  \bigoplus_j S_j$. It is contained in $\Ker b$ and is invariant under
  $B_1$, $B_2$. 
  \begin{NB}
    Let us show that it is invariant also under $B_2$. Take $v\in
    S_i$. Then $B_2 v = zv$. Therefore $B_2 v$ is also in $S_i$.
  \end{NB}%
  The costability implies $S_i = 0$. This is a contradiction.

  We next prove $\Leftarrow$ direction. Suppose that $(B_1,B_2,a,b)$
  is not costable and take a nonzero $I$-graded subspace $S =
  \bigoplus S_i$ of $V$ stable under $B_1$, $B_2$ and contained in
  $\Ker b$. We prove $S_i = 0$ by an induction from above.

  Suppose $S_{n-1} \neq 0$. Since it is invariant under $B_2$, we have an
  eigenvector $0\neq v\in S_{n-1}$ with the eigenvalue $z$. Then
\(
  \Ker\left.\alpha_{n-1}\right|_z\cap \Ker\left.\beta_{n-1}\right|_z
\)
is nonzero. This is a contradiction, and hence we must have $S_{n-1} = 0$.

Suppose $S_j = 0$ for $j > i$ and $S_i \neq 0$. Then the same argument
as above shows
\(
   \Ker\left.\alpha_i\right|_z\cap \Ker\left.\beta_i\right|_z
\)
is nonzero. This contradicts the assumption, and hence we have
$S_i = 0$.
\end{proof}

Suppose that $(B_1,B_2,a,b)$ is stable. Thanks to the above lemma,
$E_i\defeq \Ker\alpha_i$ is a vector bundle of rank $\sum_{j\le i}
\dim W_j$ with $\langle c_1(E_i),[\proj^1]\rangle = -\dim V_i$.  Its
fiber at $z=\infty$ is identified with $W_1\oplus\cdots\oplus W_i$.

The homomorphism $\beta_i$ induces $E_i\to E_{i+1}$. From the proof of
\lemref{lem:st}(2) the corresponding linear map between fibers at a
point $z$ is not injective only when $z$ is an eigenvalue of
$B_2$. Thus it can happen only at finitely many points in
$\proj^1$. Therefore $E_i\to E_{i+1}$ is injective as a sheaf
homomorphism. We thus get a flag of locally free sheaves, that is a
point in a Laumon space. It is independent on the choice of a point in
a $G$-orbit, and hence a map from $\Q$ to the Laumon space. It also
gives a map from $\Q^\reg$ to the space of based maps by
\lemref{lem:st}(2).
We have an universal family on the space $\Q$, and therefore the above
construction gives a morphism from $\Q$ to the Laumon space.

\subsection{From a Laumon space to a handsaw quiver variety}

Let us describe the inverse morphism.

From the definition of $E_i$, we have a short exact sequence
\begin{equation*}
  0 \to E_i \to (W_1\oplus\cdots \oplus W_i\oplus V_i)\otimes\shfO_{\proj^1}
  \xrightarrow{\alpha_i} V_i\otimes\shfO_{\proj^1}(1)\to 0.
\end{equation*}
We thus have
\begin{equation*}
  V_i \cong H^1(\proj^1, E_i(-1)).
\end{equation*}
Using this observation, we take this as a definition of $V_i$ for the
inverse morphism. Note that $H^0(\proj^1,E_i(-1)) = 0$ as $E_i$ is a
subsheaf of a trivial sheaf $E_n = W\otimes\shfO_{\proj^1}$. Hence
$\dim V_i$ is $-\langle c_1(E_i),\proj^1\rangle$ as expected.

We next study $H^1(\proj^1,E_i(-2))$ and show that it is isomorphic to
$W_1\oplus\cdots \oplus W_i\oplus V_i$. This is expected from the same
short exact sequence.

Considering a short exact sequence
\begin{equation*}
  0 \to E_i(-2) \to E_i(-1) \to E_i|_{z=\infty}\to 0,
\end{equation*}
we get
\begin{equation}\label{eq:E_i(-2)}
  0\to W_1\oplus\cdots\oplus W_i \xrightarrow{\varphi} H^1(\proj^1,E_i(-2))
  \xrightarrow{\psi} V_i = H^1(\proj^1,E_i(-1)) \to 0.
\end{equation}

Since $E_i$ is a subsheaf of $W\otimes\shfO_{\proj^1}$, we have a
homomorphism
\begin{equation*}
  H^1(\proj^1,E_i(-2)) \to
  W\otimes H^1(\proj^1,\shfO_{\proj^1}(-2))
  \cong W.
\end{equation*}
When we compose it with $\varphi$, it gives the inclusion
\begin{equation*}
  W_1\oplus\cdots\oplus W_i \to W.
\end{equation*}
\begin{NB}
This follows from
\begin{equation*}
  \begin{CD}
    0 @>>> E_i(-2) @>>> E_i(-1) @>>> W_1\oplus\cdots\oplus W_i @>>> 0
\\
    @. @VVV @VVV @VVV @.
\\
    0 @>>> W\otimes\shfO_{\proj^1}(-2) @>>> W\otimes\shfO_{\proj^1}(-1)
    @>>> W @>>> 0.
  \end{CD}
\end{equation*}
\end{NB}%
Therefore we compose this with the projection $W\to W_1\oplus\cdots
\oplus W_i$ to get a splitting of \eqref{eq:E_i(-2)}. Therefore we
have
\begin{equation*}
  H^1(\proj^1, E_i(-2)) \cong W_1\oplus\cdots \oplus W_i\oplus V_i.
\end{equation*}

We define $B_1\colon V_i\to V_{i+1}$ as an induced homomorphism
\begin{equation*}
  H^1(\proj^1,E_i(-1)) \to H^1(\proj^1,E_{i+1}(-1))
\end{equation*}
from the inclusion $E_i\to E_{i+1}$. 

The multiplication of the inhomogeneous coordinate $z$ induces a
homomorphism
\begin{equation*}
  W_1\oplus \cdots \oplus W_i \oplus V_i = H^1(\proj^1,E_i(-2))
  \xrightarrow{\zeta} V_i = H^1(\proj^1,E_i(-1)).
\end{equation*}
We set its restriction to $W_i$ and $V_i$ as $a$ and $-B_2$
respectively. Other components for $W_1$, \dots, $W_{i-1}$ come from
the same homomorphisms for $E_{i-1}$ together with $B_1$. Then by
induction we see that they are $B_1^{i-1}a$, \dots, $B_1 a$.
In view of the construction in \subsecref{subsec:constr} this is
natural as $\zeta = \left.\alpha_i\right|_{z=0}$.
In fact, $E_i$ is recovered as
\begin{equation*}
  \Ker\left(\zeta - z\psi \colon
    H^1(\proj^1,E_i(-2))\otimes \shfO_{\proj^1}(-1)
    \to
    H^1(\proj^1,E_i(-1))\otimes \shfO_{\proj^1}\right),
\end{equation*}
as one can show using the resolution of the diagonal
\begin{equation*}
  0 \to \shfO_{\proj^1}(-1)\boxtimes\shfO_{\proj^1}(-1)
  \to \shfO_{\proj^1\times\proj^1} \to \shfO_{\Delta} \to 0.
\end{equation*}
(This is a basis of the well-known theorem: the derived category of
coherent sheaves on $\proj^1$ is equivalent to that of representations
of the Kronecker quiver.) The above definition has been given so that
$\zeta - z\psi$ coincides with $\alpha_i$ in the previous subsection.

We next consider the commutative diagram
\begin{equation*}
  \begin{CD}
  H^1(\proj^1,E_i(-2)) @>>> H^1(\proj^1,E_{i+1}(-2))
\\
  @V\cong VV @VV\cong V
\\
  W_1\oplus\cdots\oplus W_i\oplus V_i
  @>>> W_1\oplus\cdots\oplus W_i\oplus W_{i+1} \oplus V_{i+1},
  \end{CD}
\end{equation*}
where the upper arrow is induced homomorphism from the inclusion
$E_i\to E_{i+1}$. The restriction of the lower arrow to
$W_1\oplus\cdots\oplus W_i$ is the inclusion to $W_1\oplus\cdots\oplus
W_i\oplus W_{i+1}$. And the component from $V_i\to V_{i+1}$ is given
by $B_1$. These come from consideration of \eqref{eq:E_i(-2)} and the
same sequence for $E_{i+1}$. Let us set the remaining component
$V_i\to W_{i+1}$ as $b$.

We have just defined all linear maps $B_1$, $B_2$, $a$, $b$. They
satisfy the equation $\mu = 0$. This follows from the commutativity of
the diagram
\begin{equation*}
  \begin{CD}
    H^1(\proj^1,E_i(-2)) @>>> H^1(\proj^1,E_{i+1}(-2))
\\
   @VVV @VVV
\\
   H^1(\proj^1,E_i(-1)) @>>> H^1(\proj^1,E_{i+1}(-1)),
  \end{CD}
\end{equation*}
where the vertical arrows are given by the multiplication of $z$.
\begin{NB}
In fact, this commutative diagram gives  
\begin{equation*}
  \begin{CD}
    W_1\oplus\cdots\oplus W_i\oplus V_i
  @>>{\bigoplus_{j\le i} \id_{W_i} \oplus b \oplus B_1}> 
  W_1\oplus\cdots\oplus W_i\oplus W_{i+1} \oplus V_{i+1}
\\
   @V B_1^{i-1}a\oplus \dots a\oplus -B_2 VV
   @VV B_1^{i}a\oplus \dots a\oplus -B_2 V
\\
    V_i @>B_1 >> V_{i+1}.
  \end{CD}
\end{equation*}
This is nothing but $B_1\alpha_i = \alpha_{i+1}\beta_i$ evaluated at $z=0$.
\end{NB}

The stability of $(B_1,B_2,a,b)$ follows from \lemref{lem:st}(1), as
we have already noticed that $E_i$ is recovered as $\Ker\alpha_i$.

If the original $E_1\subset\cdots\subset E_{n}$ is a flag of
subbundles, then $(B_1,B_2,a,b)$ is costable by \lemref{lem:st}(2).

We thus have maps from the Laumon space and the based map space to
$\Q$ and $\Q^\reg$ respectively. These are morphisms thanks to a
universal family on $\Q$. They are inverse to morphisms constructed in
the previous subsection, as we have checked during the above proof.

\section{Fixed points}\label{sec:fixed}

In this section we study the fixed point set with respect to the
action of the maximal torus $\hT_\bw$ of $\tG_\bw$. One of
applications is a formula of Betti numbers of $\sO(\bv,\bw)$, which
will be used in a later section. This study is well known in the
context of Laumon spaces as in the previous section, but we present it
here for completeness.

\subsection{Parametrization}

We fix a decomposition $W_i = \bigoplus_\alpha W_i^{\alpha}$ of
$W_i$ into sum of $1$-dimensional subspaces, and consider the torus
$T_i$ of $\GL(W_i)$ preserving it. Here $\alpha$ runs from $1$ to
$\dim W_i$.
We set $\hT_\bw = \C^*\times\prod_i T_i$. This is an example of an
abelian reductive subgroup of $\tG_{\bw}$ considered in
\subsecref{subsec:group}. It is maximal among such subgroups.

Let us describe $\hT_\bw$-fixed points in $\Q$. Take a point $x\in\Q$
and its representative $(B_1,B_2,a,b)$. It is fixed by $\prod_i T_i$
if and only if we have a direct sum decomposition
\begin{equation*}
   V = \bigoplus_{i,\alpha} V^{\alpha}_i, \qquad
   V^{\alpha}_i = \bigoplus_{j\in I} V^{\alpha}_{i;j}
\end{equation*}
such that $V_i^{\alpha}$ with $W_i^{\alpha}$ is invariant under
$(B_1,B_2,a,b)$. Here $W_i^{\alpha}$ is considered as an
$\tI$-graded vector space by setting its $i^{\mathrm{th}}$-component
as $W_i^{\alpha}$ and all other components as $0$. Then each summand
of $(B_1,B_2,a,b)$ corresponding to $W_i^{\alpha}$ is a
$\C^*$-equivariant rank $1$-torsion free sheaf on $\proj^2$, in other
words, a $\C^*$-equivariant ideal sheaf on $\C^2$ by
\subsecref{subsec:Jordan}.

It is fixed further by the remaining $\C^*$ if and only if the ideal
sheaf is $\C^*\times \C^*$-equivariant, where the action is given by
\begin{equation*}
  (B_1,B_2,a,b) \mapsto (t_1 B_1, t B_2, a, t_1 t b).
\end{equation*}
It is a monomial ideal and is parametrized by a Young diagram (see
e.g., \cite[Chapter~5]{Lecture}).

In summary, $\hT_\bw$-fixed points in $\Q$ are parametrized by a tuple
of Young diagrams $Y_i^{\alpha}$ indexed by $i\in\tI$ and
$\alpha=1,\dots,\dim W_i$.
We denote by $\vec{Y}$ the tuple of Young diagrams, and consider it as
a point in $\Q$.

When we draw Young diagrams on the plane, it is natural to shift
$Y_i^{\alpha}$ to the right from the convention in
\cite[Chapter~5]{Lecture} so that the $x$-coordinate of the box at the
bottom left corner is $i$. See Figure~\ref{fig:Young}.
The constraint is that the total number of boxes whose $x$-coordinates
are $i$ is equal to $\dim V_i$.

\begin{figure}[htbp]
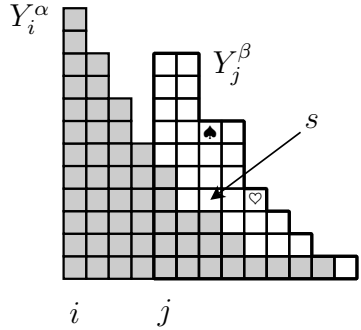

\def\JPicScale{1.5}
\centering
\input Young.pst
\caption{A tuple of Young diagrams}
\label{fig:Young}
\end{figure}

Take a box $s$ in the plane as in the figure. We define its leg length
$l_{Y_j^\beta}(s)$ and arm length $a_{Y_j^\beta}(s)$ with respect to
the Young diagram $Y_j^\beta$ as follows. We put a $\heartsuit$ mark
on the rightmost box in $Y_j^\beta$ in the same row with $s$. If there
is no row in $Y_j^\beta$, we put a mark on the box whose
$x$-coordinate is $(j-1)$. Then $l_{Y_j^\beta}(s)$ is the difference
of the $x$-coordinates of the marked box and $s$. In the above example,
it is equal to $2$. Changing rows with columns, we define the arm
length in the same way, where we put a $\spadesuit$ mark for the
relevant one in the figure. Thus we have $a_{Y_j^\beta}(s) = 3$ in the
example.
These may have negative values if $s$ is not in the Young diagram. In
the above example, the leg and arm lengths with respect to $Y_i^\alpha$ are 
\(
   l_{Y_i^\alpha}(s) = -2,
\)
\(
   a_{Y_i^\alpha}(s) = -1
\)
respectively.

\subsection{Character of the tangent space}

Let $\vec{Y}$ be a $\hT$-fixed point in $\Q$ as above. The tangent space
$T_{\vec{Y}}\Q$ of $\Q$ at $\vec{Y}$ is a $\hT$-module. 

We denote by $e^\alpha_i$ the character of $\hT\to \C^*$,
corresponding to the summand $W_i^\alpha$.

\begin{Proposition}\label{prop:tangent}
  The character of the $\hT$-module $T_{\vec{Y}}\Q$ is given by
  \begin{equation*}
    \ch T_{\vec{Y}}\Q =
    \sum_{(i,\alpha),(j,\beta)}
    e^\beta_j \left(e^\alpha_i\right)^{-1}
    \left(
    \sum_{\substack{s\in Y^\alpha_i \\ l_{Y_j^\beta}(s) = 0}} t^{a_{Y^\alpha_i}(s)+1}
    +
    \sum_{\substack{s\in Y^\beta_j \\ l_{Y_i^\alpha}(s)=-1}} t^{-a_{Y^\beta_j}(s)}
    \right).
  \end{equation*}
\end{Proposition}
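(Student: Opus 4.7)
The plan is to reduce the computation to the well-known character of the tangent space at a torus-fixed point of the Gieseker moduli $M(n,r)$, via the identification of $\Q$ with a $\C^*$-fixed component of $M(n,r)$ established in \subsecref{subsec:Jordan}. The whole point of that identification is precisely to import character formulas of this kind.

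First, I would extend the torus $\hT_\bw$ to a maximal torus $\hT$ of $M(n,r)$ that contains the ambient $\C^*$ of \eqref{eq:action}, together with the diagonal torus of $\GL(W)$ acting on the line decomposition $W=\bigoplus_{i,\alpha} W_i^\alpha$ and the translation $\C^*$ of $B_2$. The $\hT$-fixed points of $M(n,r)$ are classically indexed by tuples of Young diagrams, one per $W_i^\alpha$, and those lying in the $\C^*$-fixed component $\Q$ are exactly the tuples $\vec Y$ parametrizing the $\hT_\bw$-fixed points of $\Q$ described above, with the index $i$ recording which $W_i$ the line $W_i^\alpha$ belongs to.

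Next, I would recall the standard character formula for $T_{\vec Y} M(n,r)$, computed from the Jordan-type tangent complex as in \cite[Ch.~5]{Lecture}: for each ordered pair $((i,\alpha),(j,\beta))$ the contribution has the shape
\begin{equation*}
  e^\beta_j (e^\alpha_i)^{-1}
  \Bigl(
  \sum_{s\in Y^\alpha_i} t_1^{-L(s)}\, t^{\,a_{Y^\alpha_i}(s)+1}
  \;+\;
  \sum_{s\in Y^\beta_j} t_1^{\,L'(s)+1}\, t^{-a_{Y^\beta_j}(s)}
  \Bigr),
\end{equation*}
where $t_1$ is the character of the ambient $\C^*$ of \eqref{eq:action}, $t$ is the character of the $B_2$-translation, and $L(s)$, $L'(s)$ are leg lengths measured against $Y^\beta_j$, respectively $Y^\alpha_i$, with the shifted positioning (bottom-left corner of $Y_j^\beta$ at $x=j$) already taken into account. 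Since $\Q$ is a component of the $\C^*$-fixed locus, $T_{\vec Y}\Q$ is the weight-zero subspace of $T_{\vec Y} M(n,r)$ for the ambient $\C^*$; so its character is obtained by extracting the coefficient of $t_1^0$. In the first sum this forces $L(s)=0$ and in the second it forces $L'(s)=-1$, which, with the paper's conventions for $l_{Y^\beta_j}$ and $l_{Y^\alpha_i}$ (including the special rule when a row or column is absent, which exactly corresponds to the limiting case of the Hilbert-scheme formula), yields the two conditions $l_{Y_j^\beta}(s)=0$ and $l_{Y_i^\alpha}(s)=-1$ appearing in the statement.

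The main obstacle is conventional rather than conceptual: one must carefully match the paper's shifted placement of the Young diagrams and its extended definition of leg/arm length (with $\heartsuit$/$\spadesuit$ marks placed outside the diagram when the relevant row or column is empty) with the standard Hilbert-scheme conventions, so that the $\C^*$-weight-zero condition on the monomial in $t_1$ becomes exactly $l_{Y_j^\beta}(s)=0$ or $l_{Y_i^\alpha}(s)=-1$. As a cross-check, one can instead compute the character directly from the tangent complex \eqref{eq:cpx}: expand the $\hT_\bw$-characters of the $V_i$'s in terms of the box positions of the $Y_i^\alpha$'s, form the alternating sum over the three terms of \eqref{eq:cpx}, and simplify. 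This gives the same combinatorial identity and provides an independent verification with all conventions kept strictly inside the handsaw setup.
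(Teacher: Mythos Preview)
Your proposal is correct and follows essentially the same approach as the paper: quote the known character formula for $T_{\vec{Y}}M(n,r)$ (the paper cites \cite[Th.~2.11]{MR2199008}), use that $\Q$ is a union of components of $M(n,r)^{\C^*}$, and extract the coefficient of $t_1^0$, which forces $l_{Y_j^\beta}(s)=0$ in the first sum and $l_{Y_i^\alpha}(s)=-1$ in the second. The paper's proof is even terser than yours and does not include your suggested cross-check via the complex \eqref{eq:cpx}, but the core argument is identical.
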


This is a direct consequence of the formula of the tangent space of
$M(n,r)$ at the torus fixed point $\vec{Y}$ (see e.g.,
\cite[Th.~2.11]{MR2199008}), as $\Q$ is an union of components of the fixed
point locus $M(n,r)^{\C^*}$. More precisely, the character of the
$\C^*\times\hT$-module $T_{\vec{Y}}M(n,r)$ is given by
\begin{equation*}
  \sum_{(i,\alpha),(j,\beta)}
    e^\beta_j \left(e^\alpha_i\right)^{-1}
    \left(
    \sum_{s\in Y^\alpha_i} t_1^{-l_{Y_j^\beta}(s)} t^{a_{Y^\alpha_i}(s)+1}
    +
    \sum_{s\in Y^\beta_j} t_1^{l_{Y_i^\alpha}(s)+1} t^{-a_{Y^\beta_j}(s)}
    \right).
  \end{equation*}
And we take its coefficient of $t_1^0$ to get the above formula.

Let us explain the meaning of an individual term for $(i,\alpha)$,
$(j,\beta)$ in the sum. The complex \eqref{eq:cpx} computing the
tangent space of $\Q$ decomposes according to $W =
\bigoplus_{i,\alpha} W_i^\alpha$, $V = \bigoplus V_i^\alpha$:
\begin{equation*}
  \bL(V_i^\alpha,V_j^\beta)
  \overset{\sigma}{\longrightarrow}
     \begin{matrix}
       \bE(V_i^\alpha, V_j^\beta) \oplus \bL(V_i^\alpha,V_j^\beta) \\ 
       \oplus \\
       \bL(W_i^\alpha, V_j^\beta) \oplus \bE(V_i^\alpha, W_j^\beta)
     \end{matrix}
   \overset{\tau}{\longrightarrow} \bE(V_i^\alpha,V_j^\beta).
\end{equation*}
Let us denote this by $C_{(i,\alpha),(j,\beta)}$, where we set the
middle term to be of degree $0$. Then $\ch T_{\vec{Y}}\Q
= \sum_{(i,\alpha),(j,\beta)} \ch C_{(i,\alpha),(j,\beta)}$.
Note that we have
\begin{equation}\label{eq:C}
  \rank C_{(i,\alpha),(j,\beta)} =
  \dim V^\beta_{j;i} + \dim V^\alpha_{i;j-1}.
\end{equation}
One of two terms vanishes according to either $i\ge j$ or $i < j$.

\subsection{Betti numbers of handsaw quiver varieties}

Since $\Q$ has a torus action with isolated fixed points, we can
compute Betti numbers by decomposing it into $(\pm)$-attracting sets.

Let us choose $\lambda\colon \C^*\to \hT = \C^*\times \prod_i T_i$ so
that the weight of the $\C^*$-component is $1$ and the difference of
the weights of $W_i^\alpha$ and $W_j^\beta$-components is sufficiently
large if $(i,\alpha) > (j,\beta)$. Here we put the lexicographic order
on the set of indices $(i,\alpha)$, i.e., $(i,\alpha) > (j,\beta)$ if
and only if $i > j$ or $i=j$, $\alpha > \beta$.

Under this choice of $\lambda$, every point in $\Q_0$ converges to $0$
as $t\to 0$, and every point except $0$ goes to infinity as
$t\to\infty$ by the argument in \cite[Th.~3.5]{MR2095899}.

For each fixed point $\vec{Y}$ we consider $(\pm)$-attracting set:
\begin{equation*}
  S_{\vec{Y}} = \left\{ x\in\Q \left|\, 
      \lim_{t\to 0} \lambda(t)\cdot x = \vec{Y}
      \right.\right\},
\quad  
  U_{\vec{Y}} = \left\{ x\in\Q \left|\
      \lim_{t\to \infty} \lambda(t)\cdot x = \vec{Y}
      \right.\right\}.
\end{equation*}
By the remark above, we have $\sqcup_{\vec{Y}} S_{\vec{Y}} = \Q$ and
$\sqcup_{\vec{Y}} U_{\vec{Y}} = \sO$.
These are affine spaces whose dimensions are equal to sum of
dimensions of positive and negative weight spaces of $T_{\vec{Y}}\Q$.

Let us calculate these dimensions by using the formula in
\propref{prop:tangent}. From our choice of $\lambda$, the sum of the
dimensions of positive weight spaces is the number of terms with
$(i,\alpha) < (j,\beta)$ or $(i,\alpha) = (j,\beta)$ and the power of
$t$ is positive. If $(i,\alpha) = (j,\beta)$, only the first sum in
the parentheses survive, and hence the power is positive
always. Therefore we have
\begin{equation*}
  \dim S_{\vec{Y}} = \sum_{(i,\alpha) \le (j,\beta)} \rank C_{(i,\alpha),(j,\beta)},
  \qquad
  \dim U_{\vec{Y}} = \sum_{(i,\alpha) > (j,\beta)} \rank C_{(i,\alpha),(j,\beta)}.
\end{equation*}
By \eqref{eq:C}, we get
\begin{equation}\label{eq:attracting}
  \begin{split}
  \dim S_{\vec{Y}} &= \sum_{i} \dim V_i \dim W_{i+1} + \sum_{(i,\alpha)} \alpha \dim V^\alpha_{i;i},
\\
  \dim U_{\vec{Y}} &= \sum_i \dim V_i \dim W_i - \sum_{(i,\alpha)} \alpha \dim V^\alpha_{i;i}.
  \end{split}
\end{equation}
\begin{NB}
\begin{equation*}
  \begin{split}
  \dim S_{\vec{Y}} &= \sum_{\substack{(i,\alpha) \le (j,\beta) \\ i < j}} \dim V^\alpha_{i;j-1}
  + \sum_{i,\alpha \le \beta} \dim V^\beta_{i;i} \\
  &= \sum_{\substack{(i,\alpha)}} \sum_{j=i}^{n-1} \dim V^\alpha_{i;j}\dim W_{j+1}
  + \sum_{i,\beta} \beta \dim V^\beta_{i;i}, 
  \end{split}
\end{equation*}
\begin{equation*}
  \begin{split}
  \dim U_{\vec{Y}} &= \sum_{\substack{(i,\alpha) < (j,\beta)}} \dim V^\alpha_{i;j} \\
  &= \sum_{\substack{(i,\alpha)}} \sum_{j=i+1}^{n-1} \dim V^\alpha_{i;j}\dim W_{j}
  + \sum_{(i,\alpha)} \dim V^\alpha_{i;i} (\dim W_i - \alpha)
\\
  & = \sum_{\substack{(i,\alpha)}} \sum_{j=i}^{n-1} \dim V^\alpha_{i;j}\dim W_{j}
  - \sum_{(i,\alpha)} \alpha \dim V^\alpha_{i;i}.
  \end{split}
\end{equation*}
\end{NB}

Let us fix $\bw$ and consider Poincar\'e polynomials of $\sO(\bv,\bw)$
for various $\bv$. To consider their generating function we introduce
the following notation: Let $e^{\bv}$ be an element of
the group ring of $\Z[I]$ given by
\begin{equation*}
  e^{\bv} \defeq e^{v_1\alpha_1+\cdots + v_{n-1}\alpha_{n-1}},
\end{equation*}
where $\bv = (v_1,\dots, v_{n-1})$. Then we have the following
infinite product expansion.
\begin{Theorem}\label{thm:Poincare}
  \begin{equation*}
    \sum_\bv P_t(\sO(\bv,\bw)) e^{\bv} 
    = \prod_{i\in I} \prod_{\alpha=1}^{\dim W_i} \prod_{j=i}^{n-1}
    \frac1{1 - t^{\dim W_i+ \cdots + \dim W_j - \alpha} e^{\alpha_i+\cdots + \alpha_j}}.
  \end{equation*}
\end{Theorem}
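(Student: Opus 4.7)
The plan is to combine the Bia{\l}ynicki--Birula decomposition induced by the one-parameter subgroup $\lambda$ chosen just above with the weight computation \eqref{eq:attracting}, and then recognise the resulting sum as a product of partition generating functions. Since $\Q$ is smooth and its $\hT_\bw$-fixed points are isolated, the $(-)$-attracting set $U_{\vec{Y}}$ of each fixed point $\vec{Y}$ is an affine cell of complex dimension equal to the number of negative weights of $\lambda$ on $T_{\vec{Y}}\Q$, which is precisely the integer $\dim U_{\vec{Y}}$ given by \eqref{eq:attracting}. The decomposition $\sO = \bigsqcup_{\vec{Y}} U_{\vec{Y}}$ therefore yields
\begin{equation*}
  P_t(\sO(\bv,\bw)) = \sum_{\vec{Y}} t^{\dim U_{\vec{Y}}},
\end{equation*}
where the sum is over $\hT_\bw$-fixed points with dimension vector $\bv$.

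Summing over $\bv$ weighted by $e^\bv$, the left hand side of the theorem becomes $\sum_{\vec{Y}} t^{\dim U_{\vec{Y}}}\, e^{\bv(\vec{Y})}$, with $\vec{Y}$ ranging over all tuples of Young diagrams appearing as $\hT_\bw$-fixed points of $\Q = \bigsqcup_\bv \Q(\bv,\bw)$. Such a $\vec{Y}$ is an independent choice of partition $Y_i^\alpha$ for each pair $(i,\alpha)$ with $i \in \tI$ and $\alpha = 1, \ldots, \dim W_i$, subject to the constraint that every box of $\vec{Y}$ has $x$-coordinate in $I = \{1,\ldots,n-1\}$; equivalently, $Y_n^\alpha$ is empty and each $Y_i^\alpha$ ($i \in I$) fits in the horizontal strip of columns $i, \ldots, n-1$. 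Writing $m_k = \dim V^\alpha_{i;k}$ for the $k^{\text{th}}$ column height of $Y_i^\alpha$, a short rearrangement of \eqref{eq:attracting} gives
\begin{equation*}
  \dim U_{\vec{Y}} = \sum_{(i,\alpha)} \Bigl(\sum_{k=i}^{n-1} m_k \dim W_k - \alpha\, m_i\Bigr),
\end{equation*}
so the double sum factors as $\prod_{i \in I}\prod_{\alpha=1}^{\dim W_i} Z_{i,\alpha}$, the $i = n$ contribution being trivial.

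Finally, I would evaluate each $Z_{i,\alpha}$ by the change of variables $r_\ell = m_\ell - m_{\ell+1} \ge 0$ with $m_n := 0$, so that a partition in the strip is the same as an arbitrary tuple $(r_i, \ldots, r_{n-1}) \in \Z_{\ge 0}^{n-i}$. Interchanging summations yields
\begin{equation*}
  \sum_{k=i}^{n-1} m_k \dim W_k - \alpha m_i = \sum_{\ell=i}^{n-1} r_\ell(\dim W_i + \cdots + \dim W_\ell - \alpha),
\end{equation*}
and, similarly, $\sum_{k=i}^{n-1} m_k \alpha_k = \sum_{\ell=i}^{n-1} r_\ell (\alpha_i + \cdots + \alpha_\ell)$. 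The independent geometric sums over $r_\ell \ge 0$ then produce exactly one factor of the claimed infinite product for each $\ell \in \{i, \ldots, n-1\}$.

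The combinatorial unpacking in the last paragraph is routine; the real content is already in place, namely the smoothness of $\Q$ and the attracting-set dimension formula \eqref{eq:attracting}. The point most requiring care is the restriction confining each $Y_i^\alpha$ to the strip $x \le n-1$: this is precisely what matches the factor range $j = i, \ldots, n-1$ in the statement, ensures that all exponents $\dim W_i + \cdots + \dim W_j - \alpha$ are non-negative, and rules out spurious factors coming from $Y_n^\alpha$.
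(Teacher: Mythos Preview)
Your proposal is correct and follows exactly the approach the paper sets up: the paper itself gives no explicit proof of \thmref{thm:Poincare}, simply stating it after deriving \eqref{eq:attracting}, so your argument is precisely the intended deduction---decompose $\sO(\bv,\bw)$ into the affine cells $U_{\vec Y}$, rewrite $\dim U_{\vec Y}$ from \eqref{eq:attracting} as a sum over the individual Young diagrams $Y_i^\alpha$, and then pass to the differences $r_\ell = m_\ell - m_{\ell+1}$ to factor the generating function. Your check that each $Y_i^\alpha$ is confined to columns $i,\dots,n-1$ (forcing $Y_n^\alpha=\emptyset$) is exactly the point that makes the product finite in $j$, and the rest is a clean unpacking of what the paper leaves implicit.
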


When all $\dim W_i = 1$, the right hand side is essentially equal to
Lusztig's $q$-analog of Kostant's partition function, as first noticed
by Kuznetsov \cite{Kuz}.

\begin{Corollary}
  $\Q$ is connected, provided it is nonempty.
\end{Corollary}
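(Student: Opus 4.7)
The plan is to combine the Bialynicki-Birula paving $\Q(\bv,\bw)=\bigsqcup_{\vec Y}S_{\vec Y}$ of the previous subsection with the product formula of \thmref{thm:Poincare}. Since $\Q$ is smooth, connectedness coincides with irreducibility, and I would first check that irreducibility is equivalent to the existence of a unique top-dimensional cell: any $S_{\vec Y}$ with $\dim S_{\vec Y}=\dim\Q$ is an open subvariety of $\Q$, so two distinct top cells would be disjoint nonempty open sets and hence force reducibility, while a unique top cell is open and dense, so its irreducible closure exhausts $\Q$. Using the identity $\dim S_{\vec Y}+\dim U_{\vec Y}=\dim\Q$ at each isolated fixed point, the corollary reduces to showing that exactly one $\vec Y$ satisfies $\dim U_{\vec Y}=0$ whenever $\Q(\bv,\bw)$ is nonempty.

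Next I would compute this count using \thmref{thm:Poincare}. By the BB paving $\sO=\bigsqcup_{\vec Y}U_{\vec Y}$ of the projective variety $\sO$, the number in question equals $b_0(\sO(\bv,\bw))$, which by the theorem is the coefficient of $e^{\bv}$ in the $t=0$ specialization of the product. A factor survives only when its $t$-exponent $\dim W_i+\cdots+\dim W_j-\alpha$ vanishes, forcing $\alpha=\dim W_i$ and $\dim W_{i+1}=\cdots=\dim W_j=0$. Writing $i_1<\cdots<i_m$ for the indices $i\in I$ with $\dim W_i>0$ and setting $i_{m+1}\defeq n$, the specialization collapses to
\[
 \prod_{k=1}^{m}\prod_{j=i_k}^{i_{k+1}-1}\frac{1}{1-e^{\alpha_{i_k}+\cdots+\alpha_j}}.
\]

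Finally I would observe that the exponents $\alpha_{i_k}+\cdots+\alpha_j$ appearing in these factors are linearly independent in $\Z[I]$: for a fixed $k$ their supports $[i_k,j]$ are nested, and the index ranges $[i_k,i_{k+1}-1]$ for different $k$ are disjoint. Hence any $\bv$ has at most one expression as a nonnegative integer combination of these exponents, so the coefficient of $e^{\bv}$ is $0$ or $1$. Since $\Q(\bv,\bw)$ is nonempty it has $b_0\ge 1$, so the coefficient must equal $1$, giving the corollary. The main delicate step is the first paragraph, where one has to combine smoothness of $\Q$ with the attracting BB paving to translate connectedness into the counting problem; the remainder is a formal manipulation of the product and a one-line linear-independence check.
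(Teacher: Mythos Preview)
Your proof is correct and rests on the same reduction as the paper's: both show that there is exactly one fixed point $\vec Y$ with $\dim U_{\vec Y}=0$ (equivalently, a unique top-dimensional attracting cell), and the paper leaves implicit the passage from this to connectedness that you spell out carefully in your first paragraph.

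The only difference is in how the count is carried out. The paper works directly with the second line of \eqref{eq:attracting}, rewriting $\dim U_{\vec Y}$ as a sum of manifestly nonnegative terms and reading off from their vanishing that $Y^\alpha_i=\emptyset$ unless $\alpha=\dim W_i$, and then that each surviving $Y_i^{\dim W_i}$ is pinned down by $\bv$; in particular it exhibits the unique minimizing $\vec Y$ explicitly. You instead specialize the product in \thmref{thm:Poincare} at $t=0$ and observe that the surviving exponents $\alpha_{i_k}+\cdots+\alpha_j$ are linearly independent, which yields the count without naming the minimizer. Your route is a little more indirect, since \thmref{thm:Poincare} is itself deduced from \eqref{eq:attracting}, but it packages the combinatorics neatly; the paper's route has the small advantage of identifying the distinguished fixed point, which can be useful in other arguments.
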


\begin{proof}
  We study a fixed point $\vec{Y}$ with $\dim U_{\vec{Y}} = 0$. From
  the second equality in \eqref{eq:attracting} we have
  \begin{equation*}
    \dim U_{\vec{Y}} = \sum_{\substack{(i,\alpha)}} \sum_{j=i+1}^{n-1} \dim V^\alpha_{i;j}\dim W_{j}
  + \sum_{(i,\alpha)} \dim V^\alpha_{i;i} (\dim W_i - \alpha).
  \end{equation*}
  Suppose this vanishes. Since each term is nonnegative, both must
  vanish.
  The second term vanishes if and only if $Y^\alpha_i = \emptyset$
  unless $\alpha = \dim W_i$, as $V_{i;i}^\alpha = 0$ implies
  $V_i^\alpha = 0$.

  Next consider the first term. Let $i^+$ be the minimum among numbers
  $j$ such that $W_{j} \neq 0$ and $j > i$. If there is no such
  numbers, we set $i^+ = n$. Then the first term vanishes if and only
  if $\dim V^\alpha_{i;j} = 0$ for $j\ge i_+$. Together with the above
  consideration, we find that only $V_i^{\dim W_i}$ contributes to
  $V_j$ for $i\le j < i^+$. Thus $Y_i^{\dim W_i}$ is determined from
  $\bv$. This means the assertion.
\end{proof}

\subsection{Smallness}

We continue the setting of the previous subsection, and assume further
the following condition:
\begin{equation}\label{eq:ass}
  \dim W_1 \le \dim W_2 \le \cdots \le \dim W_n.
\end{equation}
Then \eqref{eq:attracting} implies
\begin{equation}\label{eq:est}
  \dim \sO = \max_{\vec{Y}} \dim U_{\vec{Y}} 
  \le \frac12 \sum_i \dim V_i(\dim W_i + \dim W_{i+1}) - 1
  = \frac12 \dim \Q - 1,
\end{equation}
unless $V = 0$.

\begin{Theorem}
  Under \eqref{eq:ass} the map $\pi\colon \Q \to \Q_0$ is small.
\end{Theorem}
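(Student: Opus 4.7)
The plan is to invoke the standard stratification criterion for smallness: a proper morphism is small if, for every non-open stratum $S$ of the image and a generic $x \in S$, one has $\dim S + 2\dim\pi^{-1}(x) < \dim\Q$. The stratification I use is the one from \eqref{eq:stratum}, indexed by pairs $(\bv',\Gamma)$ with $\Gamma = \{\bv'_1,\dots,\bv'_m\}$ an unordered partition of $\bv'$ into nonzero elements of $\Z_{\ge0}[I]$; the open stratum corresponds to $\bv'=0$ (so $m=0$), and for $\bv'\neq 0$ the dimension of the stratum is given by \eqref{eq:dimofstratum} and the fiber of $\pi$ over any point in it is identified in \subsecref{subsec:strat} with the product $\sO(\bv'_1,\bw)\times\cdots\times\sO(\bv'_m,\bw)$.

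Next, I would apply the estimate \eqref{eq:est}, whose hypothesis is precisely the assumption \eqref{eq:ass}, to each factor $\sO(\bv'_\alpha,\bw)$: since each $\bv'_\alpha\neq 0$, one gets
\begin{equation*}
  2\dim\sO(\bv'_\alpha,\bw) \;\le\; \dim\Q(\bv'_\alpha,\bw) - 2
  \;=\; \sum_i v'_{\alpha,i}(\dim W_i+\dim W_{i+1}) - 2.
\end{equation*}
Summing this over $\alpha = 1,\dots,m$ and adding the $m$ from the symmetric product factor in \eqref{eq:dimofstratum}, the symmetric-product contribution is absorbed into the $-2m$, leaving
\begin{equation*}
  \dim S + 2\dim\pi^{-1}(x) \;\le\; \sum_i(\dim V_i - \dim V'_i)(\dim W_i+\dim W_{i+1}) + m + \sum_i \dim V'_i(\dim W_i + \dim W_{i+1}) - 2m,
\end{equation*}
which simplifies to $\dim\Q - m$. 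Since $m\ge 1$ on any non-open stratum, this gives the required strict inequality $\dim S + 2\dim\pi^{-1}(x) < \dim\Q$.

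There is no real obstacle here: the entire argument is a bookkeeping check, the only non-trivial input being the bound \eqref{eq:est} on the central fibers. The one delicate point worth double-checking is that \eqref{eq:est} is applied with the same $\bw$ at each level of the stratification (which is legitimate, since the hypothesis \eqref{eq:ass} depends only on $\bw$) and that the $+m$ from the symmetric product exactly matches the total loss of $2m$ from applying the estimate $m$ times, so that one obtains strict, not merely semi-, smallness.
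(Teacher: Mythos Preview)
Your argument is correct and follows exactly the paper's route: the same stratification, the same fiber description, and the same application of \eqref{eq:est} to each factor $\sO(\bv'_\alpha,\bw)$ to obtain $\dim S + 2\dim\pi^{-1}(x)\le \dim\Q - m$. The one step the paper makes explicit and you leave implicit is that this very estimate forces $\Q_0^{\reg}(\bv,\bw)\neq\emptyset$ (otherwise $\Q$ would be a union of preimages of strata each of dimension $<\dim\Q$), so that $\dim\Q_0=\dim\Q$ and your inequality indeed reads as $2\dim\pi^{-1}(x)<\operatorname{codim}_{\Q_0}S$; you should state this to justify calling the $\bv'=0$ stratum ``open''.
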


This was proved in \cite{Kuz} when $\dim W_1 = \dots = \dim W_n = 1$,
and his proof works in general cases.
We give a proof for completeness.

\begin{proof}
  Since we need to treat various stratum, let us specify the dimension
  vector $\bv$, $\bw$. We consider the stratification \eqref{eq:stratum}. 

  Recall that the fiber of a point in the stratum
\(
   \Q_0^\reg(\bv-\bv',\bw) \times S^{\bv'}_\Gamma\C
\)
is isomorphic to
\(
   \sO(\bv'_1,\bw)\times\cdots\times \sO(\bv'_m,\bw).
\)
Then \eqref{eq:est} applied to $\Q(\bv'_\alpha,\bw)$ gives
\begin{equation}\label{eq:fiber}
  \dim \pi^{-1}(x) \le
  \frac12 \left( \dim \Q(\bv'_1,\bw) + \dots + \dim \Q(\bv'_m,\bw)
     \right) - m
  = \frac12 \dim \Q(\bv',\bw) - m
\end{equation}
if $x$ is a point in the stratum. We thus have
\begin{equation*}
   \dim \pi^{-1}(\Q_0^\reg(\bv-\bv',\bw) \times S^{\bv'}_\Gamma\C)
   \le \dim \Q_0^\reg(\bv-\bv',\bw) +
   \frac12 \dim \Q(\bv',\bw).
\end{equation*}
This is strictly smaller than $\dim \Q(\bv,\bw)$ unless $\bv' = 0$. It
means that $\Q_0^\reg(\bv,\bw)$ cannot be empty. Hence the dimension
of $\Q_0(\bv,\bw)$ is equal to $\dim \Q(\bv,\bw)$, and the codimension
of the stratum
\(
   \Q_0^\reg(\bv-\bv',\bw) \times S^{\bv'}_\Gamma\C
\)
is equal to
\begin{equation*}
  \dim \Q(\bv',\bw) - m.
\end{equation*}
Therefore the dimension of the fiber $\pi^{-1}(x)$ is smaller than the
half of codimension of the stratum unless $m = 0$, i.e., $\bv' = 0$.
\end{proof}

\subsection{Freeness}

We will consider equivariant homology groups of $\Q(\bv,\bw)$ and
$\sO(\bv,\bw)$ with respect to $\tG_\bw$-actions later.
We state the following:

\begin{Theorem}\label{thm:free}
  $H^{\tG_\bw}_*(\Q(\bv,\bw))$ and $H^{\tG_\bw}_*(\sO(\bv,\bw))$ are
  free of finite rank as $H^*_{\tG_\bw}(\mathrm{pt}) =
  S(\tG_\bw)$-modules.
\end{Theorem}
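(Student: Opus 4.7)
The plan is to first establish freeness over the maximal torus $\hT_\bw$ of $\tG_\bw$ using the Bialynicki-Birula cells from the preceding subsection, and then descend to $\tG_\bw$ by faithfully flat base change. With the cocharacter $\lambda\colon\C^*\to\hT_\bw$ chosen there we have
\[
  \Q(\bv,\bw)=\bigsqcup_{\vec{Y}} S_{\vec{Y}},\qquad
  \sO(\bv,\bw)=\bigsqcup_{\vec{Y}} U_{\vec{Y}},
\]
and each cell is an affine space which $\hT_\bw$-equivariantly retracts onto its fixed point via the $\lambda$-flow (the retraction is $\hT_\bw$-equivariant because $\lambda(\C^*)$ lies in the abelian group $\hT_\bw$, so each cell is $\hT_\bw$-stable). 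Consequently $H^{\hT_\bw}_*(S_{\vec{Y}})$ and $H^{\hT_\bw}_*(U_{\vec{Y}})$ are free of rank one over $S(\hT_\bw)$, concentrated in a single even degree.

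Next I would order the fixed points $\vec{Y}_1,\vec{Y}_2,\dots$ so that each $Z_k\defeq\bigsqcup_{\ell\le k}S_{\vec{Y}_\ell}$ is closed in $\Q$, which is possible by the standard Bialynicki-Birula procedure (at each stage extract a minimal closed union of cells). The long exact sequence in $\hT_\bw$-equivariant Borel-Moore homology for the pair $Z_{k-1}\subset Z_k$ has $H^{\hT_\bw}_*(S_{\vec{Y}_k})$ as the relative term; since all such terms vanish in odd degrees, the connecting maps vanish for parity reasons and induction on $k$ yields that $H^{\hT_\bw}_*(\Q)$ is a free $S(\hT_\bw)$-module of finite rank equal to the number of fixed points, with basis the fundamental classes of cell closures. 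The same argument with $U_{\vec{Y}}$ in place of $S_{\vec{Y}}$ handles $\sO$.

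To descend to $S(\tG_\bw)$-freeness, I appeal to the base-change isomorphism $H^{\tG_\bw}_*(X)\otimes_{S(\tG_\bw)}S(\hT_\bw)\cong H^{\hT_\bw}_*(X)$ available for actions of connected reductive groups, together with the fact that $S(\hT_\bw)$ is free, hence faithfully flat, of rank $|W|$ over $S(\tG_\bw)=S(\hT_\bw)^W$, where $W=\prod_i\mathfrak{S}_{\dim W_i}$ is the Weyl group of $\tG_\bw$. Freeness of $H^{\hT_\bw}_*(\Q)$ over $S(\hT_\bw)$ then descends to freeness of $H^{\tG_\bw}_*(\Q)$ over $S(\tG_\bw)$.

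The step I expect to be hardest is verifying that the partial unions $Z_k$ are genuinely closed in the non-compact variety $\Q$, which is what makes the excision sequence applicable. One justifies this using the embedding $\Q\hookrightarrow M(n,r)$ of \subsecref{subsec:Jordan} together with the projective morphism $\pi\colon\Q\to\Q_0$ and the contracting $\C^*$-action on $\Q_0$ supplied by \cite[Th.~3.5]{MR2095899}: attracting cells have well-defined closures inside $\Q$, and the parity vanishing argument then reduces to the same one used for ordinary quiver varieties in \cite{Na-alg}.
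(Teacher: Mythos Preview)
Your proposal is correct and is exactly the argument the paper has in mind: the paper's entire proof is the single sentence that this ``is a general property of a variety with torus action such that the Bialynicki-Birula decomposition gives an $\alpha$-partition,'' with a reference to \cite[\S7.1]{Na-qaff}. You have simply unpacked that reference---the ordering of cells so that partial unions are closed is precisely the $\alpha$-partition property, and the parity/excision argument you outline is the standard one. The only remark worth making is that the descent step from $\hT_\bw$ to $\tG_\bw$ is slightly cleaner if argued directly: once the cell decomposition shows $H_*(\Q)$ and $H_*(\sO)$ are concentrated in even degrees, the Serre spectral sequence for the fibration over $B\tG_\bw$ degenerates at $E_2$ for parity reasons, giving freeness over $S(\tG_\bw)$ without invoking faithfully flat descent of freeness (which, strictly speaking, only yields projectivity and then requires the graded Nakayama lemma or Quillen--Suslin to conclude).
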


This is a general property of a variety with torus action such that
Bialynicki-Birula decomposition gives an $\alpha$-partition (see e.g.,
\cite[\S7.1]{Na-qaff}).

\section{Hecke correspondences}\label{sec:Hecke}

We introduce Hecke correspondences in products of handsaw quiver
varieties. They will give generators $E_i^{(r)}$, $F_i^{(r)}$ of the
finite $W$-algebra in \secref{sec:conv}. These are given essentially
in \cite{BFFR}, but we reformulate them because 
\begin{enumerate}
\item their analogy to
Hecke correspondences \cite[\S4]{Na-alg} for ordinary quiver varieties
will become clear;

\item a modification from \cite{BFFR} seems more natural.
\end{enumerate}

\subsection{Definition}

Fix $i\in I$.
Take dimension vectors $\bw$, $\bv^1$, $\bv^2$ such that $\bv^2 =
\bv^1 + \alpha_i$, and consider the product
$\Q(\bv^1,\bw)\times\Q(\bv^2,\bw)$. We denote by $V_i^1$ (resp.\
$V_i^2$) the vector bundle $V_i\boxtimes \shfO_{\Q(\bv^2,\bw)}$
(resp.\ $\shfO_{\Q(\bv^1,\bw)}\boxtimes V_i$). We regard $B_1^p$,
$B_2^p$, $a^p$, $b^p$ ($p=1,2$) as vector bundle homomorphisms between
$V_i^p$, $W_i^p$'s.

We introduce the following variant of the complex \eqref{eq:cpx}:
\begin{equation}\label{eq:cpxHecke}
  \bL(V^2,V^1) 
  \overset{\sigma}{\longrightarrow}
     \begin{matrix} \bE(V^2, V^1) \oplus \bL(V^2,V^1) \\ 
                        \oplus \\
                       \bL(W, V^1) \oplus \bE(V^2, W)
                       \end{matrix}
   \overset{\tau}{\longrightarrow} \bE(V^2,V^1),
\end{equation}
where $\sigma$ and $\tau$ are defined by
\begin{equation*}
   \sigma(\xi) = \begin{pmatrix} \xi B^2_1 - B^1_1 \xi \\
                            \xi B^2_2 - B^1_2 \xi \\
                            \xi a^2 \\
                            - b^1\xi \end{pmatrix}, \quad
   \tau \begin{pmatrix} C_1 \\ C_2 \\ A \\ B \end{pmatrix}
        = B^1_1C_2 - C_2 B^2_1 + C_1B^2_2 - B^1_2 C_1 + a^1B + Ab^2.
\end{equation*}
This is a complex thanks to $\mu = 0$, and by the same argument as in
\cite[5.2]{Na-alg}, $\sigma$ is injective and $\tau$ is surjective,
both pointwise on $\Q(\bv^1,\bw)\times\Q(\bv^2,\bw)$. Therefore
$\Ker\tau/\Ima\sigma$ is a vector bundle of rank
\begin{equation*}
  \dim \bL(W,V^1) + \dim \bE(V^2,W) = 
   \sum_j \left(\dim V^1_j \dim W_j + \dim V^2_j \dim W_{j+1}\right).
\end{equation*}
We define its section $s$ by
\begin{equation*}
  s = (0\oplus 0 \oplus a^1 \oplus - b^2)\mod \Ima\sigma.
\end{equation*}

Let $Z(s)$ denote the zero set of $s$. Then there exists
$\xi\in\bL(V^2,V^1)$ such that
\begin{equation*}
  \xi B_1^2 = B_1^1 \xi, \quad
  \xi B_2^2 = B_2^1 \xi, \quad
  \xi a^2 = a^1, \quad b^2 = b^1\xi.
\end{equation*}
The stability condition implies that $\xi$ is surjective. Then
$(B^1_1,B_2^1,a^1,b^1)$ is isomorphic to data on the quotient
$V^2/\Ker\xi$ induced from $(B^2_1, B^2_2, a^2, b^2)$.

Since $\bv^2 = \bv^1 + \alpha_i$, $\Ker\xi$ must be $1$-dimensional on
$i$ and $0$ otherwise. It is preserved by $B_2^2$ by one of the above
equations. Thus $B_2^2|\Ker\xi$ is a scalar. The decomposition $\Q =
\Q^0\times \C$ induces a decomposition $Z(s) =
\Pa_i(\bv^2,\bw)\times\C$, where $\Pa_i(\bv^2,\bw)$ is defined by
$B_2^2|\Ker\xi = 0$.

Again by the same argument as in \cite[5.7]{Na-alg}, the
differential $\nabla s$ is surjective on $Z(s)$. Therefore $Z(s)$ and
hence $\Pa_i(\bv^2,\bw)$ is a nonsingular subvariety.
\begin{NB}
Its codimension in $\Q(\bv^1,\bw)\times \Q(\bv^2,\bw)$ is
\begin{equation*}
  \sum_j \left(\dim V^1_j \dim W_j + \dim V^2_j \dim W_{j+1}\right) + 1.
\end{equation*}
We have
\begin{multline*}
  \sum_j \left(\dim V^1_j \dim W_j + \dim V^2_j \dim W_{j+1}\right) + 1
  - \frac12 \left( \dim \Q(\bv^1,\bw) + \dim \Q(\bv^2,\bw) \right)
\\
  =  \frac12 \left( \dim W_{i+1} - \dim W_i\right) + 1.
\end{multline*}
This explain the reason why $\dim W_{i+1} - \dim W_i$ appears, but
instead of dividing $\dim W_{i+1} - \dim W_i$ by two, we shift by
$s_{i+1,i}$ and $s_{i,i+1}$ (recall $s_{i+1,i}+s_{i,i+1} = \dim
W_{i+1} - \dim W_i$).
\end{NB}

Note that the construction is $\tG_\bw$-equivariant. In particular,
$\Pa_i(\bv^2,\bw)$ has an induced $\tG_\bw$-equivariant structure.

Remark only $Z(s)$ was considered in \cite{BFFR}.

\subsection{Line bundle}

From the construction in the previous section, we have a natural line
bundle $\mathcal L_i$ over $\Pa_i(\bv^2,\bw)$ given by $\Ker\xi$. This
is a $\tG_\bw$-equivariant line bundle. 
Let us denote the pull-back of the character $q$ under the projection
$\tG_\bw \to \C^*$ also by $q$ for brevity. We twist $\mathcal L_i$ by
as
\begin{equation*}
  \mathcal L'_i \defeq q^{1-i}\mathcal L_i.
\end{equation*}

\begin{NB}
\subsection{tautological complex}

\begin{equation*}
  V_{i-1}
  \overset{\sigma}{\longrightarrow}
     V_{i} \oplus V_{i-1} \oplus W_{i}
   \overset{\tau}{\longrightarrow} V_{i},
\end{equation*}
where $\sigma$ and $\tau$ are defined by
\begin{equation*}
   \sigma = \begin{pmatrix} B_1 \\
                            B_2 \\
                            b \end{pmatrix}, \quad
   \tau = \begin{pmatrix} -B_2 & B_1 & a \end{pmatrix}.
\end{equation*}
\end{NB}

\section{Convolution algebra}\label{sec:conv}

We fix a $\tI$-graded vector space $W$ with dimension vector $\bw$
throughout this section. We also choose a pyramid $\pi$ so that its
row lengths $p_1$, \dots, $p_n$ are given by $\dim W_1$, \dots, $\dim
W_n$. Hence we assume the condition \eqref{eq:ass}.

In \cite{BFFR} a $W^\hbar(\pi)$-module structure was constructed on
\begin{equation*}
  \bigoplus_\bv H^{\tG_\bw}_*(\Q(\bv,\bw))
  \otimes_{S(\tG_\bw)} \mathcal S(\tG_\bw),
\end{equation*}
the direct sum of the localized equivariant cohomology group of Laumon
spaces. Here the variable $\hbar$ for the Rees algebra is identified with
the generator $\hbar$ of $\C[\hbar] = H^*_{\C^*}(\mathrm{pt})$.
More precisely the Rees algebra with respect to the shifted Kazhdan
filtration.

In this section we slightly modify this result working on
non-localized equivariant homology group and using the framework in
\cite[\S8]{Na-alg}, \cite[\S9]{Na-qaff}. The author believes that this
version is more natural, as
\begin{enumerate}
\item it fits with the theory of a convolution algebra,
\item it gives the Rees algebra for the Kazhdan filtration, instead of
  the shifted one.
\end{enumerate}

\subsection{Closed embedding}

Suppose that $V^1$ is an $I$-graded vector space and $V^2$ is its
$I$-graded subspace. We consider $\bM(\bv^1,\bw)$, $\bM(\bv^2,\bw)$ as
in \subsecref{subsec:def}, where $\bv^1$, $\bv^2$, $\bw$ are dimension
vectors of $V^1$, $V^2$, $W$ as before. We embed $\bM(\bv^2,\bw)$ into
$\bM(\bv^1,\bw)$ by choosing a complementary subspace of $V^2$ in
$V^1$. It induces a closed embedding
\[
  \Q_0(\bv^2,\bw) \to \Q_0(\bv^1,\bw),
\]
as in \cite[Lemma~2.5.3]{Na-qaff}. It is independent of the choice of
the complementary subspace.

\subsection{A Steinberg type variety}

Suppose that two dimension vectors $\bv^1$, $\bv^2$ are given. We have
closed embeddings $\Q_0(\bv^1, \bw)$, $\Q_0(\bv^2,\bw)\hookrightarrow
\Q_0(\bv^1+\bv^2,\bw)$. Then we introduce the following variety, as
analog of Steinberg variety defined as in \cite[\S7]{Na-alg}:
\begin{Definition}
Let us consider $\pi$ as a morphism from $\Q(\bv^a,\bw)$ to
$\Q_0(\bv^1+\bv^2,\bw)$ and introduce the fiber product
\begin{equation*}
    Z(\bv^1,\bv^2,\bw)
    \defeq \{ (x^1,x^2)\in \Q(\bv^1,\bw)\times\Q(\bv^2,\bw) \mid
    \pi(x^1) = \pi(x^2)\}.
\end{equation*}
This is called a {\it Steinberg type variety}.
\end{Definition}

Take $([B^1_1,B^1_2,a^1,b^1], [B^2_1,B^2_2,a^2,b^2])\in
\Pa_i(\bv^2,\bw)$. Then $\pi([B^1_1,B^1_2,a^1,b^1])$,
$\pi([B^2_1,B^2_2,a^2,b^2])$ are the same point. However if we only
suppose $([B^1_1,B^1_2,a^1,b^1], [B^2_1,B^2_2,a^2,b^2])\in Z(s)$, then
traces of powers of $B_2^2$ may differ from those of $B_2^1$ as
$B_2^2|\Ker\xi$ possibly is nonzero. This gives a reason why
$\Pa_i(\bv^2,\bw)$ is more natural.

\subsection{Convolution algebra}

We now consider three dimension vectors $\bv^1$, $\bv^2$, $\bv^3$ and
the triple product
$\Q(\bv^1,\bw)\times\Q(\bv^2,\bw)\times\Q(\bv^3,\bw)$. The projection
to the product $\Q(\bv^a,\bw)\times\Q(\bv^b,\bw)$ of the
$a^{\mathrm{th}}$ and $b^{\mathrm{th}}$ factors is denoted by $p_{ab}$.

The map 
\begin{equation*}
  p_{13}\colon p_{12}^{-1}Z(\bv^1,\bv^2,\bw)\cap 
               p_{23}^{-1}Z(\bv^2,\bv^3,\bw)\to
               \Q(\bv^1,\bw)\times \Q(\bv^3,\bw)
\end{equation*}
is proper, and its image is contained in $Z(\bv^1,\bv^3,\bw)$. Hence
we can define the convolution product
\begin{equation*}
  H^{\tG_\bw}_*(Z(\bv^1,\bv^2,\bw))\otimes
  H^{\tG_\bw}_*(Z(\bv^2,\bv^3,\bw))
  \to
  H^{\tG_\bw}_*(Z(\bv^1,\bv^3,\bw)).
\end{equation*}

Taking direct sum over all $\bv^1$, $\bv^2$, we have an algebra
structure on $\bigoplus H^{\tG_\bw}_*(Z(\bv^1,\bv^2,\bw))$. However,
this does not contain the unit, which is the direct {\it product\/}
$\prod_{\bv} [\Delta\Q(\bv,\bw)]$ of the classes of diagonals
$\Delta\Q(\bv,\bw) \subset Z(\bv,\bv,\bw)$. It is practically no harm
at all, as we can consider $[\Delta\Q(\bv,\bw)]$ as a collection of
idempotents. But it does not fit with the usual convention for the
$W$-algebra, which has unit.
Therefore as in \cite[\S9.1]{Na-qaff}, we introduce a slightly larger
space $\prod' H^{\tG_\bw}_*(Z(\bv^1,\bv^2,\bw))$, consisting of
elements $(F_{\bv^1,\bv^2})$ of $\prod
H^{\tG_\bw}_*(Z(\bv^1,\bv^2,\bw))$ such that $F_{\bv^1,\bv^2} = 0$ for
all but finitely many choices of $\bv^2$ for fixed $\bv^1$ and the
same is true for $1$ and $2$ exchanged. Then $\prod'
H^{\tG_\bw}_*(Z(\bv^1,\bv^2,\bw))$ is an algebra with unit.

Let $Z(\bw)$ denote the disjoint union $\bigsqcup Z(\bv^1,\bv^2,\bw)$
and define $H^{\tG_\bw}_*(Z(\bw))$ as the above space
$\prod' H^{\tG_\bw}_*(Z(\bv^1,\bv^2,\bw))$.

We also denote by $\Q(\bw)$ (resp.\ $\sO(\bw)$) the disjoint union
$\bigsqcup \Q(\bv,\bw)$ (resp.\ $\sO(\bv,\bw)$). For these we set
$H^{\tG_\bw}_*(\Q(\bw)) = \bigoplus H^{\tG_\bw}_*(\Q(\bv,\bw))$ and
$H^{\tG_\bw}_*(\sO(\bw)) = \bigoplus
H^{\tG_\bw}_*(\sO(\bv,\bw))$. These are modules of
$H^{\tG_\bw}_*(Z(\bw))$.

\subsection{Image of generators}\label{subsec:image}

The rest of this section is devoted to a construction of a
homomorphism $\rho$ from the finite $W$-algebra $W^\hbar(\pi)$ to the
convolution algebra $H^{\tG_\bw}_*(Z(\bw))$. We first define the
assignment on generators $E_i^{(r)}$, $F_i^{(r)}$, $D_i^{(r)}$ in this
subsection.

We set
\begin{equation*}
  \begin{split}
    \rho(E_i^{(r)}) &\defeq \prod_{\bv} c_1(\mathcal L'_i)^{r-s_{i,i+1}-1}\cap
    [{}^{\omega}\!\Pa_i(\bv,\bw)],
    \\
    \rho(F_i^{(r)}) &\defeq - \prod_{\bv} c_1(\mathcal L'_i)^{r-s_{i+1,i}-1}\cap
    [\Pa_i(\bv,\bw)].
  \end{split}
\end{equation*}
Note that these are different from ones in \cite[\S4]{BFFR}, where
$Z(s)$ in \secref{sec:Hecke} was used instead of
$\Pa_i(\bv,\bw)$. More precisely, the above are multiplied by $\hbar$
from the original ones, which are {\it not\/} contained in
$H^{\tG_\bw}_*(Z(\bw))$.

To define the image of $D_i^{(r)}$, we recall generating functions
$D_i(u)$, $C_i(u)$ and $Z_N(u)$ in \eqref{eq:genD}, and 
${\mathsf A}_i(u)$ in \eqref{eq:sfA}.

For $i\in\tI$ let $\underline{\mathcal E}_i$ be the universal bundle
over $\Q\times\proj^1$ given by considering $\Q$ as the moduli space
of flags 
\(
  0 = E_0 \subset E_1 \subset \cdots \subset E_{n-1} 
  \subset E_n = W\otimes \shfO_{\proj^1}
\)
of locally free sheaves on $\proj^1$ as in \secref{sec:Laumon}.
(This is denoted by $\underline{\mathcal W}_i$ in \cite{BFFR}.)
Recall that it is given by $\Ker\alpha_i$ in the quiver description,
where $\alpha_i$ was defined in \eqref{eq:alpha}.

We modify $\alpha_i$ to a homomorphism on $\Q$ as
\begin{equation*}
  \begin{gathered}
  \bar\alpha_i \colon W_1\oplus\cdots\oplus W_i\oplus q^{-1}V_i
  \to V_i
\\
  \bar\alpha_i \defeq (B_1^{i-1}a, \dots, a, - B_2),\qquad   
  \end{gathered}
\end{equation*}
and define $\mathcal E_i$ as $\Ker\bar\alpha_i$. Here $q^{-1}$ is
introduced so that $\bar\alpha_i$ is $\tG_\bw$-equivariant. Let
\begin{equation*}
  c_{1/u}(\mathcal E_i) = \sum_{r=0}^\infty u^{-r} c_r(\mathcal E_i)
\end{equation*}
be the generating function of Chern classes of $\mathcal E_i$.
We define
\begin{equation*}
  \rho(\mathsf A_i(u)) \defeq u^{\rank \mathcal E_i} c_{1/u}(\mathcal E_i),
\end{equation*}
where $\mathsf A_i(u)$ is the Rees algebra version of the original
$\mathsf A_i(u)$, which means that we replace various $u-k$ by $u -
k\hbar$.

This is again slightly different from one in \cite[\S4]{BFFR}. Our
definition gives
\(
   \prod_{j = 1}^i \prod_{a=1}^{p_j} ( u - \mathsf p_{ij}^{(a)})
\)
instead of
\(
   \prod_{j=1}^i \prod_{a=1}^{p_j} ( u - \hbar^{-1}\mathsf p_{ij}^{(a)})
\)
as the eigenvalue on an element corresponding to the $\hT_{\bw}$-fixed
point, where $\mathsf p_{ij}^{(a)}$ is defined in \cite[(3.15)]{BFFR}.

Consider the last operator $Z_N(u) = A_n(u)$ in the above
definition. Since $\mathcal E_n = W_1\oplus \cdots\oplus W_n$ is a
trivial vector bundle of rank $N$ over $\Q$ (with a nontrivial
$\tG_\bw$-equivariant structure), we have $\rho(Z_N(u))\in
H^*_{\tG_\bw}(\mathrm{pt})[u] = S(\tG_\bw)[u]$. (More precisely, it is
a polynomial of degree $N$ whose leading term is $u^N$.)
This will be compatible with the fact that the coefficients of
$Z_N(u)$ generate the center $Z(W(\pi))$ of $W(\pi)$.

\subsection{A homomorphism from the $W$-algebra to the convolution
  algebra}

The convolution algebra $H^{\tG_\bw}_*(Z(\bw))$ is an algebra over
$H_{\tG_\bw}^*(\mathrm{pt}) = S(\tG_\bw)$. Let
$H^{\tG_\bw}_*(Z(\bw))/\mathrm{tor}$ be the image of
$H^{\tG_\bw}_*(Z(\bw))$ in $H^{\tG_\bw}_*(Z(\bw))\otimes_{S(\tG_\bw)}
\mathcal S(\tG_\bw)$.

\begin{Theorem}\label{thm:BFFR}
The assignment $\rho$ induces an algebra homomorphism
\begin{equation*}
  \rho\colon W^{\hbar}(\pi)\otimes_{\C[\hbar]}\C[\hbar,\hbar^{-1}]
  \to \left(H^{\tG_\bw}_*(Z(\bw))/\mathrm{tor}\right)
  \otimes_{\C[\hbar]}\C[\hbar,\hbar^{-1}].
\end{equation*}
Moreover the center $Z(W^{\hbar}(\pi))$ of $W^\hbar(\pi)$ is mapped
isomorphically to $S(\tG_\bw) = H^*_{\tG_\bw}(\mathrm{pt})$ under
$\rho$.
\end{Theorem}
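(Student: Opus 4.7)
The plan is to leverage the construction of \cite{BFFR}, which treats almost the same situation but with the subvariety $Z(s)$ in place of our $\Pa_i(\bv,\bw)$ and with the shifted Kazhdan filtration; the modifications needed are essentially bookkeeping of $\hbar$-factors together with one geometric check relating the two subvarieties.

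\textbf{Step 1 (geometry of the correspondence).} The construction in \secref{sec:Hecke} gives the decomposition $Z(s) = \Pa_i(\bv^2,\bw)\times\C$, where the extra $\C$ is the translation direction for $B_2|_{\Ker\xi}$. Since this $\C$ carries weight one with respect to the scalar $\C^*\subset \tG_\bw$, the equivariant fundamental classes satisfy $[Z(s)] = \hbar^{-1}[\Pa_i(\bv^2,\bw)]$ in $H^{\tG_\bw}_*(\,\cdot\,)\otimes_{\C[\hbar]}\C[\hbar,\hbar^{-1}]$. The condition $B_2|_{\Ker\xi}=0$ imposed to cut out $\Pa_i(\bv^2,\bw)$ equalizes $\tr(B_2^1)^k$ and $\tr(B_2^2)^k$ for all $k$, so $\Pa_i(\bv^2,\bw)\subset Z(\bv^1,\bv^2,\bw)$ is genuinely a class in the Steinberg variety (this is the sharpening quoted in \subsecref{subsec:image} that fails for $Z(s)$).

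\textbf{Step 2 (matching with BFFR's operators).} After inverting $\hbar$, the assignment $\rho(E_i^{(r)})$, $\rho(F_i^{(r)})$ of \subsecref{subsec:image} coincides, up to an overall factor of $\hbar$ coming from Step 1, with the convolution by $c_1(\mathcal L_i')^{r-s_{i,i+1}-1}\cap [Z(s)]$ used in \cite[\S4]{BFFR}. The twist $\mathcal L_i' = q^{1-i}\mathcal L_i$ matches their shift $u\mapsto u-(i-1)\hbar$ inside $\mathsf A_i(u)$, and the definition of $\rho(\mathsf A_i(u))$ as $u^{\rank\mathcal E_i}c_{1/u}(\mathcal E_i)$ is identical to their formula once we replace $\hbar^{-1}\mathsf p_{ij}^{(a)}$ by $\mathsf p_{ij}^{(a)}$. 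This renormalization is exactly the passage between the shifted and unshifted Kazhdan filtration on $Y_n(\sigma)$, which is the content of the remark following \eqref{eq:root}.

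\textbf{Step 3 (defining relations).} By the computations of \cite[\S4]{BFFR}, their operators satisfy the relations of $Y^{\hbar}_n(\sigma)$ for the shifted filtration on the localized convolution algebra $\bigoplus H^{\tG_\bw}_*(\Q(\bv,\bw))\otimes_{S(\tG_\bw)}\mathcal S(\tG_\bw)$. Rescaling each generator by the appropriate power of $\hbar$ as in Step 2 translates these into the $Y^{\hbar}_n(\sigma)$-relations in our normalization, and all of them make sense after only inverting $\hbar$ (not the whole of $S(\tG_\bw)$) because our operators live in the integral convolution algebra $H^{\tG_\bw}_*(Z(\bw))$ thanks to Step 1. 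Factoring through $W^{\hbar}(\pi)$ reduces to verifying $\rho(D_1^{(r)})=0$ for $r>p_1$; but $\mathcal E_1 = W_1\otimes\shfO$ is the trivial bundle of rank $p_1$, hence $c_r(\mathcal E_1)=0$ for $r>p_1$, and under the generating function identity $\mathsf A_1(u) = D_1(u) u^{p_1}$ this is exactly the required vanishing.

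\textbf{Step 4 (the center).} Since $\mathcal E_n = \bigoplus_{i} W_i$ is a rank-$N$ trivial bundle over $\Q(\bw)$ (with its natural $\tG_\bw$-action),
\begin{equation*}
  \rho(Z_N(u)) = \rho(\mathsf A_n(u)) = u^N\, c_{1/u}^{\tG_\bw}(W)
  \in S(\tG_\bw)[u],
\end{equation*}
is a monic polynomial of degree $N$ in $u$ whose coefficients are the equivariant Chern classes of $W$. These Chern classes are the elementary symmetric functions in the weights of the $W_i$, which together with $\hbar$ freely generate $S(\tG_\bw) = \C[\hbar]\otimes\bigotimes_i \C[x_1^{(i)},\dots,x_{p_i}^{(i)}]^{S_{p_i}}$. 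Thus $\rho$ maps the $N$ polynomial generators of $Z(W^{\hbar}(\pi))$, namely the coefficients $Z_N^{(1)},\dots,Z_N^{(N)}$ of $Z_N(u)$, together with $\hbar$, to an algebraically independent set of generators of $S(\tG_\bw)$, and surjects onto it.

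\textbf{Main obstacle.} The delicate point is Step 3: one must trace through BFFR's computation of the Drinfeld-Serre relations and show that the replacement of $Z(s)$ by $\Pa_i$, together with the corresponding $\hbar$-renormalization, is consistent with \emph{every} defining relation of $Y^{\hbar}_n(\sigma)$ in \subsecref{subsec:rel}. The compatibility with the Kazhdan (as opposed to shifted Kazhdan) filtration is what makes the operators land in the non-localized convolution algebra, and this is precisely the subtlety flagged in the Remark after \eqref{eq:root} concerning \cite[(3.2)]{BFFR}; checking it boils down to a careful degree count of the correspondence classes $c_1(\mathcal L_i')^{r-s_{i,i+1}-1}\cap[\Pa_i(\bv,\bw)]$ against the Kazhdan degrees of $E_i^{(r)}$.
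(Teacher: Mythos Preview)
Your overall strategy coincides with the paper's: both reduce the verification of the defining relations to the computation already carried out in \cite{BFFR}, using that $H^{\tG_\bw}_*(Z(\bw))/\mathrm{tor}$ injects into its localization. The paper makes this reduction slightly more explicit by (i) first reducing to a left-justified pyramid via the isomorphism in \cite[\S3.5]{BruKle}, since \cite{BFFR} only treats that case, and (ii) passing from $\tG_\bw$ to the maximal torus $\hT_\bw$ and then localizing to the isolated fixed points, so that the relations are checked in the faithful module $H_*(\Q(\bw)^{\hT_\bw})\otimes_\C \mathcal S(\hT_\bw)$; your Step~3 leaves both of these implicit. Your Step~1 relating $[Z(s)]$ and $[\Pa_i]$ by an $\hbar$-factor is a nice way to package the modification, and is exactly what the paper summarizes in the remark that the new generators differ from those of \cite{BFFR} by a factor of $\hbar$.

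Your Step~4 contains a genuine error. The coefficients of $u^N c^{\tG_\bw}_{1/u}(W)=\prod_{i,a}(u-x_a^{(i)})$ are the elementary symmetric functions in \emph{all} the weights $x_a^{(i)}$ simultaneously, so together with $\hbar$ they generate only $\C[\hbar]\otimes\C[x]^{S_N}$, not the larger ring $S(\tG_\bw)=\C[\hbar]\otimes\bigotimes_i \C[x^{(i)}]^{S_{p_i}}$. For example, with $n=2$ and $p_1=p_2=1$ the element $x_1^{(1)}-x_1^{(2)}\in S(\tG_\bw)$ is not in the image, even after inverting $\hbar$. Thus your surjectivity claim fails. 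The paper's own treatment of this clause is only the sentence preceding the theorem, namely that $\rho(Z_N(u))\in S(\tG_\bw)[u]$; what is actually used later in \secref{sec:std} is the direct assignment of a semisimple $s\in\operatorname{Lie}G_\bw$ to a Drinfeld polynomial ${P}$ (via the characteristic polynomials of the blocks), not a literal isomorphism of the center with $S(\tG_\bw)$.
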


The last assertion has been explained just before.
The first statement follows from the proof of the main result in
\cite{BFFR}. Let us explain how one should modify it.

First note that the pyramid is {\it left-justified\/} (or equivalently
$s_{i+1,i} = 0$ for all $i$) in \cite{BFFR}, while ours is
arbitrary. This is not essential as there is an explicit algebra
isomorphism $W(\pi) \cong W(\dot\pi)$ for two pyramids with the same
row length \cite[\S3.5]{BruKle}. Thus we may assume $\pi$ is
left-justified.

Next we check that the image of generators $E_i^{(s)}$, $F_i^{(s)}$,
$D_i^{(s)}$ satisfy the defining relation in
\subsecref{subsec:rel}.
In $H^{\tG_\bw}_*(Z(\bw))$, we replace the group $\tG_{\bw}$ by its
maximal torus $\hT_{\bw}$ and take the tensor product with $\mathcal
S(\hT_{\bw})$.
Then the target of $\rho$ is replaced by
\(
  H^{\hT_{\bw}}_*(Z(\bw)) \otimes_{S(\hT_{\bw})} \mathcal S(\hT_{\bw}).
\)
Note that this process kills the torsion part of
$H^{\tG_{\bw}}_*(Z(\bw))$, but is injective on
$H^{\tG_{\bw}}_*(Z(\bw))/\mathrm{tor}$.
By the localization theorem in the equivariant homology it is
isomorphic to
\(
  H_*(Z(\bw)^{\hT_{\bw}}) \otimes_{\C} \mathcal S(\hT_{\bw}).
\)
From the analysis in \secref{sec:fixed}, we have
\(
    Z(\bw)^{\hT_{\bw}} = \Q(\bw)^{\hT_{\bw}}\times \Q(\bw)^{\hT_{\bw}}.
\)
Now it is enough to check the relation in the faithful representation
$H_*(\Q(\bw)^{\hT_{\bw}})\otimes_\C \mathcal S(\hT_{\bw})$. This is
exactly the calculation done in \cite{BFFR} except that we need to
check that our modification of generators gives the Rees algebra with
respect to the Kazhdan's filtration, not the shifted one.

Finally higher root vectors $E_{i,j}^{(r)}$, $F_{i,j}^{(r)}$ are
generated by generators if we invert $\hbar$ (see
\eqref{eq:root}). Therefore we got the assertion.

The author conjectures that $H_*^{\tG_{\bw}}(Z(\bw))$ is torsion free
and $\rho$ gives an isomorphism
\begin{equation*}
  W^\hbar(\pi)\to H_*^{\tG_{\bw}}(Z(\bw)).
\end{equation*}

As a first step towards this conjecture, it should be possible to show
that $\rho$ gives a homomorphism $W^\hbar(\pi)\to
H^{\tG_\bw}_*(Z(\bw))/\mathrm{tor}$, i.e., images of higher root
vectors are in $H^{\tG_\bw}_*(Z(\bw))/\mathrm{tor}$. This is related
to the commutativity of products on $H^{\tG_\bw}_*(Z(\bw))$ at $\hbar
= 0$, as $W^\hbar(\pi)$ is specialized to the coordinate ring of the
Slodowy slice. The author plans to come back to this problem in
future.

\section{Standard modules = Verma modules}\label{sec:std}

In this section, we identify Verma modules with natural modules of the
convolution algebra $H^{\tG_\bw}_*(Z(\bw))/\mathrm{tor}$, called {\it
  standard modules}. Remark that the same terminology was used for a
{\it different\/} class of modules in \cite{BruKle}.


\subsection{Standard modules}\label{subsec:std}

Let $M({P})$ be a Verma module of $W(\pi)$ with the Drinfeld
polynomial ${P}$. The center $Z(W(\pi))$ acts on $M({P})$ via
the character $\chi\colon Z(W(\pi))\to \C$ given by
\begin{equation*}
  \chi(Z_N(u)) = P_1(u) P_2(u) \cdots P_n(u).
\end{equation*}
According to the isomorphism $S(\tG_\bw) = H^*_{\tG_\bw}(\mathrm{pt})
\cong Z(W^\hbar(\pi))$, $\chi$ corresponds to a conjugacy class of a
semisimple element $s$ in $\operatorname{Lie} G_\bw$. We consider $s$
as an element in $\operatorname{Lie} \tG_\bw$, putting $\hbar = 1$ in
the $\C^*$-component.
In concrete terms, the characteristic polynomial of the
$\GL(W_i)$-component of $s$ is $P_i(u)$.
Let $\C_s$ be the $S(\tG_\bw)$-module corresponding to $s$.

Let $\Q(\bw)^s$, $Z(\bw)^s$ denote the $s$-fixed point subvariety in
$\Q(\bw)$ and $Z(\bw)$ respectively. 
Here the $s$-fixed point subvariety is the zero locus of the vector
field corresponding to $s$, or equivalently the subvariety fixed by
$\exp(ts)$ for all $t\in\R$.
We use the corresponding notation for other handsaw quiver varieties
also.
The localization theorem of equivariant homology groups (see e.g.,
\cite{Atiyah-Bott-momentmap}) implies the restriction homomorphism
with support in $\Q(\bw)^s\times\Q(\bw)^s$
\begin{equation*}
  H_*^{\tG_\bw}(Z(\bw))\otimes_{S(\tG_\bw)} \C_s
  \to H_*(Z(\bw)^s)
\end{equation*}
is an isomorphism. Moreover if we correct the homomorphism by the
Euler class of the normal bundle of the second factor of
$\Q(\bw)^s\times\Q(\bw)^s$, it becomes an algebra isomorphism,
as in \cite[5.11.10]{CG}.

Similarly we have an isomorphism
\begin{equation*}
  H^{\tG_\bw}_*(\sO(\bw))\otimes_{S(\tG_\bw)} \C_s
  \to H_*(\sO(\bw)^s),
\end{equation*}
which is compatible with the convolution product. Here $\sO(\bw)^s$ is
the $s$-fixed point subvariety in $\sO(\bw)$. 
More generally if we take $x\in \Q_0(\bw)^s$ and its inverse image
$\Q(\bw)^s_x\defeq \pi^{-1}(x)\cap \Q(\bw)^s$, 
\begin{equation*}
  H^{\tG_\bw}_*(\Q(\bw)^s_x)\otimes_{S(\tG_\bw)} \C_s
  \to H_*(\Q(\bw)^s_x)
\end{equation*}
is also compatible with the convolution product.
The previous $\sO(\bw)^s$ is the special case $x = 0$.
Following \cite[\S13]{Na-qaff} we call these {\it standard modules}.

\begin{Remark}
  As we mentioned in the introduction, the finite AGT conjecture is
  formulated for arbitary compact Lie group $K$. However, the only
  space corresponding to $\Q_0(\bw)$ can be defined, and its
  resolution $\Q(\bw)$ cannot be generalized. The convolution algebra
  can be defined as an $\Ext$-algebra of the sum of the
  $\tG_{\bw}$-equivariant intersection cohomology complexes of
  $\Q^\reg_0(\bv,\bw)$ with various $\bv$. Similarly the convolution
  algebra of the fixed point subvariety $\Q^\bullet_0(\bw^\bullet)$
  can be defined.
  However the `correction term' given by the Euler class of the normal
  bundle does not make sense. The author does {\it not\/} know how to
  define a homomorphism from the specialization of the convolution
  algebra to the convolution algebra of the fixed point set.
\end{Remark}

\subsection{Graded handsaw quiver varieties}\label{subsec:gr}

We study the $s$-fixed subvariety $\Q(\bw)^s$ in this subsection.  We
call $\Q(\bw)^s$ the {\it graded handsaw quiver variety\/} in analogy
with graded quiver varieties studied in \cite{Na-qaff,MR2144973}.

Let $x\in\Q(\bw)^s$ and take its representative $(B_1,B_2,a,b)$.
As in the discussion in \subsecref{subsec:Jordan} the point $x$ is
fixed by $s$ if and only if there exists a semisimple element $\xi\in
\operatorname{Lie} G = \bigoplus_{i\in I} \gl(V_i)$ such that
\begin{equation*}
  [B_1,\xi] = 0, \quad
  [B_2,\xi] = B_2, \quad
  -\xi a = -a s, \quad b\xi = (s + 1) b.
\end{equation*}
We decompose $V$, $W$ according to eigenvalues of $\xi$ and $s$
respectively:
\begin{equation*}
  V = \bigoplus V(m), \quad W = \bigoplus W(m).
\end{equation*}
Then the above equations mean that $B_1$, $a$ preserve eigenvalues
while $B_2$, $b$ decrease them by $1$. Combining the condition with
the $I$-grading, we thus have
\begin{gather*}
  B_1(V_i(m)) \subset V_{i+1}(m), \quad B_2(V_i(m))\subset V_i(m-1),
\\
  a (W_i(m)) \subset V_i(m), \quad b(V_i(m))\subset W_{i+1}(m-1).
\end{gather*}

We define {\it graded\/} dimension vectors
\begin{equation*}
  \bv^\bullet = (\dim V_i(m))_{i\in I,m\in\C} \in\Z_{\ge 0}^{I\times\C}, \quad
  \bw^\bullet = (\dim W_i(m))_{i\in \tI,m\in\C} \in\Z_{\ge 0}^{\tI\times\C}.
\end{equation*}
The conjugacy class of $s$ is determined by $\bw^\bullet$, and vice versa.
The value $\bv^\bullet$ is constant on each connected component of $\Q(\bw)^s$.
We denote by $\Q^\bullet(\bv^\bullet,\bw^\bullet)$ the union of components of
$\Q(\bw)^s$ with given $\bv^\bullet$.
We denote by $\bv$, $\bw$ the corresponding vectors
\begin{equation*}
  (\sum_m \dim V_i(m))_{i\in I} \in\Z_{\ge 0}^{I}, \quad
  (\sum_m \dim W_i(m))_{i\in \tI} \in\Z_{\ge 0}^{\tI}
\end{equation*}
respectively.
We change the notation $\Q(\bw)^s$ to $\Q^\bullet(\bw^\bullet)$
accordingly hereafter.

Let us observe that we can group $V(m)$ according to $m\bmod \Z$, and
components of $(B_1,B_2,a,b)$ between different groups vanish. This
means $\Q^\bullet(\bw^\bullet)$ decomposes into a product of smaller
varieties $\Q^\bullet(\bw^{\prime\bullet})$,
$\Q^\bullet(\bw^{\prime\prime\bullet})$, \dots. Thus we may assume all
eigenvalues of $s$ are integers. Then eigenvalues of $\xi$ are also
integers.
This corresponds to the linkage principle \cite[\S6.3]{BruKle}.

\begin{NB}
Let us rename the graded pieces as
\begin{equation*}
  V'_j(m) \defeq V_{j-m}(m), \quad W'_j(m) \defeq W_{j-m}(m).
\end{equation*}
Then the above implies
\begin{gather*}
  B_1(V'_j(m)) \subset V'_{j+1}(m),\quad
  B_2(V'_j(m)) \subset V'_{j-1}(m-1)
\\
  a (W'_j(m)) \subset V'_j(m), \quad
  b (V'_j(m)) \subset W'_j(m-1).
\end{gather*}
Then we further need to renumber as
\end{NB}%
Let us rename the graded pieces as
\begin{equation}\label{eq:trans}
  V'_j(k) \defeq V_{\frac{j-k}2}(\frac{j+k}2), \quad
  W'_j(k) \defeq W_{\frac{j-k+1}2}(\frac{j+k-1}2).
\end{equation}
\begin{NB}
The inverse is given by
\begin{equation*}
  V_i(m) = V'_{i+m}(m-i), \quad W_i(m) = W'_{i+m}(m-i+1).
\end{equation*}
\end{NB}%
Then the above implies
\begin{gather*}
  B_1(V'_j(k)) \subset V'_{j+1}(k-1),\quad
  B_2(V'_j(k)) \subset V'_{j-1}(k-1),
\\
  a (W'_j(k)) \subset V'_j(k-1), \quad
  b (V'_j(k)) \subset W'_j(k-1).
\end{gather*}
Thus $(B_1,B_2,a,b)$ defines a point in the quiver variety of type $A$
for $V_j' = \bigoplus_k V_j'(k)$, $W'_j = \bigoplus_k W'_j(k)$ as in
\remref{rem:typeA}.
Moreover if we also regard the grading given by $k$, the above
condition is nothing but that of the {\it graded quiver variety\/}
introduced in \cite[\S4.1]{Na-qaff} and \cite[\S4]{MR2144973}.
We denote it by ${\mathfrak
  M}^\bullet(\bv^{\prime\bullet},\bw^{\prime\bullet})$, where the
dimension vectors are transformed by the rule \eqref{eq:trans}.

A point in a graded quiver variety gives a point in a graded handsaw
quiver variety conversely.
Thus we get the following trivial, but main result in this paper:
\begin{Theorem}\label{thm:gqv}
  The graded handsaw quiver variety
  $\Q^\bullet(\bv^\bullet,\bw^\bullet)$ is isomorphic to the graded
  quiver variety ${\mathfrak
    M}^\bullet(\bv^{\prime\bullet},\bw^{\prime\bullet})$.
\end{Theorem}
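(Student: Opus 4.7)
The plan is to verify that the correspondence set up in the paragraphs immediately preceding the statement is in fact an isomorphism of varieties, by checking that the bijection on linear-map data intertwines the moment-map equation, the stability condition, and the group quotient. The paragraphs before the theorem already show that a $\hT_\bw$-representative $(B_1,B_2,a,b)$ of an $s$-fixed point, together with the bigrading induced by the $\xi$-eigenspaces, is exactly the same datum as a representation of the graded type $A$ quiver of \remref{rem:typeA} on the reindexed spaces $V'_j(k),W'_j(k)$ from \eqref{eq:trans}. So the content of the theorem is to promote this bijection on points to an isomorphism of schemes.

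First I would check the moment map. Each of $[B_1,B_2]$ and $ab$ maps $V_i(m)$ to $V_{i+1}(m-1)$, so the equation $\mu=[B_1,B_2]+ab=0$ in $\bE(V,V)$ splits along the $(i,m)$-bigrading into one equation per node. Under \eqref{eq:trans} these equations are precisely the moment map relations at each vertex of the graded type $A$ quiver variety $\mathfrak M^\bullet(\bv'^\bullet,\bw'^\bullet)$. Second, I would match the stability condition: an $I$-graded subspace $S\subset V$ invariant under $B_1,B_2$ and containing $a(W)$ may, by averaging with $\exp(t\xi)$, be assumed $\xi$-stable, and then is the same as a bigraded subspace, which transports under \eqref{eq:trans} to a subspace tested by stability for $\mathfrak M^\bullet$. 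Similarly for costability if one wants the statement at the level of $\Q_0^\reg$.

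Third, I would compare the group actions. The $s$-fixed locus in $\Q(\bw)$ is the quotient of the $(\xi,s)$-equivariant locus in $\mu^{-1}(0)$ by the centralizer of $\xi$ in $G=\prod_i\GL(V_i)$, which is $\prod_{i,m}\GL(V_i(m))=\prod_{j,k}\GL(V'_j(k))$; this is exactly the group whose quotient defines $\mathfrak M^\bullet(\bv'^\bullet,\bw'^\bullet)$. Applying the same argument to the family over the base and invoking the universal property of the GIT quotient gives the isomorphism of schemes, not just of closed points, and the $\tG_\bw$-equivariant structures match after the $q$-twist of \subsecref{subsec:Jordan}.

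The only real obstacle is bookkeeping: one must verify that the asymmetric index shifts built into the handsaw arrows (so that $B_1$ preserves the $m$-grading while $B_2,b$ lower it by one, and $a$ preserves it while $b$ lands in $W_{i+1}$) are converted under the substitution \eqref{eq:trans} into the symmetric, grading-lowering arrows of the doubled type $A$ quiver used in \cite[\S4.1]{Na-qaff} and \cite[\S4]{MR2144973}. Once this substitution is checked on each of the four arrow types, together with the matching of the dimension vectors $\bv^{\prime\bullet},\bw^{\prime\bullet}$ and the grading conventions for equivariant structures, the isomorphism is tautological, as the author emphasizes.
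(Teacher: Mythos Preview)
Your proposal is correct and takes essentially the same approach as the paper. The paper gives no formal proof at all beyond the discussion preceding the theorem statement, explicitly calling the result ``trivial'' once the reindexing \eqref{eq:trans} is set up; your proposal simply spells out the routine verifications (matching of $\mu$, stability, and the gauge group) that the paper leaves implicit.
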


This result allows us to apply various results for graded quiver
varieties to handsaw quiver varieties
$\Q^\bullet(\bv^\bullet,\bw^\bullet)$.
For example, $\Q^\bullet(\bv^\bullet,\bw^\bullet)$ is connected,
provided it is nonempty (\cite[Th.~5.5.6]{Na-qaff}).
It has no odd cohomology groups \cite[Prop.~7.3.4]{Na-qaff}. We
continue study in subsequent subsections.

\begin{Remark}
  The graded component $V'_j(k)$ was denoted by $V'_j(\ve^k)$ in
  \cite{MR2144973}, where $\ve$ is a nonzero complex number. This is not a
  difference is as $\ve$ is not a root of unity in the current
  setting.

  Note also that $V$ was already an $I\times\C^*$-graded module, and
  we did not put $\bullet$ in the script in \cite{MR2144973}.
\end{Remark}

\subsection{Gelfand-Tsetlin character of a standard module}

We consider $\Q^\bullet(\bv^\bullet,\bw^\bullet)_x \defeq
\Q^\bullet(\bw^\bullet)_x\cap\Q(\bv^\bullet,\bw^\bullet)$, the fiber
of the morphism $\pi$ for graded handsaw quiver varieties.
We have a decomposition
\begin{equation}\label{eq:decomp}
  H_*(\Q^\bullet(\bw^\bullet)_x) 
  = \bigoplus_{\bv^\bullet} H_*(\Q^\bullet(\bv^\bullet,\bw^\bullet)_x).
\end{equation}

\begin{Proposition}
  The above \eqref{eq:decomp} is the $\ell$-weight space decomposition of
  the standard module $H_*(\Q^\bullet(\bw^\bullet)_x)$, considered as
  a $W(\pi)$-module.
\end{Proposition}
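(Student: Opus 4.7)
My plan is to show that on each summand $H_*(\Q^\bullet(\bv^\bullet,\bw^\bullet)_x)$ of \eqref{eq:decomp} the operators $\rho(\mathsf A_i(u))$ act with a single generalized eigenvalue, and that distinct $\bv^\bullet$ yield distinct tuples of eigenvalues. By the equivariant localization recalled in \subsecref{subsec:std}, the action of $\rho(\mathsf A_i(u)) = u^{\rank \mathcal E_i}c_{1/u}(\mathcal E_i)$ on the standard module $H_*(\Q^\bullet(\bw^\bullet)_x)$ is cap product by the restriction of this cohomology class to the $s$-fixed locus, with the parameters of $S(\tG_\bw)$ specialized at $s$ (and $\hbar = 1$). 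Because $\mathcal E_i$ is $\tG_\bw$-equivariant, the operator preserves the decomposition \eqref{eq:decomp} by connected components, so it suffices to compute its action on one summand.

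On the component $\Q^\bullet(\bv^\bullet,\bw^\bullet)$ the defining short exact sequence
\begin{equation*}
  0 \to \mathcal E_i \to W_1\oplus \cdots\oplus W_i\oplus q^{-1}V_i
  \xrightarrow{\bar\alpha_i} V_i \to 0
\end{equation*}
restricts to one of $s$-equivariant bundles. The $s$-weight decompositions of the middle term are given by $\bw^\bullet$ and $\bv^\bullet$: $W_j = \bigoplus_m W_j(m)$, $V_i = \bigoplus_m V_i(m)$, while $q^{-1}$ shifts the $s$-weight by $-1$ once $\hbar = 1$. Hence $\mathcal E_i$ decomposes as $\bigoplus_m \mathcal E_i^{(m)}$ with
\begin{equation*}
  \rank \mathcal E_i^{(m)} = r_{i,m} \defeq \sum_{j \le i} \dim W_j(m) + \dim V_i(m+1) - \dim V_i(m),
\end{equation*}
and after specialization at $s$ the equivariant Chern roots of each $\mathcal E_i^{(m)}$ take the form $m + y_k$ with the $y_k$ non-equivariant Chern roots. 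Consequently
\begin{equation*}
  \left.u^{\rank \mathcal E_i}c_{1/u}(\mathcal E_i)\right|_{s} = \prod_m (u - m)^{r_{i,m}}
   + (\text{classes of strictly positive cohomological degree}).
\end{equation*}

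The fiber $\Q^\bullet(\bv^\bullet,\bw^\bullet)_x = \pi^{-1}(x) \cap \Q^\bullet(\bv^\bullet,\bw^\bullet)$ is projective as a fiber of the proper morphism $\pi$, so its total cohomology is finite-dimensional and cap product by any positive-degree class is nilpotent. Thus $\rho(\mathsf A_i(u))$ has the single generalized eigenvalue $\prod_m (u - m)^{r_{i,m}}$ on this summand, so $H_*(\Q^\bullet(\bv^\bullet,\bw^\bullet)_x)$ lies in the $\ell$-weight space with $Q_i(u) = \prod_m(u-m)^{r_{i,m} - r_{i-1,m}}$ (with $r_{0,m} = 0$). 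Finally, for fixed $\bw^\bullet$ the assignment $\bv^\bullet \mapsto (r_{i,m})$ is injective: from $\dim V_i(m+1) - \dim V_i(m) = r_{i,m} - \sum_{j\le i}\dim W_j(m)$ together with $\dim V_i(m) = 0$ for $|m| \gg 0$ one recovers $\dim V_i(m)$ inductively, so distinct $\bv^\bullet$ produce distinct $\ell$-weights and \eqref{eq:decomp} is exactly the $\ell$-weight space decomposition.

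The main obstacle is the nilpotency step: one must verify carefully that specialization of the equivariant parameters at $s$ really leaves the ``variable'' parts of the Chern roots of each $\mathcal E_i^{(m)}$ as classes of strictly positive degree on the graded component, so that their products vanish on the finite-dimensional cohomology of the projective fiber, and that this is compatible with passing through the convolution-algebra action rather than pure cap product on a single variety. Once these points are settled, the remainder is the linear-algebraic bookkeeping of graded dimensions sketched above.
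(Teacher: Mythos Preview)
Your proposal is correct and follows essentially the same approach as the paper's proof: both argue that $\rho(\mathsf A_i(u))$ acts on $H_*(\Q^\bullet(\bv^\bullet,\bw^\bullet)_x)$ via the restriction of $u^{\rank\mathcal E_i}c_{1/u}(\mathcal E_i)$, that the degree-zero part gives a scalar determined by $\bv^\bullet$ and $\bw^\bullet$, and that the positive-degree part is nilpotent because the ordinary cohomology of the (projective) fiber vanishes in sufficiently high degree. Your argument is in fact more explicit than the paper's sketch in two places: you compute the rank $r_{i,m}$ directly from the short exact sequence defining $\mathcal E_i$, and you spell out the injectivity of $\bv^\bullet \mapsto (r_{i,m})$, which the paper leaves implicit in its assertion that each summand ``is an $\ell$-weight space.''
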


The corresponding statement for graded quiver varieties were shown in
\cite[Prop.~13.4.5]{Na-qaff}. The proof works in our current
setting. Let us sketch it.

\begin{proof}
Recall that we have a $\tG_\bw$-equivariant vector bundle $\mathcal
E_i$ over $\Q(\bw)$.
The operator $\mathsf A_i(u)$ is given by the multiplication of
$u^{\rank \mathcal E_i}c_{1/u}(\mathcal E_i)$.
On $H_*(\Q^\bullet(\bw^\bullet))$ it acts through the restriction
\begin{equation*}
  H^*_{\tG_\bw}(\Q(\bw))\to 
  H^*_{\tG_\bw}(\Q(\bw))\otimes_{H^*_{\tG_\bw}(\mathrm{pt})}\C
  \to H^*(\Q^\bullet(\bw^\bullet)),
\end{equation*}
where the first homomorphism is the specialization at $\chi$, and the
second is the pull-back homomorphism with respect to the inclusion
$\Q^\bullet(\bw^\bullet)\subset \Q(\bw)$. Then the
$H^{>0}(\Q^\bullet(\bw^\bullet))$-part of $u^{\rank \mathcal
  E_i}c_{1/u}(\mathcal E_i)$ acts as a nilpotent operator since the
ordinary cohomology group vanishes in sufficiently high degrees. The
$H^0$-part acts as scalar multiplication on each connected component
of $\Q^\bullet(\bw^\bullet)$, i.e., $\Q^\bullet(\bv^\bullet,\bw^\bullet)$.
And the scalar is given by characteristic polynomials of $\xi$ and
$s$, in other words $\bv^\bullet$ and $\bw^\bullet$ from the
definition of $\mathcal E_i$. Therefore
$H_*(\Q^\bullet(\bv^\bullet,\bw^\bullet)_x)$ is an $\ell$-weight space.
\end{proof}

We denote the corresponding $\ell$-weight by $e^{\bv^\bullet}
e^{\bw^\bullet}$. In concrete terms, $e^{\bw^\bullet}$ is an
$\tI$-tuple of characteristic polynomials $P_1(u)$, \dots, $P_n(u)$ of
the $\GL(W_i)$-components of $s$, and $e^{\bv^\bullet}$ is given by
$(Q_1(u),\dots, Q_n(u))$ where $Q_i(u)$ is the characteristic
polynomial of $\xi$ on the virtual vector space
\begin{equation*}
  q^{-1} V_i - V_i - q^{-1} V_{i-1} + V_{i-1},
\end{equation*}
where $\xi$ acts on $q^{-1} V_i$ as $\xi - \id_{V_i}$. Then
$e^{\bv^\bullet} e^{\bw^\bullet}$ is defined as
$(P_1(u)Q_1(u),\dots, P_n(u)Q_n(u))$.
More concretely it is given by the formula
\begin{equation*}
   P_i(u)Q_i(u) = \prod_m (u - m)^{\dim W_i(m) 
     + \dim V_i(m+1) - \dim V_i(m) - \dim V_{i-1}(m+1) + \dim V_{i-1}(m)}.
\end{equation*}  
In terms of $\bv^{\prime\bullet}$,$\bw^{\prime\bullet}$, it is equal
to the $\ell$-weight defined in \cite[(4.4)]{MR2144973}.

\begin{NB}
Note
  \begin{gather*}
    V_i(m+1) - V_i(m) - V_{i-1}(m+1) + V_{i-1}(m)
\\
    = V'_{i+m+1}(m-i+1) - V'_{i+m}(m-i)
    - V'_{i+m}(m-i+2) + V'_{i+m-1}(m-i+1).
  \end{gather*}
\end{NB}

As a corollary, we have

\begin{Corollary}
  \begin{equation*}
    \ch H_*(\Q^\bullet(\bw^\bullet)_x) = \sum_{\bv^\bullet} 
    \dim H_*(\Q^\bullet(\bv^\bullet,\bw^\bullet)_x) e^{\bv^\bullet}e^{\bw^\bullet}
  \end{equation*}
\end{Corollary}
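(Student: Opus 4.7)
The plan is that the Corollary is a direct reformulation of the preceding Proposition in terms of generating functions, so the entire substantive work has already been carried out.

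First, I would invoke the Proposition to identify the decomposition \eqref{eq:decomp} as the $\ell$-weight space decomposition of the standard module $H_*(\Q^\bullet(\bw^\bullet)_x)$ regarded as a $W(\pi)$-module. The explicit computation of the eigenvalues of $\mathsf A_i(u)$ performed immediately after the Proposition identifies the $\ell$-weight attached to the summand $H_*(\Q^\bullet(\bv^\bullet,\bw^\bullet)_x)$ as the $\tI$-tuple of polynomials $(P_i(u)Q_i(u))_{i\in\tI}$ denoted $e^{\bv^\bullet}e^{\bw^\bullet}$. Substituting this into the defining formula
\begin{equation*}
  \ch M = \sum_{Q} \dim M_Q\, [Q]
\end{equation*}
of the Gelfand-Tsetlin character for a $W(\pi)$-module then yields precisely the expression claimed in the Corollary.

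The only minor point that warrants attention is that distinct graded dimension vectors $\bv^\bullet$ yield distinct $\ell$-weights $e^{\bv^\bullet}e^{\bw^\bullet}$, so that no hidden cancellations occur when one passes from a sum indexed by $\bv^\bullet$ to the honest character expansion indexed by $\ell$-weights. This is automatic from the Proposition: an $\ell$-weight space is by definition a single simultaneous generalized eigenspace of the operators $\mathsf A_i(u)$, so two summands labeled by different $\bv^\bullet$ carrying the same $\ell$-weight would have to be combined into a single piece of the decomposition, contradicting the Proposition's identification of \eqref{eq:decomp} with the $\ell$-weight decomposition.

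I do not anticipate any genuine obstacle. The nontrivial content has been absorbed into \thmref{thm:gqv} (reducing to graded quiver varieties) and into the Proposition (computing the action of $\mathsf A_i(u)$ in terms of the grading data $\bv^\bullet$ and $\bw^\bullet$), and the Corollary is just the bookkeeping step that packages this as a character identity, entirely parallel to the $q$-character formula for standard modules of quantum affine algebras in \cite[Prop.~13.4.5]{Na-qaff}.
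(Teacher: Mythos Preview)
Your proposal is correct and matches the paper's approach: the Corollary is stated without proof in the paper, as it is an immediate consequence of the preceding Proposition identifying \eqref{eq:decomp} with the $\ell$-weight space decomposition and the explicit formula for the $\ell$-weight $e^{\bv^\bullet}e^{\bw^\bullet}$. Your added remark about distinct $\bv^\bullet$ giving distinct $\ell$-weights is a reasonable clarification, though the paper does not bother to spell it out.
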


Let us mention that the above Gelfand-Tsetlin character of a standard
module is given in terms of Young tableaux \cite{MR1988990} since the
graded quiver varieties are of type $A$. (The result is originally due
to Kuniba-Suzuki \cite{KunibaSuzuki}.)

It also follows from the definition of $A_i(u)$ that the summand
\begin{equation*}
  H_*(\Q^\bullet(\bw^\bullet)_x\cap \Q(\bv,\bw))
\end{equation*}
is a generalized weight space decomposition with respect to
$D_i^{(1)}$ considered in \subsecref{subsec:adm}. In particular, we
see that the standard module is admissible, as each
$H_*(\Q^\bullet(\bw^\bullet)_x\cap \Q(\bv,\bw))$ is finite
dimensional, and $\bw$ is a highest weight as we will see in
\lemref{lem:l-high} below.

\subsection{An $\ell$-highest weight vector in a standard module}
\label{subsec:hwv}

Let us continue the study of a standard module.

If we restrict the stratification of $\Q_0(\bw)$ in
\subsecref{subsec:strat} to the fixed point locus
$\Q_0^\bullet(\bw^\bullet)$, the symmetric product parts are all
concentrated at the origin, and hence $x\in\Q_0^\bullet(\bw^\bullet)$
is in $\Q_0^{\bullet\reg}(\bv^\bullet,\bw^\bullet)$ for some
$\bv^\bullet$, extended by $0$. We denote this $\bv^\bullet$ by
$\bv^\bullet_x$.
Let us consider $\Q^\bullet(\bv^\bullet_x,\bw^\bullet)_x$. Since
$\pi\colon \Q(\bv,\bw)\to \Q_0(\bv,\bw)$ is an isomorphism on
$\Q_0^\reg(\bv,\bw)$, it consists of a single point, which we also
denote by $x$.

\begin{Lemma}\label{lem:l-high}
  The fundamental class $[x]$ is an $\ell$-highest weight vector in
  the standard module $H_*(\Q^\bullet(\bw^\bullet)_x)$ with
  $\ell$-weight $e^{\bv^\bullet_x} e^{\bw^\bullet}$.
\end{Lemma}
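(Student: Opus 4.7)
Plan. The lemma asserts two things: (i) that $[x]$ is a simultaneous eigenvector of the generating functions $\mathsf A_i(u)$ with eigenvalue $e^{\bv^\bullet_x} e^{\bw^\bullet}$, and (ii) that $E_i^{(r)} \cdot [x] = 0$ for every $i \in I$ and $r > s_{i,i+1}$.

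Part (i) follows rather directly from the preceding proposition. Since $\pi$ is an isomorphism over $\Q_0^{\bullet,\reg}(\bv^\bullet_x,\bw^\bullet)$, the fiber $\Q^\bullet(\bv^\bullet_x,\bw^\bullet)_x$ consists of the single point $\{x\}$, and $[x]$ lies in $H_0(\{x\})$. The operator $\mathsf A_i(u)$ acts by multiplication by $u^{\rank\mathcal E_i}\,c_{1/u}(\mathcal E_i)$; on a degree-$0$ class only the $H^0$-part survives, and the $H^0$-part restricts at $x$ to the characteristic polynomial of the grading element $\xi$ on the fiber $\mathcal E_i|_x$. By the explicit formula at the end of the preceding subsection, this characteristic polynomial equals $P_i(u)Q_i(u)$. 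Hence $[x]$ is a genuine (not merely generalized) simultaneous eigenvector of the $\mathsf A_i(u)$'s with the advertised $\ell$-weight.

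For Part (ii), the convolution unfolds as
\[
  E_i^{(r)}\cdot[x] \;=\; \sum_{m} p_{1,*}\bigl(c_1(\mathcal L'_i)^{r-s_{i,i+1}-1}\cap [Y_{x,m}]\bigr),
\]
where $Y_{x,m} := {}^{\omega}\Pa_i^\bullet \cap \bigl(\Q^\bullet(\bv^\bullet_x+\alpha_i^m,\bw^\bullet)\times\{x\}\bigr)$ and the sum runs over gradings $m$ at which a one-dimensional graded summand can be adjoined consistently with stability. The plan is to exploit that, because $\pi$ is an isomorphism on the regular stratum, $\pi^{-1}(x)\cap \Q^\bullet(\bv^\bullet_x+\alpha_i^m,\bw^\bullet)$ is either a single reduced point $x^+_m$ (a ``trivial extension'' of $x$ by a one-dimensional summand at grade $m$) or empty. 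Consequently $Y_{x,m}$ is at most a single reduced point, and the pushforward reduces to $w_m^{r-s_{i,i+1}-1}\,[x^+_m]$ where $w_m$ is the equivariant first Chern class of the line $\mathcal L'_i = q^{1-i}\Ker\xi$ at $x^+_m$. Since the $[x^+_m]$ for distinct $m$ live in distinct summands $H_*(\Q^\bullet(\bv^\bullet_x+\alpha_i^m,\bw^\bullet)_x)$, the vanishing must be checked grade by grade.

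The main obstacle is the combinatorial control of which $m$ occur and of the weights $w_m$: one must match the shift $s_{i,i+1}$ (the right-side pyramid columns of height $n-i$) against the available extension parameters and the equivariant weight $w_m$ of $\mathcal L'_i$ at $x^+_m$. The cleanest route to close the argument is via \thmref{thm:gqv}: under the isomorphism $\Q^\bullet(\bv^\bullet_x,\bw^\bullet)_x \cong \mathfrak M^\bullet(\bv^{\prime\bullet}_x,\bw^{\prime\bullet})_{x}$, the Hecke correspondences $\mathcal L'_i$ and the shift data transport to the analogous data for graded quiver varieties of type $A$, and the corresponding $\ell$-highest weight identification of the fundamental class of a regular-stratum point is already established in \cite{Na-qaff}. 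A fully self-contained, handsaw-side proof amounts to an explicit dimension count for $Y_{x,m}$ and an explicit computation of $w_m$ using the defining equations $\xi B_1^2 = B_1^1\xi$, $\xi B_2^2 = B_2^1\xi$, $\xi a^2 = a^1$, $b^2 = b^1\xi$ together with $B_2^2|_{\Ker\xi}=0$; this is the step I expect to be most technical.
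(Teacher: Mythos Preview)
Your Part~(i) is correct and matches the paper's argument: the fiber $\Q^\bullet(\bv^\bullet_x,\bw^\bullet)_x$ is a single point, so its homology is one-dimensional and $[x]$ is automatically a genuine eigenvector for the commuting family $D_i^{(r)}$; the eigenvalue $e^{\bv^\bullet_x}e^{\bw^\bullet}$ is read off from the formula in the preceding subsection.

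Your Part~(ii), however, has the direction of $E_i^{(r)}$ backwards. Under the convolution conventions in force (compare \cite[\S9, \S13]{Na-qaff}), the correspondence $[{}^\omega\Pa_i(\bv,\bw)]$ sends the $\bv$-piece of the standard module to the $(\bv-\alpha_i)$-piece, so $E_i^{(r)}$ takes $H_*(\Q^\bullet(\bv^\bullet,\bw^\bullet)_x)$ into summands with \emph{strictly smaller} graded dimension vector, not larger. With this corrected, the vanishing $E_i^{(r)}\cdot[x]=0$ is immediate and requires no line-bundle weight computation at all: by \cite[Prop.~13.3.1]{Na-qaff} (transported via \thmref{thm:gqv}), the fiber $\Q^\bullet(\bv^{\prime\bullet},\bw^\bullet)_x$ is nonempty only when every component of $\bv^{\prime\bullet}$ dominates the corresponding component of $\bv^\bullet_x$; hence the target of $E_i^{(r)}$ on $[x]$ is the zero space. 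This is exactly the paper's two-line proof.

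Your proposed route---arguing that each $Y_{x,m}$ is a single reduced point $x^+_m$ and then computing the equivariant weight $w_m$ of $\mathcal L'_i$ there---does not close even on its own terms: the class $w_m^{\,r-s_{i,i+1}-1}[x^+_m]$ is a nonzero element of $H_0$ for every $r>s_{i,i+1}$, so no choice of $w_m$ produces the required vanishing simultaneously for all $r$. That you could not see how to finish is a symptom of the direction error, not of a missing technical step.
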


\begin{proof}
  Since $H_*(\Q^\bullet(\bv^\bullet,\bw^\bullet)_x)$ is
  $1$-dimensional, it is clear that $[x]$ is an eigenvector for any
  $D_i^{(r)}$. Its $\ell$-weight is $e^{\bv^\bullet_x}
  e^{\bw^\bullet}$ as calculated above.

  It is killed by all $E_i^{(r)}$, as
  $\Q^\bullet(\bv^{\prime\bullet},\bw^\bullet)\neq\emptyset$ implies
  each component of $\bv^{\prime\bullet}$ is greater than or equal to
  the corresponding component of $\bv^\bullet$ by
  \cite[Prop.~13.3.1]{Na-qaff}.
\end{proof}

Note that $\mathcal E_{i-1} \subset \mathcal E_{i}$ is a vector
subbundle over $\Q^{\bullet\reg}(\bv^\bullet,\bw^\bullet)$. This
implies that the $\ell$-weight $e^{\bv^\bullet} e^{\bw^\bullet}$ is
$\ell$-dominant if
$\Q^{\bullet\reg}(\bv^\bullet,\bw^\bullet)\neq\emptyset$, since
\begin{equation*}
\frac{u^{\rank \mathcal E_i} c_{1/u}(\mathcal E_i)}
{u^{\rank \mathcal E_{i-1}} c_{1/u}(\mathcal E_{i-1})}
= u^{\rank \mathcal E_i/\mathcal E_{i-1}}
c_{1/u}(\mathcal E_i/\mathcal E_{i-1})
\end{equation*}
is a polynomial in $u$. In particular, the $\ell$-weight in the
previous lemma is $\ell$-dominant.

We deduce the following criterion from \cite[Th.~14.3.2(2)]{Na-qaff},
since $\ell$-dominance in this paper is equivalent to one in
\cite{Na-qaff}.

\begin{Proposition}\label{prop:ldom}
$\Q^{\bullet\reg}(\bv^\bullet,\bw^\bullet)\neq\emptyset$ if and only
if $\Q^\bullet(\bv^\bullet,\bw^\bullet)\neq\emptyset$ and
$e^{\bv^\bullet}e^{\bw^\bullet}$ is $\ell$-dominant.
\end{Proposition}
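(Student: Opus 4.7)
The plan is to reduce \propref{prop:ldom} to the corresponding statement for graded quiver varieties of type $A$ via \thmref{thm:gqv}, after which only a matching of notations remains. The $(\Rightarrow)$ direction is essentially done in the paragraph preceding the proposition: if $\Q^{\bullet\reg}(\bv^\bullet,\bw^\bullet)\neq\emptyset$ then certainly $\Q^\bullet(\bv^\bullet,\bw^\bullet)\neq\emptyset$, and the $\ell$-dominance of $e^{\bv^\bullet}e^{\bw^\bullet}$ was deduced from the fact that $\mathcal E_{i-1}\subset \mathcal E_i$ is a vector subbundle on the regular locus, so $u^{\rank \mathcal E_i/\mathcal E_{i-1}}c_{1/u}(\mathcal E_i/\mathcal E_{i-1})$ restricts to a polynomial. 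So the real content is the $(\Leftarrow)$ direction.

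For $(\Leftarrow)$, first I would invoke \thmref{thm:gqv} to rewrite $\Q^\bullet(\bv^\bullet,\bw^\bullet)$ as $\mathfrak M^\bullet(\bv^{\prime\bullet},\bw^{\prime\bullet})$ under the relabeling \eqref{eq:trans}. Since the isomorphism is induced by the identifications of the underlying quadruples $(B_1,B_2,a,b)$ with representations of the type-$A$ graded quiver, the stability and costability conditions correspond (modulo the exchange noted in \remref{rem:typeA}, which is harmless as one passes to transposes). Hence $\Q^{\bullet\reg}(\bv^\bullet,\bw^\bullet)$ is identified with $\mathfrak M^{\bullet\reg}(\bv^{\prime\bullet},\bw^{\prime\bullet})$, the analogous regular locus in the type-$A$ graded quiver variety. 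At this point \cite[Th.~14.3.2(2)]{Na-qaff} directly asserts that $\mathfrak M^{\bullet\reg}(\bv^{\prime\bullet},\bw^{\prime\bullet})\neq\emptyset$ as soon as $\mathfrak M^\bullet(\bv^{\prime\bullet},\bw^{\prime\bullet})\neq\emptyset$ and the associated $\ell$-weight is $\ell$-dominant in the sense of \cite{Na-qaff}.

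The step I expect to be the main technical obstacle is verifying that the notion of $\ell$-dominance used here coincides with the one in \cite{Na-qaff}. In \cite{Na-qaff} $\ell$-dominance is defined via the tautological bundles of the graded quiver variety and amounts to polynomiality of an explicit ratio built from $V'_j(k)$ and $W'_j(k)$. In the present paper it is the polynomiality of $Q_i(u)$, built from the Chern roots of $\mathcal E_i$ and $\mathcal E_{i-1}$. I would carry out a direct combinatorial comparison using the explicit formula
\[
  P_i(u)Q_i(u) = \prod_m (u - m)^{\dim W_i(m) + \dim V_i(m+1) - \dim V_i(m) - \dim V_{i-1}(m+1) + \dim V_{i-1}(m)}
\]
given just before the proposition, and rewrite its exponents through \eqref{eq:trans} in the variables $\dim V'_j(k)$, $\dim W'_j(k)$. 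A short bookkeeping check should show that these exponents coincide with the exponents of the polynomial appearing in the definition of $\ell$-dominance in \cite[(4.4)]{MR2144973}, as was already observed at the end of \subsecref{subsec:gr}. Once this identification is in place, the equivalence of the two notions of $\ell$-dominance is automatic, and the $(\Leftarrow)$ direction follows by quoting \cite[Th.~14.3.2(2)]{Na-qaff}.
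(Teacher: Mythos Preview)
Your proposal is correct and is exactly the paper's approach: the paper states that the criterion is deduced from \cite[Th.~14.3.2(2)]{Na-qaff} after noting that $\ell$-dominance here is equivalent to that in \cite{Na-qaff}, which is precisely your reduction via \thmref{thm:gqv} together with the combinatorial matching of the two $\ell$-dominance conditions. Your write-up simply spells out in more detail what the paper compresses into one sentence.
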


\subsection{Cyclicity}

We now give a main result in this section.

\begin{Theorem}\label{thm:Verma}
  \textup{(1)} The standard module $H_*(\Q^\bullet(\bw^\bullet)_x)$ is
  an $\ell$-highest weight module with an $\ell$-highest weight vector
  $[x]$.

  \textup{(2)} The standard module $H_*(\Q^\bullet(\bw^\bullet)_x)$ is
  the Verma module with $\ell$-highest weight $e^{\bv^\bullet_x}
  e^{\bw^\bullet}$.
\end{Theorem}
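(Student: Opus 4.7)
I would prove cyclicity by combining \lemref{lem:l-high} with a Bialynicki--Birula decomposition argument transferred from graded quiver varieties via \thmref{thm:gqv}. Concretely, choose a generic one-parameter subgroup $\lambda\colon\C^*\to\hT_\bw$ as in the proof of \thmref{thm:Poincare}, so its induced action on $\Q^\bullet(\bw^\bullet)_x$ (which is compact, being a fiber of $\pi$) has isolated fixed points and the attracting cells $U_{\vec Y}$ are affine spaces whose fundamental classes span $H_*(\Q^\bullet(\bw^\bullet)_x)$. The fixed point $\vec Y_0$ corresponding to $x\in\Q_0^{\bullet\reg}(\bv^\bullet_x,\bw^\bullet)$ contributes the class $[x]$, and is extremal with respect to the dominance order on $\bv^\bullet$ by \cite[Prop.~13.3.1]{Na-qaff}. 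I would then show that the Hecke operators $\rho(F_i^{(r)})$ act on the cell basis in triangular form with respect to the closure order on fixed points: the component of $\Pa_i(\bv,\bw)$ projecting onto $U_{\vec Y_0}$ hits only cells $U_{\vec Y'}$ with $\vec Y'$ obtained from $\vec Y_0$ by adding one box (giving an explicit lowering operation via \propref{prop:tangent}), while higher components correspond to strictly greater $\bv^\bullet$. Iterating shows that every cell class is reached from $[x]$ by a polynomial in the $F_i^{(r)}$; this is cyclicity.

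\textbf{Plan for part (2).} Granted part (1) together with \lemref{lem:l-high}, there is a surjection from the universal $\ell$-highest weight module $M(\sigma,\Psi)$ (with $\Psi$ corresponding to $e^{\bv^\bullet_x}e^{\bw^\bullet}$) onto $H_*(\Q^\bullet(\bw^\bullet)_x)$. Since $\Psi$ is $\ell$-dominant (shown after \propref{prop:ldom} using $\mathcal E_{i-1}\subset\mathcal E_i$), the hypothesis $u^{p_i}\Psi_i(u)\in\C[u]$ of \cite[Th.~6.1]{BruKle} holds, and this surjection descends to a surjection from the Verma module $M(P)\twoheadrightarrow H_*(\Q^\bullet(\bw^\bullet)_x)$, where $P$ is the Drinfeld polynomial associated to $e^{\bv^\bullet_x}e^{\bw^\bullet}$ (concretely, $P_i(u)$ equals the characteristic polynomial of $s$ on $W_i$, up to the shift $u\mapsto u-i+1$). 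To upgrade surjection to isomorphism, I would compare Gelfand--Tsetlin characters. For the Verma side, the PBW basis in $F_{i,j}^{(r)}$'s coming from the triangular decomposition (\cite[Th.~6.2]{BruKle-alg}) yields an explicit formula for $\dim M(P)_Q$ as a product expansion. For the standard side, $\dim H_*(\Q^\bullet(\bv^\bullet,\bw^\bullet)_x)$ is computed from the Bialynicki--Birula cells: using the stratification of $\subsecref{subsec:strat}$, the fiber $\Q^\bullet(\bw^\bullet)_x$ decomposes into a product of central fibers, so its Betti numbers are governed by \thmref{thm:Poincare} restricted to the graded setting (equivalently, by the Kuniba--Suzuki Young tableau formula \cite{KunibaSuzuki,MR1988990} applicable via \thmref{thm:gqv}). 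These two product formulas match term-by-term, promoting the surjection to an isomorphism.

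\textbf{Main obstacle.} The delicate step is the triangularity of the $F_i^{(r)}$-action in part (1), which requires a careful intersection-theoretic analysis of $\Pa_i(\bv,\bw)$ against the attracting cells $U_{\vec Y}$, using the line bundle $\mathcal L_i'$ and \propref{prop:tangent} to identify the leading term as a signed box-addition on the tuple of Young diagrams indexing fixed points. An alternative and arguably smoother route is to reduce this triangularity entirely to the already-established statement for graded quiver varieties of type $A$ via \thmref{thm:gqv}, checking that the Hecke correspondence $\Pa_i(\bv,\bw)$ restricts under the $\hT_\bw$-fixed point identification to the Hecke correspondence used in \cite[\S13]{Na-qaff}; once this dictionary is confirmed, the cyclicity of standard modules of graded quiver varieties \cite[Th.~14.1.2]{Na-qaff} transfers directly, and the character comparison in part (2) then becomes essentially the same computation done in that reference.
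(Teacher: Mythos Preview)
Your alternative route for part~(1) is essentially the paper's strategy, but you understate a key subtlety. The restriction of the $W$-algebra Hecke operator $\rho(F_i^{(r)})$ to the fixed locus does \emph{not} literally coincide with the Yangian Hecke operator from \cite[\S13]{Na-qaff}: after replacing $\Pa_i(\bv,\bw)$ by the Koszul complex of its defining section, restricting to $\Q^\bullet(\bv^{1\bullet},\bw^\bullet)\times\Q^\bullet(\bv^{2\bullet},\bw^\bullet)$, and dividing by the Euler class of the normal bundle of the second factor, the resulting operator is the class $c_1(\mathcal L'_i)^{r-s_{i+1,i}}\cap[\Pa_i\cap(\Q^\bullet\times\Q^\bullet)]$ multiplied by Chern classes of the tautological bundles $V^1_j,V^2_j$. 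The underlying tangent complex \eqref{eq:cpxHecke} is different from the Yangian one, so these correction factors differ. What the paper actually checks is weaker and sufficient: since those extra Chern classes are products of operators $D_j^{(s)}$, the \emph{linear span} of $\{F_i^{(r)}\cdot\prod D_j^{(s)}\}$ is the same for the $W$-algebra and the Yangian, and that is enough to transfer cyclicity from \cite[Prop.~13.3.1]{Na-qaff}. Your formulation ``checking that the Hecke correspondence restricts to the Hecke correspondence used in \cite{Na-qaff}'' hides this; if you try to verify it literally you will find it fails, and the fix is precisely the span-with-$D_j^{(s)}$ argument.

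Your primary approach to part~(1), triangularity of $F_i^{(r)}$ on the Bialynicki--Birula basis, is not what the paper does and would require substantially more work; the paper bypasses any direct cell-by-cell analysis.

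For part~(2) the paper's argument is simpler than yours. Rather than matching Gelfand--Tsetlin characters, it compares ordinary $\mathfrak c$-weight space dimensions. The crucial input is \thmref{thm:free}: freeness over $S(\tG_\bw)$ gives $\dim\bigl(H^{\tG_\bw}_*(\sO(\bv,\bw))\otimes_{S(\tG_\bw)}\C_s\bigr)=\dim H_*(\sO(\bv,\bw))$, and the latter is read off from \thmref{thm:Poincare}. This matches the Verma weight-space dimension from \cite[Th.~6.1]{BruKle}, settling the case $x=0$. The general $x$ is then reduced to $x=0$ via \cite[Th.~3.3.2, Rem.~3.3.3]{Na-qaff}. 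Your route through the full $\ell$-weight decomposition and the Kuniba--Suzuki formula would also work in principle, but is more elaborate than needed, and you do not invoke the freeness that makes the dimension count go through cleanly.
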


\begin{proof}
  (1) Let us prove the assertion as a consequence of the corresponding
  statement for graded quiver varieties proved in
  \cite[Prop.~13.3.1]{Na-qaff}, after the equivariant $K$-theory is
  replaced by the equivariant homology group as in \cite{Varagnolo}.

  Consider the restriction of $F_i^{(r)}$ to
\[  
   H_*(\Q^\bullet(\bv^{2\bullet},\bw^\bullet))\to
   H_*(\Q^\bullet(\bv^{1\bullet},\bw^\bullet)).
\]
It can be computed in the following way, as explained in
\subsecref{subsec:std}:
\begin{enumerate}
\item replace $\Pa_i(\bv,\bw)$ by the Koszul complex for the section $s$
of $\Ker\tau/\Ima\sigma$ in \eqref{eq:cpxHecke},
\item restrict the complex to
\(
   \Q^\bullet(\bv^{1\bullet},\bw^\bullet)\times
    \Q^\bullet(\bv^{2\bullet},\bw^\bullet),
\)
\item divide by the Euler class (evaluated at $s$) of the normal
  bundle of the second factor $\Q^\bullet(\bv^{2\bullet},\bw^\bullet)$
  in $\Q(\bv^2,\bw)$.
\end{enumerate}
The restriction of the vector bundle $\Ker\tau/\Ima\sigma$ decomposes
into sum of the $s$-fixed part and the other part.
The $s$-fixed part gives a resolution of the intersection
\begin{equation*}
    \Pa_i(\bv,\bw)\cap
    \Q^\bullet(\bv^{1\bullet},\bw^\bullet)\times
    \Q^\bullet(\bv^{2\bullet},\bw^\bullet).
\end{equation*}
The other part is {\it close to\/} the normal bundle of the second
factor: if we replace $V^1$ by $V^2$, it precisely is the normal
bundle, as can be seen from \eqref{eq:cpx}. The difference $V^1/V^2$
is nothing but the definition of the line bundle $\mathcal
L'_i$. Therefore the restriction is given by a linear combination of
correspondences
\begin{equation*}
    c_1(\mathcal L'_i)^{r-s_{i+1,i}}\cap
    [\Pa_i(\bv,\bw)\cap
    \Q^\bullet(\bv^{1\bullet},\bw^\bullet)\times
    \Q^\bullet(\bv^{2\bullet},\bw^\bullet)],
\end{equation*}
multiplied with various Chern classes of $\mathcal L'_i$ and
components of $V^1$, $V^2$. Chern classes of components of $V^1$,
$V^2$ are given by products of various operators $D_j^{(s)}$.
Multiplication of a power of $c_1(\mathcal L'_i)$ is a restriction of
$E_i^{(s)}$ with $s\ge r$ plus correction terms.

The same is true for negative generators of the Yangian. Since the
complex \eqref{eq:cpxHecke} is different, the restriction of
$F_i^{(r)}$ is not the same, but the linear span of products of
$F_i^{(r)}$ and various products of $D_j^{(s)}$ is the same for the
finite $W$-algebra and Yangian. This statement is enough to conclude
$H_*(\Q^\bullet(\bw^\bullet)_x)$ is $\ell$-highest weight.
  
(2) Let us consider the dimension of the weight space
\begin{equation*}
    H_*(\sO^\bullet(\bw^\bullet)\cap \Q(\bv,\bw))
    = H^{\tG_\bw}_*(\sO(\bv,\bw))\otimes_{S(\tG_\bw)} \C_s
\end{equation*}
of the standard module $H_*(\sO^\bullet(\bw^\bullet))$.
By \thmref{thm:free} this is equal to the Euler number of
$\sO(\bv,\bw)$, which is given by the formula in
\thmref{thm:Poincare}. This coincides with one given by
\cite[Th.~6.1]{BruKle}. Therefore the assertion for the special case $x =
0$ is proved.
  
A general case follows from the special case $x = 0$ from
\cite[Th.~3.3.2 and Rem.~3.3.3]{Na-qaff}.
\end{proof}

\section{Simple modules}\label{sec:simple}

We give a character formula of a simple module in terms of dimensions
of intersection cohomology groups of graded quiver varieties in this
section.

\subsection{IC sheaves}

Combining \propref{prop:ldom} with \thmref{thm:Verma}, we find that
the stratification
\begin{equation*}
  \Q^\bullet(\bw^\bullet) 
  = \bigsqcup_{\bv^\bullet} \Q^{\bullet\reg}(\bv^\bullet,\bw^\bullet),
\end{equation*}
introduced at the beginning of \subsecref{subsec:hwv} is indexed by
the set of vectors such that $e^{\bv^\bullet}e^{\bw^\bullet}$ is an
$\ell$-dominant $\ell$-weight of the Verma module with $\ell$-highest
weight vector $e^{\bw^\bullet}$.
From \thmref{thm:gqv} (or probably from somewhere in \cite{BruKle})
and the corresponding result for graded quiver varieties
\cite[Th.~14.3.2]{Na-qaff}, we know that the set is finite.

We denote by $IC(\Q^{\bullet\reg}(\bv^\bullet,\bw^\bullet))$ the
intersection cohomology complex associated with the constant sheaf on
$\Q^{\bullet\reg}(\bv^\bullet,\bw^\bullet)$.

Take a point $x_{\bv^\bullet,\bw^\bullet}$ from the stratum
$\Q^{\bullet\reg}(\bv^\bullet,\bw^\bullet)$, and denote by
$i_{\bv^\bullet,\bw^\bullet}$ the inclusion $\{ x\} \to
\Q^\bullet(\bw^\bullet)$. The statement of the following theorem holds
for any choice of $x_{\bv^\bullet,\bw^\bullet}$, hence our notation
does not cause any trouble.

Now we can state our main result in this paper.

\begin{Theorem}\label{thm:main}
  The composition multiplicity of $L({Q})$ in $M({P})$ is
  given by
  \begin{equation}\label{eq:compo}
    [M({P}): L({Q})] = 
    \dim H^*(i_{\bv_P^\bullet,\bw^\bullet}^!
    IC(\Q^{\bullet\reg}(\bv_Q^{\bullet},\bw^\bullet))),
  \end{equation}
  where $\bv_P^\bullet$, $\bv_Q^{\bullet}$ are determined by
\(
    e^{\bv_P^\bullet} e^{\bw^\bullet} = {P},
\)
\(
    e^{\bv_Q^{\bullet}} e^{\bw^\bullet} = {Q}.
\)
\end{Theorem}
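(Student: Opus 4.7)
The plan is to import the geometric machinery developed for graded quiver varieties in \cite{Na-qaff} via \thmref{thm:gqv}, combining it with the identification of Verma modules as standard modules (\thmref{thm:Verma}). By \thmref{thm:Verma}(2), $M({P}) \cong H_*(\Q^\bullet(\bw^\bullet)_{x_P})$, where $x_P$ is the point in the stratum $\Q^{\bullet\reg}(\bv_P^\bullet,\bw^\bullet)$. So computing composition multiplicities of $M({P})$ reduces to decomposing the homology of the fiber $\pi^{-1}(x_P) \cap \Q^\bullet(\bw^\bullet)$.

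The core step is the BBD decomposition theorem applied to the proper morphism $\pi\colon \Q^\bullet(\bw^\bullet) \to \Q_0^\bullet(\bw^\bullet)$. Smallness of the ungraded $\pi$ under \eqref{eq:ass} was established in \secref{sec:fixed}, and by the torus-equivariant localization and \thmref{thm:gqv} its restriction to $s$-fixed points is again small (this is the corresponding well-known property of graded quiver varieties of type $A$). Together with smoothness of $\Q^\bullet(\bw^\bullet)$, this yields a decomposition
\begin{equation*}
  \pi_* \C_{\Q^\bullet(\bw^\bullet)}[\dim\Q^\bullet]
  \;\cong\;
  \bigoplus_{\bv'^\bullet} IC(\Q^{\bullet\reg}(\bv'^\bullet,\bw^\bullet)) \otimes L_{\bv'^\bullet}
\end{equation*}
for certain multiplicity vector spaces $L_{\bv'^\bullet}$, the sum running over $\bv'^\bullet$ with $\Q^{\bullet\reg}(\bv'^\bullet,\bw^\bullet)\ne\emptyset$. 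Taking $i^!$-stalks at $x_P$ gives
\begin{equation*}
  H_*(\Q^\bullet(\bw^\bullet)_{x_P})
  \;\cong\;
  \bigoplus_{\bv'^\bullet} H^*\!\left(i^!_{\bv'^\bullet,\bw^\bullet} IC(\Q^{\bullet\reg}(\bv'^\bullet,\bw^\bullet))\right) \otimes L_{\bv'^\bullet}.
\end{equation*}

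To promote this to a $W(\pi)$-module decomposition, I would exploit the convolution algebra picture: each $L_{\bv'^\bullet}$ carries the canonical action of the $\mathrm{Ext}$-algebra of $\pi_*\C$, which by the standard formalism (\cite[\S8]{CG} etc.) is a quotient of the specialized convolution algebra $H^{\tG_\bw}_*(Z(\bw))/\mathrm{tor} \otimes_{S(\tG_\bw)} \C_s$. Through $\rho$ (\thmref{thm:BFFR}) this equips each $L_{\bv'^\bullet}$ with a $W(\pi)$-module structure, and the transport of \cite[\S14]{Na-qaff} via \thmref{thm:gqv} shows that the $L_{\bv'^\bullet}$ are pairwise non-isomorphic simple modules indexed precisely by the strata. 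The remaining task is to identify the label: applying \lemref{lem:l-high} at $x_{\bv'^\bullet,\bw^\bullet}$ produces an $\ell$-highest weight vector in $L_{\bv'^\bullet}$ of weight $e^{\bv'^\bullet}e^{\bw^\bullet}$, whose $\ell$-dominance on nonempty strata is exactly \propref{prop:ldom}. Hence $L_{\bv'^\bullet} \cong L({Q}')$ with ${Q}' = e^{\bv'^\bullet}e^{\bw^\bullet}$, and reading off the coefficient of $[L({Q})]$ in $[M({P})]$ in the Grothendieck group yields \eqref{eq:compo}.

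The main obstacle is the interlocking of the two algebra actions: the decomposition theorem produces multiplicity spaces for the $\mathrm{Ext}$-/convolution algebra, while the theorem concerns composition factors over $W(\pi)$ itself. One must know that after specialization at $s$ the image of $\rho$ is large enough to keep the $L_{\bv'^\bullet}$ distinct and simple as $W(\pi)$-modules, and that their $\ell$-highest weights exhaust the $\ell$-dominant $\ell$-weights of $M({P})$. Both points are exactly the content transported from \cite[\S14.1--14.3]{Na-qaff} through \thmref{thm:gqv}; this transfer is the decisive ingredient and reduces the theorem to the already-known statement for graded quiver varieties of type~$A$, thereby connecting the multiplicity to Kazhdan--Lusztig polynomials via Arakawa's result.
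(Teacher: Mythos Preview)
Your proposal is correct and follows essentially the same route as the paper: invoke the BBD decomposition for $\pi$ on the graded piece, use Ginzburg's convolution-algebra formalism (\cite[\S8.6]{CG}) to get simple modules $L_{\bv'^\bullet}$ indexed by strata, and then transport the analysis from \cite[\S14]{Na-qaff} via \thmref{thm:gqv}. Two small remarks: in your displayed stalk formula the inclusion should be $i^!_{\bv_P^\bullet,\bw^\bullet}$ rather than $i^!_{\bv'^\bullet,\bw^\bullet}$ (you are evaluating at $x_P$); and where you say the obstacle is ``transported from \cite[\S14.1--14.3]{Na-qaff} through \thmref{thm:gqv},'' the paper is more specific---the simplicity of the $L_{\bv'^\bullet}$ as $W(\pi)$-modules (despite $\rho$ possibly not being surjective) is deduced from the cyclicity in \thmref{thm:Verma}(1), by the same mechanism as in the proof of \cite[Th.~14.3.2(3)]{Na-qaff}.
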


This is proved exactly as in \cite[Th.~14.3.10]{Na-qaff}. We use
Ginzburg's theory \cite[\S8.6]{CG} of simple modules of a convolution
algebra to analyze $H_*(Z(\bw^\bullet))$. Since the algebra
homomorphism $\rho$ in \thmref{thm:BFFR} may not be an isomorphism, it
is not {\it a priori\/} clear whether a simple
$H_*(Z(\bw^\bullet))$-module remains simple under the pull-back by
$\rho$. But this difficulty can be overcome by the use of the
cyclicity proved in \thmref{thm:Verma}, as in the proof of
\cite[Th.~14.3.2(3)]{Na-qaff}.

\subsection{KL polynomials}\label{sec:kl-polynomials}

As we noticed in \subsecref{subsec:gr} graded handsaw quiver varieties
are isomorphic to graded quiver varieties of type $A$. Using results
known on graded quiver varieties, we identify the right hand side of
\eqref{eq:compo} with the evaluation of a Kazhdan-Luszitg polynomial
at $1$ in this subsection. We give two proofs.

By \cite[Th.~14.3.10]{Na-qaff} the right hand side
of \eqref{eq:compo} is equal to the composition multiplicity for
finite dimensional representations of a quantum affine algebra
$\hat{\mathbf U}_\ve$ specialized at $q = \ve$, which is not a root of
unity:
\begin{equation*}
  [M^{\hat{\mathbf U}_\ve}(P'): L^{\hat{\mathbf U}_\ve}(Q')] 
  = \dim H^*(i_{\bv_P^\bullet,\bw^\bullet}^!
    IC(\Q^{\bullet\reg}(\bv_Q^{\bullet},\bw^\bullet))),
\end{equation*}
where $M^{\hat{\mathbf U}_\ve}(P')$ (resp.\ $L^{\hat{\mathbf
    U}_\ve}(Q')$) is a standard (resp.\ simple) module of
$\hat{\mathbf U}_\ve$. Here the Drinfeld polynomial $P'$ is given by
the formula
\begin{equation*}
  P'_j(u) = \prod_k (1 - \ve^{-k}u)^{\dim W_j'(k) + 
    \dim V'_{j+1}(k) + \dim V'_{j-1}(k) - \dim V'_{j}(k+1) - \dim V'_{j}(k-1)},
\end{equation*}
when $\Q^\bullet(\bv_P^\bullet,\bw^\bullet)$ is identified with
$\mathfrak M^\bullet(\bv^{\prime\bullet},\bw^{\prime\bullet})$ as in
\thmref{thm:gqv}. And the same applies for $Q'$.
\begin{NB}
We change the coefficient of $u$ from $\ve^k$ to $\ve^{-k}$, as
the stability condition is opposite in this paper.
\end{NB}%

If we fix an $\ell$-highest weight $e^{\bw^\bullet}$, we only have a
finitely many $\ell$-dominant $\ell$-weight in the Verma module, and
hence we can work on a quantum affine algebra ${\mathbf
U}_{\ve}(\widehat\algsl_M)$ for some $M$. However, if we only fix the
finite $W$-algebra $W(\pi)$, we cannot have a bound on $M$: if we have
a bound on $j-k$, but not on $j+k$ in \eqref{eq:trans}, both $j$ and
$k$ can arbitrary. Therefore the above $\hat{\mathbf U}_{\ve}$ should
be understood as ${\mathbf U}_{\ve}(\widehat\algsl_\infty)$, defined
as the limit of ${\mathbf U}_{\ve}(\widehat\algsl_M)$ as $M\to
\infty$.

By \cite{Varagnolo} this composition multiplicity is the same as that
for Yangian. And it is given by a Kazhdan-Lusztig polynomial thanks to
Arakawa's result \cite[Th.~16]{Arakawa}.
\begin{NB}
Our convention
\begin{equation*}
   P'_j(u) = \prod_n (1 - \ve^{-k} u)^{\dim W_j'(k)},
\end{equation*}
where $\ve$ is a nonzero complex number, which is not a root of
unity. We change the coefficient of $u$ from $\ve^k$ to $\ve^{-k}$, as
the stability condition is opposite in this paper.
We change it to
\begin{equation*}
   P'_j(u) = \prod_n (u + k)^{\dim W_j'(k)},
\end{equation*}
which is the Drinfeld polynomial for the Yangian.

Arakawa relates Drinfeld polynomials and $\lambda$, $\mu$ by the rule
\begin{equation*}
   P_j'(u) = \prod_{\substack{\alpha=1,\dots,N \\ \lambda_\alpha - \mu_\alpha = j}}
   (u - 2\lambda_\alpha + j + 1).
\end{equation*}
Therefore $k = -2\lambda_\alpha + j + 1$, i.e.,
\begin{equation*}
   \lambda_\alpha = \frac{j-k+1}2, \qquad
   \mu_\alpha = \frac{-j-k+1}2.
\end{equation*}
Comparing with \eqref{eq:trans}, we find
\begin{equation*}
   \lambda_\alpha = i, \qquad \mu_\alpha = - m.
\end{equation*}

This means that the multisegment consists of $[-m+1,i] = [(-j-k+3)/2, (j-k+1)/2]$. This has the length $i+m=j$.

\end{NB}%
Let us state this result in terms of $P$, $Q$. We assume all zeros of
Drinfeld polynomials of $P$ are integers. A general case is reduced to
this case as mentioned in \subsecref{subsec:gr}. We identify integral
weights of $\gl_N$ with elements in $\Z^N$. We define integral weights
$\lambda$, $\mu'$ of $\gl_N$ by
\begin{equation*}
  \begin{split}
  \lambda & \defeq (\underbrace{n,\dots,n}_{\text{$p_n$ times}},
  \underbrace{n-1,\dots,n-1}_{\text{$p_{n-1}$ times}},\dots, 
  \underbrace{1,\dots, 1}_{\text{$p_1$ times}}),
\\
  \mu' &\defeq
  ( -m_n^1,\dots -m_n^{p_n}, -m_{n-1}^1,\dots, -m_{n-1}^{p_{n-1}},\dots,
  -m_1^1,\dots, -m_1^{p_1}),
  \end{split}
\end{equation*}
where $m_i^1$, \dots, $m_i^{p_i}$ are zeros of $P_i(u)$. We take a
dominant weight $\mu$ so that $\mu' = w\mu$ for some $w\in S_N$. Note
that $w$ is well-defined in the double coset $S_\lambda\backslash
S_N/S_\mu$, where $S_\lambda$, $S_\mu$ are stabilizers of $\lambda$,
$\mu$ respectively. We define $\nu'$ from zeros of $Q$ as above. Then
Arakawa showed that $\nu' = x\mu$ for some $x\in S_N$ when the
composition multiplicity is nonzero. We let $x_{LR}$, $w_{LR}$ the
longest length representatives of $x$, $w$ in $S_\lambda\backslash
S_N/S_\mu$.
\begin{Theorem}
We have
  \begin{equation*}
    [M({P}): L({Q})] = P_{w_{LR},x_{LR}}(1),
  \end{equation*}
  where $P_{yz}(q)$ is the Kazhdan-Lusztig polynomial associated with
  $S_N$.
\end{Theorem}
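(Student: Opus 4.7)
The plan is to chain together the identifications already assembled in \subsecref{sec:kl-polynomials} and then invoke Arakawa's theorem to convert the resulting multiplicity into a value of a Kazhdan--Lusztig polynomial. By \subsecref{subsec:gr} the general case reduces to the integral one, so we may assume all zeros of the Drinfeld polynomials are integers. We start from \thmref{thm:main}, which writes $[M(P):L(Q)]$ as the dimension of a stalk of an $IC$ sheaf on a graded handsaw quiver variety. By \thmref{thm:gqv} this stalk is the same as the corresponding stalk on a graded quiver variety $\mathfrak M^\bullet(\bv^{\prime\bullet},\bw^{\prime\bullet})$ of type $A$, with graded dimensions related via \eqref{eq:trans}.

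Next I would apply \cite[Th.~14.3.10]{Na-qaff} to reinterpret that same number as a composition multiplicity $[M^{\hat{\mathbf U}_\ve}(P'):L^{\hat{\mathbf U}_\ve}(Q')]$ for a standard/simple module pair of a quantum affine algebra at generic $\ve$, with the explicit dictionary between $\bw^{\prime\bullet},\bv^{\prime\bullet}$ and the transformed Drinfeld polynomials $P',Q'$ recorded in \subsecref{sec:kl-polynomials}. As noted there, one cannot bound the rank $M$ once only the pyramid $\pi$ is fixed, so one must work inside $\mathbf U_\ve(\widehat{\algsl}_\infty)$, i.e.\ the inductive limit of $\mathbf U_\ve(\widehat{\algsl}_M)$ as $M\to\infty$; since only finitely many $\ell$-dominant $\ell$-weights appear in the Verma module, the desired multiplicity is computed in some fixed $\mathbf U_\ve(\widehat{\algsl}_M)$ with $M$ large enough. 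I then apply Varagnolo's result \cite{Varagnolo} to transfer the multiplicity from the quantum affine algebra to the corresponding Yangian, where composition multiplicities of standard modules are computed by a parabolic affine Kazhdan--Lusztig polynomial. Finally, Arakawa's theorem \cite[Th.~16]{Arakawa} rewrites this KL polynomial evaluated at $q=1$ as $P_{w_{LR},x_{LR}}(1)$ associated with the finite symmetric group $S_N$, for the pair $(\lambda,\mu)$ defined from the multiplicities of rows and from the zeros of the Drinfeld polynomials.

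The main technical step, and likely the one that hides the most obstacles, is the bookkeeping of the three successive parameter changes: from the $W$-algebra Drinfeld polynomials $(P_i(u),Q_i(u))$ one reads off $\bv^\bullet_P,\bv^\bullet_Q$ via the formula for $e^{\bv^\bullet}e^{\bw^\bullet}$ in \subsecref{subsec:gr}; the transform \eqref{eq:trans} then turns these into $(\bv^{\prime\bullet},\bw^{\prime\bullet})$ and in turn into Yangian Drinfeld polynomials $P'_j(u),Q'_j(u)$; finally Arakawa's setup identifies the zeros of $P'_j$, resp.\ $Q'_j$, with multisegments which, together with $\lambda$, determine the Weyl-group elements $w$, $x\in S_\lambda\backslash S_N/S_\mu$. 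Concretely one must check that the ansatz $\lambda=(n^{p_n},(n-1)^{p_{n-1}},\ldots,1^{p_1})$ together with $\mu'=(-m_n^1,\ldots,-m_1^{p_1})$ exactly reproduces the multisegment data attached to $P'$ in Arakawa's theorem; the analogous check for $Q'$ gives $\nu'=x\mu$. This amounts to verifying that under \eqref{eq:trans} a zero $m_i^a$ of $P_i(u)$ corresponds to the $\lambda$-entry $i$ paired with $-m_i^a$ in $\mu'$, i.e.\ that Arakawa's relation $P'_j(u)=\prod_{\lambda_\alpha-\mu_\alpha=j}(u-2\lambda_\alpha+j+1)$ matches the formula for $P'_j$ coming from \subsecref{sec:kl-polynomials}. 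Once this combinatorial translation is in place the theorem follows by concatenating the four equalities.

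The only remaining subtlety to address is the passage to longest double-coset representatives: Arakawa's result is stated for the specific representatives $w_{LR},x_{LR}$, and the Kazhdan--Lusztig polynomial is well-defined on the double coset $S_\lambda\backslash S_N/S_\mu$ only through this choice. So after identifying $(w,x)$ up to double cosets via the matching of multisegments, I would appeal to the well-known invariance of the relevant $P_{yz}(1)$ under the choice of longest representatives in $S_\lambda\backslash S_N/S_\mu$ to conclude. No other new argument is needed; the content of the theorem is entirely captured by this chain of reductions, and its proof is a careful verification of the compatibility of parametrizations.
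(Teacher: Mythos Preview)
Your proposal is correct and matches the paper's first argument essentially line for line: \thmref{thm:main} plus \thmref{thm:gqv} reduce to $IC$ stalks on graded quiver varieties of type $A$, then \cite[Th.~14.3.10]{Na-qaff} and \cite{Varagnolo} convert this to a Yangian composition multiplicity, and Arakawa \cite[Th.~16]{Arakawa} yields the Kazhdan--Lusztig polynomial; the parameter bookkeeping you spell out is exactly the one the paper records.

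It is worth noting that the paper then goes on to give a second, more direct proof that bypasses Arakawa altogether (and hence also bypasses the Kazhdan--Lusztig conjecture for $\algsl_N$ on which Arakawa's argument relies). In that alternative, \cite[Th.~8.4]{Na-quiver} identifies a type $A$ quiver variety with an intersection $\overline{O(\rho)}\cap S(\rho')$ of a nilpotent orbit closure with a Slodowy slice, and the $\C^*$-fixed locus becomes an orbit closure in the representation space of a type $A$ quiver, intersected with a transverse slice; the multisegments for $\rho^\bullet$ and $\rho^{\prime\bullet}$ are then read off from $(\lambda,\nu')$ and $(\lambda,\mu')$ via \eqref{eq:trans}. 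The $IC$ stalk dimension is identified with a Kazhdan--Lusztig polynomial directly by Zelevinsky \cite{Zelevinski}, and the match with $P_{w_{LR},x_{LR}}(1)$ is supplied by Henderson \cite{Henderson}. This second route is geometrically self-contained and in the special case $\bw=(1,\dots,1)$ even gives an independent proof of the Kazhdan--Lusztig conjecture for $\gl_N$, whereas your route (and the paper's first) imports that conjecture through Arakawa.
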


In Arakawa's result there are restrictions on $x$, $w$ coming from the
rank of the underlying Lie algebra $\algsl$. But as we take limit
explained as above, we do not impose such a restriction.

This result gives a new proof of a conjecture by Brundan-Kleshchev
\cite[Conj.~7.17]{BruKle}, which was proved by Losev \cite[Th.~4.1,
Th.~4.3]{Losev}, in a wider context, by a different method.

Arakawa's proof depends on a solution of Kazhdan-Lusztig conjecture for $\algsl_N$. Let us give another more direct proof. (It will give a new proof of Kazhdan-Lusztig conjecture, when $\bw = (1,1,\dots,1)$ and $W(\gl_N,e) = U(\gl_N)$.)

Let $O(\rho)$ be the nilpotent orbit corresponding to an $\algsl_2$-triple $\rho$. Let $S(\rho)$ denote Slodowy slice at
  $\rho\left[
  \begin{smallmatrix}
    0 & 0 \\ 1 & 0
  \end{smallmatrix}\right].$
By \cite[Th.~8.4]{Na-quiver}, a quiver variety $\mathfrak M(\bv',\bw')$ of type $A_\ell$ is an
intersection $\overline{O(\rho)}\cap S(\rho')$ where $\rho$ and
$\rho'$ are given by dimension vectors by the formula
  \begin{equation*}
    \begin{gathered}
    \mathbf u = (u_1,\dots,u_\ell) \defeq \bw' - {\mathbf C} \bv', \quad
    \rho \longleftrightarrow (1^{u_1} 2^{u_2}\cdots \ell^{u_\ell}(\ell+1)^{v'_\ell}),
\\
    \rho' \longleftrightarrow (1^{w'_1} 2^{w'_2}\cdots \ell^{w'_\ell}),
    \end{gathered}
  \end{equation*}
  where $\mathbf C$ is the Cartan matrix of type $A$, and we write partitions corresponding to $\rho$, $\rho'$.
\begin{NB}
  The formula in \cite[Th.~8.4]{Na-quiver} was wrong. $\mu$ must be
  replaced by its transpose.
\end{NB}

The $\C^*$-action on the quiver variety coincides with the standard
$\C^*$-action on the Slodowy slice, defined using the
$\algsl_2$-triple $\rho'$:
\begin{equation*}
    t^2 \operatorname{Ad}\left(\rho\left[\begin{matrix} t^{-1} & 0 \\ 0 & t\end{matrix}\right]
    	\right).
\end{equation*}
(See e.g., \cite[\S7.4]{Slodowy}.) And $\prod_j \GL(W_j')$ is
identified with the centralizer of $\rho'$. Then the graded quiver
variety $\mathfrak
M^\bullet_0(\bv^{\prime\bullet},\bw^{\prime\bullet})$ is identified
with the closure of an orbit $O(\rho^\bullet)$ of the space of
representations of a type $A$ quiver, intersected with a slice
$S(\rho^{\prime\bullet})$ to another orbit.
Those orbits are parametrized by {\it multisegments}, collections of
segments in $\Z$. A segment $[a, b]$ corresponds to an indecomposable
representation
\begin{equation*}
  \cdots \ \ 0 \ \ 0\ \ \C \to \C \to\cdots \to \C\ \  0\ \ 0\cdots, 
\end{equation*}
where $\C$ starts from the vertex $a$, and ends at the vertex $b$. For example, $\rho^{\prime\bullet}$ corresponds to the union of segments
 \begin{equation*}
   [\frac{-j-k+3}2, \frac{j-k+1}2] = [-m+1,i]
\end{equation*}
for each $W'_j(k) =  W_i(m)$. Thus the multisegment is given by $\lambda$, $\mu'$ defined above. The multisegment corresponding to $\rho^\bullet$ is similarly given by $\lambda$, $\nu'$.

Now the dimension of the intersection cohomology group in question
was given by a Kazhdan-Lusztig polynomial by \cite{Zelevinski}. It is known that the Kazhdan-Lusztig polynomial is equal to one given above, thanks to \cite{Henderson}.

\begin{NB}
Instead of \cite[Th.~8.4]{Na-quiver}, we can use \cite{GinzburgVasserot,Vasserot} to describe the
composition multiplicity for $\hat{\mathbf U}_{\ve}$. Then it is given by the intersection cohomology groups of the $\C^*$-fixed point locus of the nilpotent variety, that is the space for a type $A$ quiver as
above. The remaining part is the same.
\end{NB}

Another possible approach is to express the algorithm of \cite{MR2144973}
in terms of Young tableaux by a method in \cite{MR1988990}.
The author does not have ability to determine Weyl group elements
obtained in this approach. So he leaves this problem as an exercise
for the reader.

\subsection*{Acknowledgments}
The author thanks Shintarou Yanagida for his talk explaining
\cite{BFFR}, and
Tomoyuki Arakawa for discussions on $W$-algebras.

\bibliographystyle{myamsplain}
\bibliography{nakajima,mybib,handsaw}

\end{document}